\def\elsartstyle{%
    \def\normalsize{\@setfontsize\normalsize\@xiipt{14.5}}
    \def\small{\@setfontsize\small\@xipt{13.6}}
    \let\footnotesize=\small
    \def\large{\@setfontsize\large\@xivpt{18}}
    \def\Large{\@setfontsize\Large\@xviipt{22}}
    \skip\@mpfootins = 18\p@ \@plus 2\p@
    \normalsize
} %\@ifundefined{square}{}{\let\Box\square} \makeatother
\newtheorem{theorem}{Theorem}[section]
\newtheorem{lemma}{Lemma}[section]
\theoremstyle{definition}
\numberwithin{equation}{section}
\newtheorem{remark}[theorem]{Remark}
\newcommand{\RNum}[1]{\uppercase\expandafter{\romannumeral #1\relax}}
\journal{\textbf{~}}
\begin{document}

%MS++++++++++++++++++++++++++++++ Main Body Begin Here +++++++++++++++++++++++++++++
\title{ {\bf Finite horizon optimal control of reaction-diffusion SIV epidemic system with stochastic environment}}
%\author{Yanyan Du$^a$}{\ead{mduyanyan@163.com}}
%\footnote{$^*$ Author to whom any correspondence should be addressed.}}{\ead{wangruobing25@163.com}}
\author{Zong Wang$^*$
\footnote{$^*$ Author to whom any correspondence should be addressed.}}{\ead{ wangzong@qut.edu.cn}}
%\author{Ruobing Wang$^b$}
\address{ School of Science, Qingdao University of Technology, Qingdao, 266520, P.R. China}
%\address{$^b$  Department of Oncology, Affiliated Qingdao Central Hospital of Qingdao University, Qingdao Cancer Hospital, Qingdao, Shandong, China}
%\address{$^b$  Department of Mathematics, University of Florida, Gainesville, FL 32611, USA}
\linespread{1.5}

\begin{frontmatter}
\begin{abstract}
This contribution mainly focuses on the finite horizon optimal control problems of a susceptible-infected-vaccinated(SIV) epidemic system governed by reaction-diffusion equations and Markov switching. Stochastic dynamic programming is employed to find the optimal vaccination effort and economic return for a stochastic reaction diffusion SIV epidemic model. To achieve this, a key step is to show the existence and uniqueness of invariant measure for the model. Then, we obtained the necessary and sufficient conditions for the near-optimal control. Furthermore, we give an algorithm to approximate the Hamilton-Jacobi-Bellman (HJB) equation. Finally, some numerical simulations are presented to confirm our analytic results.% and to show the spatial heterogeneities are important factors for the global dynamics. %Numerical simulations are presented to display the theoretical results and verify the effect of treatment control on epidemic diseases. The near-optimal control is one problem in which the infected is higher at low cost.The near-optimal control is one problem in which the infected is higher at low cost.
\end{abstract}
\begin{keyword}
Stochastic SIV epidemic model; Reaction diffusion; Hamilton-Jacobi-Bellman equation; Invariant measure; Markov chain.
\end{keyword}
\end{frontmatter}

\section{Introduction}

Recent research has revealed that epidemic models incorporating reaction-diffusion mechanisms are increasingly being used to depict the dynamics of epidemics in spatially heterogeneous contexts \cite{kiyt,8,5}. In real-world scenarios, the spread of infectious diseases is profoundly influenced by spatial heterogeneity. In the field of epidemiology, there is a growing body of evidence indicating that spatial diffusion and individual mobility play crucial roles in the dissemination of infectious diseases \cite{81,82}. For example, Feng et al. \cite{81} explored the stability of a stochastic epidemic model that includes diffusion. Similarly, M. Seplveda and M. Bendahmane \cite{82} examined the convergence of a finite volume scheme for nonlocal reaction-diffusion epidemic systems. To our knowledge, studies focusing on optimal control problems in SIV epidemic models governed by partial differential equations are scarce. Hence, investigating the dynamics of epidemic models that incorporate reaction-diffusion is of exceptional importance.

Optimal control strategies are crucial for the study of different diseases issues, which can lead to the effective control of specific infectious disease \cite{3x1}. Mathematically, the optimal control is obtained by solving the state equation and the adjoint equation or the HJB equation \cite{5z1}.  It is well known that it is difficult to find the analytical solution to the HJB equation due to the partial differential equation. Therefore, it is necessary to find the successive approximation method for the finite horizon optimal control problem of the stochastic epidemic system.

To effectively curb the development of the infectious disease, vaccination and isolation of confirmed individuals are the most common control measures. As we all known, vaccination is one of the most effective prevention measure against many diseases such as measles, polio, pertussis and influenza \cite{8..,8.i,800}. For example, Wei et al. \cite{8..} studied the vaccine induces antibodies to H1N1 and prevents the spread of the disease. Utazi and Tatem \cite{8.i} give the precise mapping reveals gaps in global measles vaccination coverage. However, little research has been done on the control, vaccination and treatment of infectious diseases.

In this paper, we give the approximation method for the finite horizon optimal control problem of the stochastic epidemic system and we also give the necessary and sufficient conditions for the near optimal solution of SIV model were given from the perspective of the minimum cost of disease control and the minimum number of infected persons. The main contributions and challenges of this paper are as follows:

\begin{itemize}
\item It is more difficult to investigate the reaction-diffusion, Markov swithcing, stochastic SIV epidemic model because the corresponding the epidemic system involve Markov term $\Lambda_{t}$ in \eqref{1`q1}. To over this difficulty, the method of steps and the generalized Gronwall inequality are used simultaneously to derive the invarient measure criterion of the system.
%\item Provision of an innovative reaction-diffusion stochastic SIV epidemic model with  Markov switching, vaccination and saturated treatment;
\item Finite horizon optimal control and the algorithm to approximate the Hamilton-Jacobi-Bellman (HJB) equation for our novel  stochastic SIV model with  Markov switching, vaccination and saturated treatment are given;
%\item Compared to the existing results about stochastic SIV epidemic system, \cite{902,903}, the ones of our paper are more complete, more correct and more general.
\end{itemize}
%The spatial diffusion, environment noise, Markov chains and vaccination saturated cure are taken into a SIV epidemic system consideration in this paper. The objective of the control problem is to decrease the number of the infectious with minimum cost of disease control. We establish the sufficient as well as necessary conditions of near optimal control problems.

%In fact, in the real world, population is spread out in space and is constantly spreading. In addition, we know that the existence and uniqueness of invariant measures is one of the important properties of random population models with Markov switching and spatial diffusion. However, if we introduce diffusion into the stochastic infectious disease model, the corresponding Kolmogorov-Fokker-Planck (KFP) equation becomes more complex.

The remaining part of the paper is so organized as: \autoref{Section2} give the preliminaries and introduce of stochastic partial differential equation SIV epidemic model; \autoref{Section3} prove the existence and uniqueness of invariant measures; \autoref{Section4}, institution of sufficient conditions and necessary conditions for near-optimal control; \autoref{Section5} analyze the optimal control and numerical algorithm; Some numerical experiments are carried out in \autoref{Section6}; Finally, we make a brief conclusion in \autoref{Section7}.

\section{Model and Preliminaries}\label{Section2}
\subsection{Model}

A stochastic susceptible-infected-vaccinated(SIV) model was put forward by Zhang et al. \cite{zhang1} as follows:
\begin{equation}\label{u}
\begin{cases}
\begin{split}
{dS(t)}&=\left((1-p)b+\alpha  I(t)-(\mu+u_1(t))S(t)-\beta S(t) I(t)\right)dt-\sigma S(t) I(t)dB_{1}(t),\\
{dI(t)}&= \left(\beta S(t) I(t)  +(1-e)\beta V(t) I(t)-(\mu+\alpha )I(t)-\frac{mu_{2}(t)I(t)}{1+\eta I(t)}\right)dt\\
&+\sigma S(t) I(t) dB_{2}(t)+(1-e)\sigma V(t) I(t) dB_{4}(t),\\
{dV(t)}&= \left( p b+u_1(t) S(t)-\mu V(t)-(1-e)\beta V(t) I(t)+\frac{mu_{2}(t)I(t)}{1+\eta I(t)} \right)dt\\
&-(1-e)\sigma V(t) I(t) dB_{3}(t).
\end{split}
\end{cases}
\end{equation}
List of parameters, variables, and their meanings in model.\\
$p$:	 The proportion of population that get  vaccinated immediately after birth \\
$u_1(t)$:  Vaccinated rate; $u_2(t)$:  Treatment rate \\
$\beta$: Per capita transmission coefficient  \\
$\mu$: Per capita natural death rate	/ birth rate \\
$\alpha$: Per capita recovery rate; $e$:  The degree of effectiveness of vaccination    \\
$\sigma$: The intensities of the white noise\\
$S(t)$:   Susceptible proportion in the total population \\
$I(t)$: Infected proportion in the total population \\
$V(t)$: Vaccinated proportion in the total population.

%As we know, ODE models are relatively simple and easy to deal with, but they ignore spatial heterogeneity and movements of individuals, and of course cannot exhibit spatial patterns of the diseases. In addition, the outbreak predictions and the basic reproduction number can have severe departures from reality. To study the impact of spatial distribution and movements of individuals on the spread of diseases, Webb analyzed in [33] the classical Kermack-McKendrick model with diffusion and homogeneous Neumann boundary conditions on an interval. For partial differential equation (PDE) models, if we consider the factors including birth, death, migration, delay, spatio-temporal movements of individuals, infection age, spatial heterogeneity, and so on, the dynamical behaviors become much more complicated [2, 10, 19, 27, 34].

As we know, the factors including birth, death, and so on in epidemic systems are spatially heterogeneous \cite{82}. To study the impact of spatial distribution and movements of individuals on the spread of diseases, we need to established partial differential equation (PDE) SIV epidemic models, it can exhibit spatial patterns of the diseases \cite{3aa1}.  Let Laplacian operator $\Delta =\frac{\partial^{2}}{\partial S^{2}}+\frac{\partial^{2}}{\partial I^{2}}+\frac{\partial^{2}}{\partial V^{2}}$, the stochastic SIV model with spatial diffusion can present as
\begin{equation}\label{1`q2}
\begin{cases}
\begin{split}
dS(x,t)&=\bigg(D_{1}(x)\Delta S+(1-p(x))b(x)+\alpha(x) I(x,t)-(\mu(x)+u_1(x,t))S(x,t)\\
&-\beta(x) S(x,t) I(x,t)\bigg)dt-\sigma(x) S(x,t) I(x,t)dB_{1}(t),\\
dI(x,t)&= \bigg(D_{2}(x)\Delta I+\beta(x) S(x,t) I(x,t)  +(1-e)\beta(x) V(x,t) \\ &I(x,t)-(\mu(x)+\alpha(x))I(x,t)-\frac{m(x)u_{2}(x,t)I(x,t)}{1+\eta(x) I(x,t)}\bigg)dt\\
&+\sigma(x) S(x,t) I(x,t) dB_{2}(t)+(1-e)\sigma(x) V(x,t) I(x,t) dB_{4}(t),\\
dV(x,t)&= \bigg(D_{3}(x)\Delta V+ p(x) b(x)+u_1(x,t) S(x,t)-\mu(x)V(x,t) \\ &-(1-e)\beta(x) V(x,t) I(x,t)+\frac{m(x)u_{2}(x,t)I(x,t)}{1+\eta(x) I(x,t)} \bigg)dt\\
&-(1-e)\sigma(x) V(x,t) I(x,t) dB_{3}(t),
\end{split}
\end{cases}
\end{equation}
with $(x,t)\in \Gamma_{T}=(0,T)\times \Gamma$. The boundary conditions
\begin{equation}\label{3q}
\frac{\partial S(x,t)}{\partial \nu}=\frac{\partial I(x,t)}{\partial \nu}=\frac{\partial V(x,t)}{\partial \nu}=0,~~(x,t)\in \sum=(0,T)\times \partial \Gamma,
\end{equation}
where $\nu$ is the unit normal vector on $\partial \Gamma$. The initial boundary conditions  are
\begin{equation}\label{3w}
S(0,x)=S_{0}(x)>0,~~I(0,x)=I_{0}(x)>0,~~V(0,x)=V_{0}(x)>0, x\in \Gamma.
\end{equation}
where $m$ is the cure rate, $\eta$ in the treatment function measures the extent of delayed treatment given to infected people. Let $U \subseteq \mathbb{R}$ be a bounded nonempty closed set. The control $u(x,t)\in U$ is called admissible, if it is an $\mathcal{F}_{t}$-adapted process with values in $U$. The set of all admissible controls is denoted by $\mathcal{U}_{ad}$. In this optimal problem, we assume a restriction on the control variable such that $0 \leq u_{1}(x,t) \leq 1$, because it is impossible for all susceptible individuals to be vaccinated at one time. $u_{2}(x,t) = 0$ represents no treatment, and $u_{2}(x,t) = 1$ represents totally effective treatment.

%In this paper, we introduce Markov switching into the above model, and obtain the following model
Additionally, due to the sudden change of temperature, virus and other factors, the SIV epidemic system may experience abrupt changes in their parameters \cite{mhjk}. Continuous time Markov chain is widely used to characterize this kind of environmental noise in epidemic models \cite{mark1,mark2}. Therefore, it is reasonable to consider the SIV epidemic model with Markovian switching
\begin{equation}\label{1`q1}
\begin{cases}
\begin{split}
dS(x,t)&=\bigg(D_{1}(x)\Delta S(x,t)+(1-p(x,\Lambda_{t}))b(x,\Lambda_{t})+\alpha(x,\Lambda_{t}) I(x,t)-(\mu(x,\Lambda_{t})\\
&+u_1(x,t))S(x,t)-\beta(x,\Lambda_{t}) S(x,t) I(x,t)\bigg)dt-\sigma(x,\Lambda_{t}) S(x,t) I(x,t)dB_{1}(t),\\
dI(x,t)&= \bigg(D_{2}(x)\Delta I(x,t)+\beta(x,\Lambda_{t}) S(x,t) I(x,t)  +(1-e)\beta(x,\Lambda_{t}) V(x,t)I(x,t) \\ &-(\mu(x,\Lambda_{t})+\alpha(x,\Lambda_{t}))I(x,t)-\frac{m(x,\Lambda_{t})u_{2}(x,t)I(x,t)}{1+\eta(x,\Lambda_{t}) I(x,t)}\bigg)dt\\
&+\sigma(x,\Lambda_{t}) S(x,t) I(x,t) dB_{2}(t)+(1-e)\sigma(x,\Lambda_{t}) V(x,t) I(x,t) dB_{4}(t),\\
dV(x,t)&= \bigg(D_{3}(x)\Delta V(x,t)+ p(x,\Lambda_{t}) b(x,\Lambda_{t})+u_1(x,t) S(x,t)-\mu(x,\Lambda_{t})V(x,t) \\
&-(1-e)\beta(x,\Lambda_{t}) V(x,t) I(x,t)+\frac{m(x,\Lambda_{t})u_{2}(x,t)I(x,t)}{1+\eta(x,\Lambda_{t}) I(x,t)} \bigg)dt\\
&-(1-e)\sigma(x,\Lambda_{t}) V(x,t) I(x,t) dB_{3}(t).
\end{split}
\end{cases}
\end{equation}
To simplify equations, we represent $X(x,t):=(S(x,t), I(x,t), V(x,t))^{\top}$, $u(x,t):=(u_{1}(x,t), u_{2}(x,t))^{\top}$.% and the model \eqref{1`q1} can be rewritten as
%\begin{equation}\label{1`q1}
%\begin{cases}
%\begin{split}
%dS(x,t)&=\bigg(D_{1}(x)\Delta S(x,t)+(1-p(x,\Lambda_{t}))b(x,\Lambda_{t})+\alpha(x,\Lambda_{t}) I(x,t)-(\mu(x,\Lambda_{t})+u_1(x,t))S(x,t)\\
%&-\beta(x,\Lambda_{t}) S(x,t) I(x,t)\bigg)dt-\sigma(x,\Lambda_{t}) S(x,t) I(x,t)dB_{1}(x,t)\\
%&\equiv f_{1}(X(x,t),u(x,t))dt-\sigma_{14}(X(x,t))dB(t), \\
%dI(x,t)&= \bigg(D_{2}(x)\Delta I(x,t)+\beta(x,\Lambda_{t}) S(x,t) I(x,t)  +(1-e)\beta(x,\Lambda_{t}) V(x,t) I(x,t)\\ &-(\mu(x,\Lambda_{t})+\alpha(x,\Lambda_{t}))I(x,t)-\frac{m(x,\Lambda_{t})u_{2}(x,t)I(x,t)}{1+\eta(x,\Lambda_{t}) I(x,t)}\bigg)dt\\
%&+\sigma(x,\Lambda_{t}) S(x,t) I(x,t) dB_{2}(x,t)+(1-e)\sigma(x,\Lambda_{t}) V(x,t) I(x,t) dB_{4}(x,t)\\
%&\equiv f_{2}(X(x,t),u(x,t))dt-\sigma_{24}(X(x,t))dB(t), \\
%dV(x,t)&= \bigg(D_{3}(x)\Delta V(x,t)+ p(x,\Lambda_{t}) b(x,\Lambda_{t})+u_1(x,t) S(x,t)-\mu(x,\Lambda_{t})V(x,t) \\ &-(1-e)\beta(x,\Lambda_{t}) V(x,t) I(x,t)+u_{1}(x,t) S(x,t)\\
%&+\frac{m(x,\Lambda_{t})u_{2}(x,t)I(x,t)}{1+\eta(x,\Lambda_{t}) I(x,t)} \bigg)dt-(1-e)\sigma(x,\Lambda_{t}) V(x,t) I(x,t) dB_{3}(x,t)\\
%&\equiv f_{3}(X(x,t),u(x,t))dt-\sigma_{34}(X(x,t))dB(t), \\
%S(0,x)=&s_{1}(x)>0,~~I(0,x)=i_{1}(x)>0,~~V(0,x)=v_{1}(x)>0, x\in \Gamma.
%\end{split}
%\end{cases}
%\end{equation}
~The objective function
\begin{equation}
J(0, X(x,t); u(x,t))= \mathbb{E}\bigg(  \int_0^T\int_{\Gamma} L(x,t,X(x,t), u(x,t))dxdt  +\int_{\Gamma}h(X(x,T))dx   \bigg),
\end{equation}
where $X(x,t)=\{  X(x,t): 0 \leq t\leq T  \}$ is the solution of model \eqref{1`q1} on the filtered space $(\Omega, \mathcal{F}, (\mathcal{F}_t)_{0 \leq t \leq T}, \mathbb{P})$. For any $u(x,t) \in \mathcal{U}_{ad}$, model \eqref{1`q1} has a unique $\mathcal{F}_t$-adapted solution $X(x,t)$ and $(X(x,t), u(x,t))$ is called an admissible pair.
The control problem is to find an admissible control which minimizes or nearly minimizes the objective function $J(0,x_{0};u(\cdot))$ over all $u(\cdot)\in \mathcal{U}_{ad}$.  The admissible control set is defined as
\begin{equation}
\mathcal{U}_{ad}[0,T]=\{u:[0,T]\times \Omega \to \mathbb{R}^{3}|~u \mbox{~is~} \mathcal{F}\mbox{-adapted, ~and~} \mathbb{E}\int_{0}^{T}\int_{\Omega}|u|^{2}dxds<\infty \}.
\end{equation}
The objective function $J(0,x_{0};u(\cdot))$ represent a function that $t=0, X(x,t)=x_{0}$  with control variable $u(\cdot)$. The value function is as follows:
\begin{equation*}
V(0,x_{0})=\min_{u(\cdot)\in \mathcal{U}_{ad}}J(0,x_{0};u(\cdot)).
\end{equation*}
Our goal is to minimize the total number of the infected and susceptible individuals by using minimal control efforts. Let the objective function in this paper is
\begin{equation}\label{s1}
\begin{split}
&J(0, X(x,t); u(x,t))\\
=&\int_{0} ^{T}\int_{\Gamma}\left(A_{1}S(x,t)+A_{2}I(x,t)+\frac{1}{2}(\tau_{1} u^{2}_{1}(x,t)+\tau_{2} u^{2}_{2}(x,t))\right) \emph{dxdt}+\int_{\Gamma}h(X(x,T))dx,
\end{split}
\end{equation}
here, the positive constants $\tau_{1}$, $\tau_{2}$ and $A_{1}, A_{2}$ represent weights.

\begin{remark}In this paper, we will work on a specific cost function of the form
\begin{equation*}
\begin{split}
L(x,t,X(x,t); u(x,t))&=A_{1}S(x,t)+A_{2}I(x,t)+\frac{1}{2}(\tau_{1} u^{2}_{1}(x,t)+\tau_{2} u^{2}_{2}(x,t)),\\
h(X(x,T))&=(0,I(x,T),0),
\end{split}
\end{equation*}
where $A_{1},A_{2}$ and $\tau_{1},\tau_{2}$ are the weighting factors, representing the cost per unit time of the components $S(x,t), I(x,t), u_{1}^{2}, u_{2}^{2}$, respectively. In particular, $A_{2}I(x,t)$ is the cost that infected individuals creates for the society due to lost working hours and standard medical care, not including the vaccinated $u_{1}$ and treatment $u_{2}$, while $\tau_{1}u_{1}^{2}$ is the cost of vaccination and $\tau_{2} u^{2}_{2}$ is the cost of treating infected individuals. We assume that the cost is proportional to the number of infected individuals, and the cost per each patient depends quadratically on the treatment effort $u_{2}$, which means that it is marginally increasing in $u_{2}$.

\end{remark}
\subsection{ Preliminaries}

%where $\mathcal{L}:=(0,T)\times \Gamma$, $\Gamma$ is a bounded domain in $R^{3}$  with smooth boundary $\partial \Gamma$, $t\in (0,T)$; $S(x,t)$ denotes the population density  {at the location $x$ at time $t$ }. $I(x,t)$ is the concentration of toxicant in the organism at time $t$ and in spatial position $x$. The concentration of toxicant in the environment at the location $x$ at time $t$  {is} described by $V(x,t)$. $K(\Lambda_{t})$ is the net organismal uptake rate of toxicant from the environment at time $t$. $M(\Lambda_{t})$ is the total loss rate of the toxicant from the environment. $\mu(t,x,I(x,t),\Lambda_{t})$ denotes the decreasing rate function of the population {at time $t$} and in spatial position $x$. $k_{i}>0, i=1,2,3$ is the diffusion coefficient.  $\beta(t,x,I(x,t),\Lambda_{t})$ describes the intrinsic growth rate function of the population at time $t$ and in spatial position $x$. $u(x,t)$ denotes the exogenous total toxicant input into environment at time $t$ and in spatial position $x$. $l(\Lambda_{t}) $ is the net organismal excretion rate of toxicant and $m(\Lambda_{t})$ is depuration rate of toxicant due to metabolic process and other losses.
Let $K=H^{1}(\Gamma)\equiv \{ \varphi|\varphi \in L^{2}(\Gamma), \frac{\partial \varphi}{\partial x_{i}}\in L^{2}(\Gamma), i=1,2,3.\}$ Then $K'=H^{-1}(\Gamma)$ is  the space of $K$. We denote by $|\cdot|$ and $\|\cdot\|$ the norms in $K$ and $K'$, respectively, by $\langle \cdot, \cdot \rangle$ the duality product between $K, K'$ and by $(\cdot, \cdot)$ the scalar product in $K$.  Let $(\Omega,\mathcal{F}, \mathbb{P})$ be a complete probability space with $\{\mathcal{F}_{t}\}_{0\leq t \leq T}$ the natural filtration generated by the  Brownian motion $B(t)$, which means $\mathcal{F}_{t}=\sigma\{B(t);0\leq s\leq t\}$ augmented with all $\mathbb{P}$-null sets of $\mathcal{F}_{0}$.~%$U \subset R$ denotes a bounded nonempty closed set, random process $\{u(t), t \geq 0\}$ is progressively measurable with respect to the natural filtration $(\mathcal{F}_{t})_{0 \leq t \leq T}$, taking values from $U$.
 %Throughout the paper, Let $(V,\|\cdot\|)$ and $(H,|\cdot|)$ be two separable Hilbert {spaces}, with norm denoted by {$\|\cdot\|$ and $|\cdot|$, respectively}. $V$ is viewed as a subspace of $H$ with a continuous dense embedding. $V\Subset H$ represents the embedding is compact. $V'$ and $H'$ are the dual of $V$, $H$. We set $H_{3}:= H\times H \times H$. Let $(\Omega,\mathcal{F},\mathbb{P})$ be a complete probability space with $\{\mathcal{F}_{t}\}_{0\leq t\leq T}$ the natural filtration generated by the Brownian motion $W_{t}$, which means $\mathcal{F}_{t}=\sigma\{W_{s};0\leq s\leq t\}$ augmented with all {$\mathbb{P}$-null} sets of $\mathcal{F}_{0}$.  {To construct such a filtration, we denote by $\mathcal{N}$ the collection of $\mathbb{P}$-null sets, i.e. $\mathcal{N}=\{B\in\mathcal{F}:\mathbb{P}(B)=0\}$.
In the paper, $C>0$ represents different positive constants. $\Lambda_{t},$ $t>0$, be a right-continuous Markov chain on the probability space taking values in a finite state $\mathbb{S}=\{1,2,\ldots,N\}$ for some positive integer $N<\infty$.
In order to prove the existence and uniqueness of  the underlying invariant measure, we prepare the following lemma.
\begin{lemma}\label{lko085}$($\cite{3}$)$
Let $ N<\infty $ and assume further that $\sum_{i=1}^{N}\mu_{i}\rho_{i}<0,$
where $\mu_{i}$ is the stationary distribution of Markov chain $\{\Lambda_{t}\}_{t\geq 0}$. Then $(1)$ $\eta_{p}>0$ if $\max\limits_{i\in \mathbb{S}}\rho_{i}\leq0$;~
$(2)$ $\eta_{p}>0$ for $p<\max\limits_{i\in \mathbb{S},\rho_{i}>0}\{-2q_{ii}/\rho_{i}\}$ if $\max\limits_{i\in \mathbb{S}}\rho_{i}>0$.
\end{lemma}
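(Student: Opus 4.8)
The plan is to identify $\eta_p$ with (minus) the spectral abscissa of an explicit essentially nonnegative matrix and then run a Perron--Frobenius / $M$-matrix argument. Write $Q=(q_{ij})_{N\times N}$ for the generator of $\{\Lambda_t\}_{t\ge0}$ and put
\begin{equation*}
M(p)=Q+\tfrac{p}{2}\,\mathrm{diag}(\rho_1,\dots,\rho_N),
\end{equation*}
so that, in the notation of \cite{3}, $\eta_p=-\lambda(p)$ with $\lambda(p)$ the eigenvalue of $M(p)$ of largest real part. Because $Q$ has nonnegative off-diagonal entries and is irreducible (the chain is ergodic, with stationary law $\mu=(\mu_1,\dots,\mu_N)$), $M(p)$ is an irreducible Metzler matrix; by Perron--Frobenius theory $\lambda(p)$ is real and simple, carries strictly positive left and right eigenvectors, and is analytic in $p$. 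Proving $\eta_p>0$ is thus equivalent to showing that $-M(p)$ is a nonsingular $M$-matrix, i.e. $\lambda(p)<0$.

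First I would analyse $p=0$. Since $Q\mathbf 1=0$ and $\mu Q=0$, we have $\lambda(0)=0$ with right eigenvector $\mathbf 1=(1,\dots,1)^{\top}$ and left eigenvector $\mu$. Differentiating $M(p)\xi(p)=\lambda(p)\xi(p)$ and pairing with $\mu$ gives the first-order perturbation
\begin{equation*}
\lambda'(0)=\frac{\mu\big(\tfrac12\mathrm{diag}(\rho_i)\big)\mathbf 1}{\mu\mathbf 1}=\frac12\sum_{i=1}^{N}\mu_i\rho_i<0 ,
\end{equation*}
by hypothesis. Hence $\lambda(p)<0$, i.e. $\eta_p>0$, for all sufficiently small $p>0$. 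I would then invoke convexity of the spectral abscissa of a Metzler matrix in its diagonal entries, so that $p\mapsto\lambda(p)$ is convex; together with $\lambda(0)=0$ and $\lambda'(0)<0$ this confines $\lambda$ to be negative on a maximal interval $(0,p^{*})$ and to have at most one further zero $p^{*}\in(0,\infty]$.

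For part (1), when $\max_i\rho_i\le0$ the row-sum identity $M(p)\mathbf 1=\tfrac{p}{2}(\rho_1,\dots,\rho_N)^{\top}$ (valid because the off-diagonal entries of row $i$ of $Q$ sum to $-q_{ii}$) shows $M(p)\mathbf 1\le0$ componentwise; and $\sum_i\mu_i\rho_i<0$ forces at least one $\rho_i<0$, so this inequality is not identically zero. The constant vector $\mathbf 1$ is therefore a strictly positive subsolution, and the $M$-matrix criterion together with irreducibility yields $\lambda(p)<0$ for every $p>0$, equivalently $p^{*}=\infty$. This settles part (1).

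Part (2) is where the work lies. When some $\rho_i>0$ the vector $\mathbf 1$ no longer serves, and $\lambda(p)$ does eventually turn positive, so the task reduces to bounding the second zero $p^{*}$ from below by the stated threshold. The diagonal entry $q_{ii}+\tfrac{p}{2}\rho_i$ of $M(p)$ is negative exactly when $p<-2q_{ii}/\rho_i$, which is precisely how the quantity $-2q_{ii}/\rho_i$ arises; the strategy is to construct a strictly positive vector $v$ with $M(p)v<0$ for $p$ in the asserted range, using the sign of the diagonal entries and the row-sum structure. The main obstacle is exactly this quantitative estimate: negativity of the diagonal entries by itself does not force $\lambda(p)<0$ (off-diagonal coupling can push an eigenvalue positive earlier), so the construction of $v$ must be combined with the convexity of $\lambda$ and the perturbation data at $p=0$ to certify $\eta_p>0$ throughout $p<\max_{i\in\mathbb{S},\,\rho_i>0}\{-2q_{ii}/\rho_i\}$. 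I expect this threshold computation, rather than the qualitative spectral facts, to be the delicate step.
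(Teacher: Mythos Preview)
The paper does not supply its own proof of this lemma; it is quoted from an external reference and invoked without proof as a preparatory tool for the invariant-measure theorem. There is therefore no in-paper argument against which to compare your proposal.

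That said, your spectral/Perron--Frobenius setup is the standard route to results of this type, and your treatment of part~(1) is essentially complete: the row-sum bound for an irreducible Metzler matrix, together with the hypothesis $\sum_i\mu_i\rho_i<0$ (which forces at least one strictly negative row sum of $M(p)$), does yield $\lambda(p)<0$ for every $p>0$. Part~(2), by contrast, remains a genuine gap in your outline. You correctly observe that negativity of the diagonal entries of $M(p)$ does not by itself bound the spectral abscissa, and convexity of $p\mapsto\lambda(p)$ together with $\lambda(0)=0$ and $\lambda'(0)<0$ tells you only that a positive zero $p^{*}$, if one exists, is unique --- not where it lies. Nothing concrete in your proposal connects $p^{*}$ to the quantities $-2q_{ii}/\rho_i$, so the threshold asserted in the lemma is not yet established. (Incidentally, the paper itself later works with $p_0=1\wedge\min_{i:\rho_i>0}\{-2q_{ii}/\rho_i\}$, with $\min$ rather than the $\max$ printed in the lemma, which is worth keeping in mind when you try to pin down the sharp bound.)
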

{The generator of $\{\Lambda_{t}\}_{t>0}$ is specified by $Q=(q_{ij})_{N\times N}$, such that for a sufficiently small $\Delta$,}\\
\begin{equation}\label{bgth876}
\mathbb P(\Lambda_{t+\Delta}=j|\Lambda_{t}=i)=
\begin{cases}
q_{ij}\Delta+o(\Delta), &\quad i\neq j,\\
1+q_{ii}\Delta+o(\Delta), &\quad i=j,
\end{cases}
\end{equation}
where $\Delta >0$, $o(\Delta)$ satisfies $\lim_{\Delta\rightarrow 0}\frac{o(\Delta)}{\Delta}=0$. {Here $q_{ij}$ is the transition rate from $i$ to $j$ satisfying $q_{ii}=-\sum\limits_{i\neq j}q_{ij}$.}  We assume that the Markov chain $\{\Lambda_{t}\}$ defined on the probability space above is independent of the standard Brownian motion $\{B(t)\}_{t\geq 0}$ and the {$Q$ matrix} is irreducible and conservative. Therefore, the Markov chain $\{\Lambda_{t}\}_{t\geq 0}$ has a unique stationary distribution $\pi:= (\pi_{1},\ldots,\pi_{N})$ which can be determined by solving the linear equation
\begin{equation*}
\pi Q=\bf{0} \quad \quad {\mbox{subject~to}}\quad \sum_{i=1}^{N}\pi_{i}=1 ~ \mbox{with}~\pi_{i}>0.
\end{equation*}
 Let $\mathcal{P}(H_{3}\times\mathbb{S})$ stand for the family of all probability measures on $H_{3}\times\mathbb{S}$. For $\xi=(\xi_{1},\xi_{2},\xi_{3})^{\ast} \in H_{3}$, $\xi\gg \bf{0}$ means each component $\xi_{i}>0$ , $i=1,2,3$.

We replace $((S(x,t), I(x,t), V(x,t)),\Lambda_{t})$ with $((S(x,t)^{s_{1},i}, I(x,t)^{i_{1},i}, V(x,t)^{v_{1},i}),\Lambda_{t}^{i})$,
especially the initial value $$((S^{0},I^{0},V^{0}), \Lambda_{0})=((s_{1},i_{1},v_{1}),i).$$ For any $p\in (0,1]$, we set $s:=(s_{1},i_{1},v_{1})$ and  {define} a metric on $ H_{3}\times\mathbb{S}$ as follows
\begin{equation*}
\begin{split}
d_{p}((s,i),(\bar{s},i)):=\int_{H_{3}}\sum_{k=1}^{3}|s_{k}-\bar{s}_{k}|^{p}+\chi_{\{i\neq j\}}, \quad (s,i),(\bar{s},i)\in H_{3}\times\mathbb{S},
\end{split}
\end{equation*}
where $\chi_{A}$ denotes the indicator function of the set $A$, and $\bar{s}:=(\bar{s}_{1}, \bar{i}_{1}, \bar{v}_{1})$ is different initial  {value}. For $p\in (0,1]$, we define the Wassertein distance between $\nu\in\mathcal{P}(H_{3}\times\mathbb{S})$ and $\nu'\in\mathcal{P}(H_{3}\times\mathbb{S})$ by
$$W_{p}(\nu,\nu')=\inf\mathbb Ed_{p}(X_{k},X_{k'}),$$ where the infimum is taken over all pairs of random variables $X_{k}$, $X_{k'}$ on $ H_{3} \times\mathbb{S}$ with respective laws $\nu$, $\nu'$. Let $\mathbb{P}_{t}((s_{1},i_{1},v_{1}),i;\cdot)$ be the transition probability kernel of the pair $((S(x,t)^{s_{1},i}, I(x,t)^{i_{1},i}, V(x,t)^{v_{1},i}),\Lambda_{t}^{i})$, a time homogeneous Markov process (see \cite{3}). Recall that $\pi\in \mathcal{P}(H_{3}\times\mathbb{S})$ is called an invariant measure of $((S(x,t)^{s_{1},i}, I(x,t)^{i_{1},i}, V(x,t)^{v_{1},i}),\Lambda_{t}^{i})$ if
\begin{equation}\label{123c}
\pi(A\times \{i\})= \sum_{j=1}^{N}\int_{H_{3}} \mathbb{P}_{t}((s_{1},i_{1},v_{1}),j; A\times \{i\})\pi(d(s_{1},i_{1},v_{1}) \times \{j\}), t\geq 0,  A \in H_{3}, i\in \mathbb{S},
\end{equation}
holds. For any $p>0$, let
\begin{equation}\label{567mos}
{\mbox{diag}}(\rho)\triangleq {\mbox{diag}}(\rho_{1},\ldots,\rho_{N}), \quad Q_{p}\triangleq Q+\frac{p}{2}{\mbox{diag}}(\rho),\quad \eta_{p}\triangleq-\max\limits_{\gamma}Re\gamma,
\end{equation}
where $\rho_{i}$ is introduced in the  {assumptions} and $\gamma \in \mbox{spec}(Q_{p})$, $\mbox{spec}(Q_{p})$ denotes the spectrum of $Q_{p}$(i.e., the multi-set of its eigenvalues). $Re\gamma$ is the real part of $\gamma$ and  ${\mbox{diag}}(\rho_{1},\ldots,\rho_{N})$ denotes  {the diagonal matrix whose diagonal entries are }$\rho_{1},\ldots,\rho_{N}$, respectively.

\section{Existence and uniqueness of invariant measures}\label{Section3}
One of the most important problems in applying stochastic analysis is the question of the existence and uniqueness of invariant measures for specific processes. In this section, we mainly prove the existence and uniqueness of the invariant measure for the exact solution. Thus one can expect that processes with invariant measures exhibit some kind of stability. If \autoref{lko085} hold, the system (\ref{1`q1}) is said to be attractive ``in average ".
\begin{theorem}\label{mmy6}
Let $N< \infty$ and $\max_{i\in S}\rho_{i}>0$, then
the exact solution of system \eqref{1`q1} admits a unique invariant measure $\pi\in \mathcal{P}(H_{3}\times\mathbb{S})$.
\end{theorem}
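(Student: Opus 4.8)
The plan is to obtain existence and uniqueness \emph{simultaneously} by proving that the transition semigroup is a strict contraction in the Wasserstein metric $W_{p}$, and then invoking the completeness of $(\mathcal{P}(H_{3}\times\mathbb{S}),W_{p})$ together with the strict positivity of the spectral gap $\eta_{p}$ furnished by \autoref{lko085}. The whole argument hinges on choosing $p\in(0,1]$ small enough that $\eta_{p}>0$; this is exactly case $(2)$ of \autoref{lko085}, which is available here precisely because the standing hypothesis is $\max_{i\in\mathbb{S}}\rho_{i}>0$.

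First I would dispose of well-posedness. For each initial datum $((s_{1},i_{1},v_{1}),i)\in H_{3}\times\mathbb{S}$, I would show that \eqref{1`q1} has a unique global solution that stays in the positive cone, so that the pair $((S^{s_{1},i},I^{i_{1},i},V^{v_{1},i}),\Lambda_{t}^{i})$ is a time-homogeneous Markov--Feller process with kernel $\mathbb{P}_{t}$. Because of the switching term $\Lambda_{t}$, the construction proceeds by the \emph{method of steps} across the successive sojourn intervals of the chain: on each such interval the equation is an ordinary reaction--diffusion SPDE with frozen coefficients, and the pieces are glued at the jump times. Along the way the generalized Gronwall inequality yields a uniform moment bound $\sup_{t\geq0}\mathbb{E}\,|X_{t}|^{p}<\infty$, which will be needed to control the coupling later.

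The core is the contraction step. Fixing $p$ as above, I would run two copies $X^{s}_{t}$ and $X^{\bar s}_{t}$ of the system driven by the \emph{same} Brownian motions $B_{1},\dots,B_{4}$ and the \emph{same} chain $\Lambda_{t}$, but issued from $(s,i)$ and $(\bar s,i)$. Applying the generalized It\^o formula for switching diffusions to a regime-weighted functional $\theta_{\Lambda_{t}}\sum_{k}|S_{k}-\bar S_{k}|^{p}$, with positive weights $(\theta_{1},\dots,\theta_{N})$ taken from the Perron eigenvector associated with the leading eigenvalue $-\eta_{p}=\max_{\gamma}\mathrm{Re}\,\gamma\in\mathrm{spec}(Q_{p})$ (legitimate since $Q_{p}=Q+\tfrac{p}{2}\mathrm{diag}(\rho)$ is a Metzler matrix), the Laplacian terms contribute a nonpositive Dirichlet energy after integration by parts under the Neumann conditions \eqref{3q}, the nonlinear reaction terms are absorbed by the one-sided dissipativity bounds encoded in the $\rho_{i}$, and the coupling through $Q$ manufactures the matrix $Q_{p}$ whose spectral abscissa is $-\eta_{p}$. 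This produces
\[
\mathbb{E}\,d_{p}\big((X^{s}_{t},\Lambda_{t}),(X^{\bar s}_{t},\Lambda_{t})\big)\leq C\,e^{-\eta_{p}t}\,d_{p}\big((s,i),(\bar s,i)\big),\qquad t\geq0,
\]
so that $W_{p}(\mathbb{P}_{t}(s,i;\cdot),\mathbb{P}_{t}(\bar s,i;\cdot))\to0$ geometrically.

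From here the conclusion follows by soft arguments. Using the semigroup identity $\mathbb{P}_{t+r}=\mathbb{P}_{r}\mathbb{P}_{t}$ together with the contraction and the uniform moment bound, I would show that $t\mapsto\mathbb{P}_{t}(s,i;\cdot)$ is $W_{p}$-Cauchy, hence converges to a limit $\pi\in\mathcal{P}(H_{3}\times\mathbb{S})$ that, again by the contraction, is independent of $(s,i)$; the Feller property then lets me pass to the limit in the Chapman--Kolmogorov relation and identify $\pi$ as invariant in the sense of \eqref{123c}. Uniqueness is immediate, since any two invariant measures satisfy $W_{p}(\pi,\pi')=W_{p}(\pi\mathbb{P}_{t},\pi'\mathbb{P}_{t})\leq Ce^{-\eta_{p}t}W_{p}(\pi,\pi')\to0$. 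I expect the contraction estimate to be the principal obstacle: the delicate points are the singular-at-the-origin It\^o formula for the fractional power $|\cdot|^{p}$ with $p\leq1$ (handled by approximation, but exactly what makes the estimate close when the drift is not $L^{2}$-dissipative), the careful bookkeeping of the switching through $Q$ needed to make the spectral constant $\eta_{p}$ appear, and the control of the saturated-treatment nonlinearity $\tfrac{m u_{2}I}{1+\eta I}$ and the bilinear incidence terms in the infinite-dimensional difference estimate.
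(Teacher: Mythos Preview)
Your proposal shares the paper's key technical input---the Perron eigenvector of $Q_{p}$ used as regime-dependent weights in an It\^o computation to produce the rate $\eta_{p}$---but the overall organization is different. The paper separates the two halves: \emph{existence} is obtained from a uniform moment bound $\sup_{t\geq0}\mathbb{E}|X_{t}|^{p}\leq C$ alone, by forming the time-averaged measures $\chi_{t}(A)=t^{-1}\int_{0}^{t}\mathbb{P}_{\epsilon}(s,i;A)\,d\epsilon$, showing their tightness via Chebyshev and the compact embedding, and invoking Krylov--Bogoliubov. Only afterwards does the paper run the two-solution contraction argument to prove \emph{uniqueness}. Your unified Cauchy-in-$W_{p}$ route is cleaner, but it demands strictly more: it needs the full $W_{p}$-contraction before existence can be concluded, whereas the paper's existence half needs only moments.

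There is, however, a genuine gap in your contraction step. You couple two copies from $(s,i)$ and $(\bar s,i)$ with the \emph{same} initial chain state and the same $\Lambda_{t}$, and this indeed gives
\[
\mathbb{E}\,d_{p}\big((X^{s}_{t},\Lambda_{t}),(X^{\bar s}_{t},\Lambda_{t})\big)\leq C\,e^{-\eta_{p}t}\,d_{p}\big((s,i),(\bar s,i)\big).
\]
But both your Cauchy argument for existence and your uniqueness inequality $W_{p}(\pi,\pi')=W_{p}(\pi\mathbb{P}_{t},\pi'\mathbb{P}_{t})$ require comparing $\mathbb{P}_{t}((s,i);\cdot)$ with $\mathbb{P}_{t}((\bar s,j);\cdot)$ for $i\neq j$: after disintegrating $\pi,\pi'$ over $\mathbb{S}$, or after writing $\mathbb{P}_{t+r}=\mathbb{P}_{r}\mathbb{P}_{t}$, the intermediate chain state need not equal $i$. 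Synchronous coupling of $\Lambda^{i}$ and $\Lambda^{j}$ does not make them coincide, so the same-chain estimate does not apply directly. The paper fills this gap explicitly: it couples $\Lambda^{i},\Lambda^{j}$ so that the coalescence time $\tau=\inf\{t\geq0:\Lambda_{t}^{i}=\Lambda_{t}^{j}\}$ satisfies $\mathbb{P}(\tau>t)\leq e^{-\theta t}$ by irreducibility on the finite set $\mathbb{S}$, then splits the difference into $\{\tau>t/2\}$ (controlled by H\"older and the uniform $pq$-moments) and $\{\tau\leq t/2\}$ (controlled, via the strong Markov property, by the same-state contraction applied from time $\tau$ onward). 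This yields the rate $\sigma^{*}=\min\{(q-1)\theta/(2q),\eta_{p}/2,\theta\}$, in general strictly smaller than $\eta_{p}$. Without this coupling-of-chains step your final inequality is unjustified; once you add it, your route goes through and is otherwise equivalent to the paper's.
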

\begin{proof}
The key point of proof is to divide the whole proof into two parts of existence and uniqueness.

$(\RNum{1})$   {Existence} of invariant measure.  Let $((S^{s_{1},i}_{t},I^{i_{1},i}_{t},V^{v_{1},i}_{t}),\Lambda_{t}^{i})$ be the exact solution of system \eqref{1`q1} with $((s_{1},i_{1},v_{1}),i)$ as initial value, where $((s_{1},i_{1},v_{1}),i)\in H_{3}\times \mathbb{S}$.
 A simple application of the Feynman-Kac formula show that
 let $Q_{p,t}=e^{tQ_{p}}$, where $Q_{p}$ is given in \eqref{567mos}. Then, the spectral radius Ria$(Q_{p,t})$ (i.e.,Ria$(Q_{p,t})=\sup_{\lambda\in \mbox{spec}(Q_{p,t})}|\lambda|$) of $Q_{p,t}$  equals to $e^{-\eta_{p}t}.$ Since all {coefficients} of $Q_{p,t}$ are positive, by the Perron-Frobenius theorem  yields that $-\eta_{p}$ is a simple eigenvalue of $Q_{p}$, all other eigenvalues  {have} a strictly smaller real part. Note that the eigenvector of $Q_{p,t}$ corresponding to $e^{-\eta_{p}t}$ is also an eigenvector of $Q_{p}$ corresponding to $-\eta_{p}$. According to Perron-Frobenius theorem, for $Q_{p}$ it can be found that there is a positive eigenvector $\xi^{(p)}=(\xi_{1}^{(p)},\ldots,\xi_{N}^{(p)})\gg \bf{0} $ (where $\bf{0}\in H$ is a zero vector) corresponding to the eigenvalue $-\eta_{p}$, and  $\xi^{(p)}\gg \bf{0} $ means that each component $\xi_{i}^{(p)}>0$. Let
\begin{equation}\label{78y5r}
p_{0}=1\wedge \min\limits_{i\in\mathbb{S},\rho_{i}>0}\{-2q_{ii}/\rho_{i}\},
\end{equation}
 {where $1\wedge \min\limits_{i\in\mathbb{S},\rho_{i}>0}\{-2q_{ii}/\rho_{i}\}:=\min\{1,\min\limits_{i\in\mathbb{S},\rho_{i}>0}\{-2q_{ii}/\rho_{i}\}\}$}. Combined with \autoref{lko085}, we can get
\begin{equation}\label{12345frg}
Q_{p}\xi_{i}^{(p)}=-\eta_{p}\xi_{i}^{(p)}\ll \bf{0}.
\end{equation}
 {In order to investigate the existence and uniqueness of invariant measure for exact solution, we need to prove the boundedness of exact solution for system (\ref{1`q1}).} In other words, we need to prove whether the following inequality holds
$$\mathbb{E}(1+|S_{t}^{s_{1},i}|^{p}+|I_{t}^{i_{1},i}|^{p}+|V_{t}^{v_{1},i}|^{p})\leq C.$$
First, using the It$\hat{\mbox{o}}$ formula, we have
\begin{equation*}
\begin{split}
&e^{\eta_{p}t}\mathbb{E}((1+|S_{t}^{s_{1},i}|^{2}+|I_{t}^{i_{1},i}|^{2}
+|V_{t}^{v_{1},i}|^{2})^{p/2}\xi_{\Lambda_{t}^{i}}^{(p)})\\
=&(1+|s_{1}|^{2}+|i_{1}|^{2}+|v_{1}|^{2})^{\frac{p}{2}}\xi_{i}^{p}+\mathbb{E}
\int_{0}^{t}e^{\eta_{p}\epsilon}(1+|S_{\epsilon}^{s_{1},i}|^{2}
+|I_{\epsilon}^{i_{1},i}|^{2}+|V_{\epsilon}^{v_{1},i}|^{2})^{\frac{p}{2}}
\bigg\{\eta_{p}\xi_{\Lambda_{\epsilon}^{i}}^{(p)}
+(Q\xi^{(p)})(\Lambda_{\epsilon}^{i})\bigg\}d\epsilon\\
&+\frac{p}{2}\mathbb{E}\int_{0}^{t}e^{\eta_{p}\epsilon}(1+|S_{\epsilon}^{s_{1},i}|^{2}+|I_{\epsilon}^{i_{1},i}|^{2}
+|V_{\epsilon}^{v_{1},i}|^{2})^{\frac{p}{2}-1}\xi_{\Lambda_{\epsilon}^{i}}^{(p)}\bigg\{2\langle S_{\epsilon}^{s_{1},i}, D_{1}(x)\Delta S_{\epsilon}^{s_{1},i}+(1-p(\Lambda_{\epsilon}^{i}))b(\Lambda_{\epsilon}^{i})
+\alpha(\Lambda_{\epsilon}^{i}) I_{\epsilon}^{i_{1},i}\\
&-(\mu(\Lambda_{\epsilon}^{i})+u_1(x,t))S_{\epsilon}^{s_{1},i}-\beta(\Lambda_{\epsilon}^{i}) S_{\epsilon}^{s_{1},i} I_{\epsilon}^{i_{1},i}\rangle+
2 \langle I_{\epsilon}^{i_{1},i}, D_{2}(x)\Delta I_{\epsilon}^{i_{1},i}+\beta(\Lambda_{\epsilon}^{i}) S_{\epsilon}^{s_{1},i} I_{\epsilon}^{i_{1},i}  +(1-e)\beta(\Lambda_{\epsilon}^{i}) V_{\epsilon}^{v_{1},i} I_{\epsilon}^{i_{1},i}\\
&-(\mu(\Lambda_{\epsilon}^{i})+\alpha(\Lambda_{\epsilon}^{i}))I_{\epsilon}^{i_{1},i}
-\frac{m(\Lambda_{\epsilon}^{i})u_{2}(x,t)I_{\epsilon}^{i_{1},i}}{1+\eta(\Lambda_{\epsilon}^{i}) I_{\epsilon}^{i_{1},i}}\rangle+
2 \langle V_{\epsilon}^{v_{1},i}, D_{3}(x)\Delta V_{\epsilon}^{v_{1},i}+ p (\Lambda_{\epsilon}^{i})b(\Lambda_{t})+u_1(x,t) S_{\epsilon}^{s_{1},i}-\mu(\Lambda_{\epsilon}^{i}) V_{\epsilon}^{v_{1},i} \\
&-(1-e)\beta(\Lambda_{\epsilon}^{i}) V_{\epsilon}^{v_{1},i} I_{\epsilon}^{i_{1},i}+u_{1}(x,t) S_{\epsilon}^{s_{1},i}+\frac{m(\Lambda_{\epsilon}^{i})u_{2}(x,t)I_{\epsilon}^{i_{1},i}}{1
+\eta(\Lambda_{\epsilon}^{i}) I_{\epsilon}^{i_{1},i}} \rangle \bigg\}d\epsilon+\frac{p}{2}\int_{0}^{t}e^{\eta_{p}\epsilon}\mathbb{E}(1+|S_{\epsilon}^{s_{1},i}|^{2}
+|I_{\epsilon}^{i_{1},i}|^{2}+|V_{\epsilon}^{v_{1},i}|^{2})^{\frac{p}{2}-1}\\
&\times\xi_{\Lambda_{\epsilon}^{i}}^{(p)}\bigg\{(p-2)(1+|S_{\epsilon}^{s_{1},i}|^{2}
+|I_{\epsilon}^{i_{1},i}|^{2}+|V_{\epsilon}^{v_{1},i}|^{2})^{-1}
\bigg(\|S_{\epsilon}^{s_{1},i} \sigma(\Lambda_{\epsilon}^{i}) S_{\epsilon}^{s_{1},i} I_{\epsilon}^{i_{1},i}\|^{2}+\|I_{\epsilon}^{i_{1},i}(\sigma(\Lambda_{\epsilon}^{i}) S_{\epsilon}^{s_{1},i} I_{\epsilon}^{i_{1},i} \\
&+(1-e)\sigma(\Lambda_{\epsilon}^{i}) V_{\epsilon}^{v_{1},i} I_{\epsilon}^{i_{1},i} \|^{2}
+\|V_{\epsilon}^{v_{1},i}(1-e)\sigma(\Lambda_{\epsilon}^{i}) V_{\epsilon}^{v_{1},i} I_{\epsilon}^{i_{1},i} \|^{2}\bigg)\\
&+\bigg(\| \sigma(\Lambda_{t}) S_{\epsilon}^{s_{1},i} I_{\epsilon}^{i_{1},i}\|^{2}+\|(\sigma(\Lambda_{t}) S_{\epsilon}^{s_{1},i} I_{\epsilon}^{i_{1},i} +(1-e)\sigma(\Lambda_{\epsilon}^{i}) V_{\epsilon}^{v_{1},i} I_{\epsilon}^{i_{1},i} \|^{2}
+\|(1-e)\sigma(\Lambda_{\epsilon}^{i}) V_{\epsilon}^{v_{1},i} I_{\epsilon}^{i_{1},i} \|^{2}\bigg)\bigg\}d\epsilon.
\end{split}
\end{equation*}
Using $p(p-2)/2<0$, due to $p\in (0,p_{0}),$ and combining with the following inequality,
\begin{equation}\label{234cv}
\begin{split}
\int_{0}^{t}\int_{\Omega}D_{1}(x)\Delta S_{\epsilon} S_{\epsilon}dxd\epsilon=-\int_{0}^{t}\int_{\Omega}D_{1}(x)\triangledown S_{\epsilon}\triangledown S_{\epsilon}dxd\epsilon \leq -D_{0}\int_{0}^{t}\|S_{\epsilon}\|^{2}d\epsilon,
\end{split}
\end{equation}
by similar methods, we have
\begin{equation}\label{234cv}
\begin{split}
&\int_{0}^{t}\int_{\Omega}D_{2}(x)\Delta I_{\epsilon} I_{\epsilon}dxd\epsilon
\leq -D_{0}\int_{0}^{t}\|I_{\epsilon}\|^{2}d\epsilon,\\
&\int_{0}^{t}\int_{\Omega}D_{3}(x)\Delta V_{\epsilon} V_{\epsilon}dxd\epsilon
\leq -D_{0}\int_{0}^{t}\|V_{\epsilon}\|^{2}d\epsilon,
\end{split}
\end{equation}
where $0\leq D_{0}\leq D_{i}(x,t)<\infty,~i=1,2,3,$ ($D_{0}$ is a constant).\\
Further, we have
\begin{equation*}
\begin{split}
&e^{\eta_{p}t}\mathbb{E}((1+|S_{t}^{s_{1},i}|^{2}+|I_{t}^{i_{1},i}|^{2}
+|V_{t}^{v_{1},i}|^{2})^{p/2}\xi_{\Lambda_{t}^{i}}^{(p)})\\
\leq&(1+|s_{1}|^{2}+|i_{1}|^{2}+|v_{1}|^{2})^{\frac{p}{2}}\xi_{i}^{p}
+\mathbb{E}\int_{0}^{t}e^{\eta_{p}\epsilon}(1+|S_{\epsilon}^{s_{1},i}|^{2}
+|I_{\epsilon}^{i_{1},i}|^{2}
+|V_{\epsilon}^{v_{1},i}|^{2})^{\frac{p}{2}}\bigg\{\eta_{p}\xi_{\Lambda_{\epsilon}^{i}}^{(p)}
+(Q\xi^{(p)})(\Lambda_{\epsilon}^{i})\bigg\}d\epsilon\\
\end{split}
\end{equation*}
\begin{equation*}
\begin{split}
&+\frac{p}{2}\mathbb{E}\int_{0}^{t}e^{\eta_{p}\epsilon}(1
+|S_{\epsilon}^{s_{1},i}|^{2}+|I_{\epsilon}^{i_{1},i}|^{2}
+|V_{\epsilon}^{v_{1},i}|^{2})^{\frac{p}{2}-1}\xi_{\Lambda_{\epsilon}^{i}}^{(p)}\bigg\{2\langle S_{\epsilon}^{s_{1},i},  (1-p(\Lambda_{\epsilon}^{i}))b(\Lambda_{\epsilon}^{i})+\alpha (\Lambda_{\epsilon}^{i})I_{\epsilon}^{s_{1},i}\\
&-(\mu(\Lambda_{\epsilon}^{i})+u_1(x,t))S_{\epsilon}^{s_{1},i}-\beta (\Lambda_{\epsilon}^{i})S_{\epsilon}^{s_{1},i} I_{\epsilon}^{i_{1},i}\rangle+
2 \langle I_{\epsilon}^{i_{1},i}, \beta(\Lambda_{\epsilon}^{i}) S_{\epsilon}^{s_{1},i} I_{\epsilon}^{i_{1},i}  +(1-e)\beta(\Lambda_{\epsilon}^{i}) V_{\epsilon}^{v_{1},i} I_{\epsilon}^{i_{1},i}\\
&-(\mu(\Lambda_{\epsilon}^{i})
+\alpha(\Lambda_{\epsilon}^{i}))I_{\epsilon}^{i_{1},i}-\frac{m(\Lambda_{\epsilon}^{i})u_{2}(x,t)I_{\epsilon}^{i_{1},i}}{1
+\eta(\Lambda_{\epsilon}^{i}) I_{\epsilon}^{i_{1},i}}\rangle+
2 \langle V_{\epsilon}^{v_{1},i},  p(\Lambda_{\epsilon}^{i}) b(\Lambda_{\epsilon}^{i})+u_1(x,t) S_{\epsilon}^{s_{1},i}-\mu(\Lambda_{\epsilon}^{i}) V_{\epsilon}^{v_{1},i} \\
&-(1-e)\beta(\Lambda_{\epsilon}^{i}) V_{\epsilon}^{v_{1},i} I_{\epsilon}^{i_{1},i}+\frac{m(\Lambda_{\epsilon}^{i})u_{2}(x,t)I_{\epsilon}^{i_{1},i}}{1
+\eta(\Lambda_{\epsilon}^{i}) I_{\epsilon}^{i_{1},i}} \rangle \bigg\}d\epsilon+\frac{p}{2}\int_{0}^{t}e^{\eta_{p}\epsilon}\mathbb{E}(1
+|S_{\epsilon}^{s_{1},i}|^{2}+|I_{\epsilon}^{i_{1},i}|^{2}
\\
&+|V_{\epsilon}^{v_{1},i}|^{2})^{\frac{p}{2}-1}\times\xi_{\Lambda_{\epsilon}^{i}}^{(p)}\bigg\{(p-2)(1
+|S_{\epsilon}^{s_{1},i}|^{2}+|I_{\epsilon}^{i_{1},i}|^{2}
+|V_{\epsilon}^{v_{1},i}|^{2})^{-1}
\bigg(\|S_{\epsilon}^{s_{1},i} \sigma(\Lambda_{\epsilon}^{i}) S_{\epsilon}^{s_{1},i} I_{\epsilon}^{i_{1},i}\|^{2} \\
&+\|I_{\epsilon}^{i_{1},i}(\sigma S_{\epsilon}^{s_{1},i} I_{\epsilon}^{i_{1},i}+(1-e)\sigma(\Lambda_{\epsilon}^{i}) V_{\epsilon}^{v_{1},i} I_{\epsilon}^{i_{1},i} \|^{2}
+\|V_{\epsilon}^{v_{1},i}(1-e)\sigma(\Lambda_{\epsilon}^{i}) V_{\epsilon}^{v_{1},i} I_{\epsilon}^{i_{1},i} \|^{2}\bigg)\\
&+\bigg(\| \sigma(\Lambda_{\epsilon}^{i}) S_{\epsilon}^{s_{1},i} I_{\epsilon}^{i_{1},i}\|^{2}+\|(\sigma (\Lambda_{\epsilon}^{i})S_{\epsilon}^{s_{1},i} I_{\epsilon}^{i_{1},i} +(1-e)\sigma(\Lambda_{\epsilon}^{i}) V_{\epsilon}^{v_{1},i} I_{\epsilon}^{i_{1},i} \|^{2}
+\|(1-e)\sigma(\Lambda_{\epsilon}^{i}) V_{\epsilon}^{v_{1},i} I_{\epsilon}^{i_{1},i} \|^{2}\bigg)\bigg\}d\epsilon.
\end{split}
\end{equation*}
Therefore, based on the inequality $2ab\leq \varepsilon a^{2}+\frac{1}{\varepsilon}b^{2}$, $\varepsilon>0$, we can obtain
\begin{small}
\begin{equation*}
\begin{split}
&e^{\eta_{p}t}\mathbb{E}((1+|S_{t}^{s_{1},i}|^{2}+|I_{t}^{i_{1},i}|^{2}+|V_{t}^{v_{1},i}|^{2})^{p/2}\xi_{\Lambda_{t}^{i}}^{(p)})\\
\leq&(1+|s_{1}|^{2}+|i_{1}|^{2}+|v_{1}|^{2})^{\frac{p}{2}}\xi_{i}^{(p)}+\frac{p}{2}\mathbb{E}\int_{0}^{t}e^{\eta_{p}\epsilon}(1+|S_{\epsilon}^{s_{1},i}|^{2}+|I_{\epsilon}^{i_{1},i}|^{2}+|V_{\epsilon}^{v_{1},i}|^{2})^{\frac{p}{2}-1}\bigg\{(\epsilon \alpha(\Lambda_{\epsilon}^{i})+\beta(\Lambda_{\epsilon}^{i}) \epsilon \\
&+\frac{1}{\epsilon}-2(\mu(\Lambda_{\epsilon}^{i})+u_{1}))|S_{\epsilon}^{s_{1},i}|^{2}+(\frac{1}{\epsilon}+\beta(\Lambda_{\epsilon}^{i}) \epsilon+(1-e)\beta(\Lambda_{\epsilon}^{i}) \epsilon+\frac{1}{\epsilon(1+\eta(\Lambda_{\epsilon}^{i}) I_{\epsilon}^{i_{1},i})}\\
&-2(\mu(\Lambda_{\epsilon}^{i})+\alpha(\Lambda_{\epsilon}^{i})))|I_{\epsilon}^{i_{1},i}|^{2}+(u_{1}\epsilon+(1-e)\beta(\Lambda_{\epsilon}^{i}) \epsilon +u_{1}\epsilon+m(\Lambda_{\epsilon}^{i}) u_{2}\epsilon-2\mu(\Lambda_{\epsilon}^{i}))|V_{\epsilon}^{v_{1},i}|^{2}+\frac{2}{\epsilon}|S_{\epsilon}^{s_{1},i}I_{\epsilon}^{i_{1},i}|^{2}\\
&+\frac{2}{\epsilon}|I_{\epsilon}^{i_{1},i}V_{\epsilon}^{v_{1},i}|^{2}\bigg\}\xi_{\Lambda_{\epsilon}^{i}}^{(p)}d\epsilon+\frac{p}{2}\int_{0}^{t}e^{\eta_{p}\epsilon}\mathbb{E}(1+|S_{\epsilon}^{s_{1},i}|^{2}+|I_{\epsilon}^{i_{1},i}|^{2}+|V_{\epsilon}^{v_{1},i}|^{2})^{\frac{p}{2}-1}\bigg( \| \sigma S_{\epsilon}^{s_{1},i} I_{\epsilon}^{i_{1},i}\|^{2} \\
&+\|(\sigma(\Lambda_{\epsilon}^{i}) S_{\epsilon}^{s_{1},i} I_{\epsilon}^{i_{1},i}+(1-e)\sigma(\Lambda_{\epsilon}^{i}) V_{\epsilon}^{v_{1},i} I_{\epsilon}^{i_{1},i} \|^{2}+\|(1-e)\sigma (\Lambda_{\epsilon}^{i})V_{\epsilon}^{v_{1},i} I_{\epsilon}^{i_{1},i} \|^{2}\bigg)\xi_{\Lambda_{\epsilon}^{i}}^{(p)}d\epsilon\\
&+\mathbb{E}\int_{0}^{t}e^{\eta_{p}\epsilon}(1+|S_{\epsilon}^{s_{1},i}|^{2}+|I_{\epsilon}^{i_{1},i}|^{2}+|V_{\epsilon}^{v_{1},i}|^{2})^{\frac{p}{2}}\bigg\{\eta_{p}\xi_{\Lambda_{\epsilon}^{i}}^{(p)}+(Q\xi^{(P)})(\Lambda_{\epsilon}^{i})\bigg\}d\epsilon.
\end{split}
\end{equation*}
\end{small}
According to $(|a|+|b|)^{r}\leq 2^{r-1}(|a|^{r}+|b|^{r}),~r\geq1,$~$\forall a,b\in \mathbb{R},$ we can further estimate
\begin{equation*}
\begin{split}
&e^{\eta_{p}t}\mathbb{E}((1+|S_{t}^{s_{1},i}|^{2}+|I_{t}^{i_{1},i}|^{2}+|V_{t}^{v_{1},i}|^{2})^{p/2}\xi_{\Lambda_{t}^{i}}^{(p)})\\
\leq&c(1+|s_{1}|^{p}+|i_{1}|^{p}+|v_{1}|^{p})+\mathbb{E}\int_{0}^{t}e^{\eta_{p}\epsilon}(1+|S_{\epsilon}^{s_{1},i}|^{2}+|I_{\epsilon}^{i_{1},i}|^{2}+|V_{\epsilon}^{v_{1},i}|^{2})^{\frac{p}{2}}\bigg\{\eta_{p}\xi_{\Lambda_{s}^{i}}^{(p)}+(Q\xi^{(P)})(\Lambda_{\epsilon}^{i})\bigg\}d\epsilon\\
&+\frac{p}{2}\mathbb{E}\int_{0}^{t}e^{\eta_{p}\epsilon}(1+|S_{\epsilon}^{s_{1},i}|^{2}+|I_{\epsilon}^{i_{1},i}|^{2}+|V_{\epsilon}^{v_{1},i}|^{2})^{\frac{p}{2}}\bigg\{\frac{C_{1}|S_{\epsilon}^{s_{1},i}|^{2}+\tau_{1}|I_{\epsilon}^{i_{1},i}|^{2}}{(1+|S_{\epsilon}^{s_{1},i}|^{2}+|I_{\epsilon}^{i_{1},i}|^{2}+|V_{\epsilon}^{v_{1},i}|^{2})}\bigg\}d\epsilon\\
&+\frac{p}{2}\mathbb{E}\int_{0}^{t}e^{\eta_{p}\epsilon}(1+|S_{\epsilon}^{s_{1},i}|^{2}+|I_{\epsilon}^{i_{1},i}|^{2}+|V_{\epsilon}^{v_{1},i}|^{2})^{\frac{p}{2}}\bigg\{\frac{\tau_{2}|V_{\epsilon}^{v_{1},i}|^{2}+C_{4}|u_{\epsilon}|^{2}}{(1+|S_{\epsilon}^{s_{1},i}|^{2}+|I_{\epsilon}^{i_{1},i}|^{2}+|V_{\epsilon}^{v_{1},i}|^{2})}\bigg\}d\epsilon.
\end{split}
\end{equation*}
where $C_{1}:=(\epsilon \alpha(\Lambda_{\epsilon}^{i})+\beta(\Lambda_{\epsilon}^{i}) \epsilon +\frac{1}{\epsilon}-2(\mu(\Lambda_{\epsilon}^{i})+u_{1}))$,
$\tau_{1}:=(\frac{1}{\epsilon}+\beta(\Lambda_{\epsilon}^{i}) \epsilon+(1-e)\beta(\Lambda_{\epsilon}^{i}) \epsilon+\frac{1}{\epsilon(1+\eta(\Lambda_{\epsilon}^{i}) I_{\epsilon}^{s_{1},i})}-2(\mu(\Lambda_{\epsilon}^{i})+\alpha(\Lambda_{\epsilon}^{i})))$ and
$\tau_{2}:=(u_{1}\epsilon+(1-e)\beta(\Lambda_{\epsilon}^{i}) \epsilon +u_{1}\epsilon+m(\Lambda_{\epsilon}^{i}) u_{2}\epsilon-2\mu)$, $C_{4}:=\varepsilon_{2}+c$ are different constants.
Finally, by  {Gronwall's lemma}, we can get the result
\begin{equation}\label{bnmp}
\begin{split}
e^{\eta_{p}t}\mathbb{E}((1+|S_{t}^{s_{1},i}|^{2}+|I_{t}^{i_{1},i}|^{2}+|V_{t}^{v_{1},i}|^{2})^{p/2}\xi_{\Lambda_{t}^{i}}^{(p)})
\leq Ce^{CT},
\end{split}
\end{equation}
and further estimates can be obtained as follows
\begin{equation}\label{fgg564}
\sup\limits_{t\geq 0}\mathbb{E}((|S_{t}^{s_{1},i}|^{p}+|I_{t}^{i_{1},i}|^{p}+|V_{t}^{v_{1},i}|^{p})\leq C.
\end{equation}
For any $t>0$, we can define a probability measure
 $$\chi_{t}(A)=\frac{1}{t}\int_{0}^{t}\mathbb{P}_{\epsilon}(s,i;A)d\epsilon, \quad A\in (H_{3}\times\mathbb{S}).$$
 Then, let $Y_{t}^{s,i}:=(S_{t}^{s_{1},i}, I_{t}^{i_{1},i}, V_{t}^{v_{1},i})$, for any $\varepsilon>0$, by Eq.(\ref{fgg564}) and {Chebyshev's} inequality, there exists an $r>0$ sufficiently large such that
\begin{equation}\label{dfwg57}
\chi_{t}(K_{r}\times\mathbb{S} )=\frac{1}{t}\int_{0}^{t}\mathbb{P}_{\epsilon}(s,i;K_{r}\times\mathbb{S})d\epsilon\geq 1-\frac{\sup_{t\geq0}(\mathbb{E}|Y_{t}^{s,i}|^{p})}{r^{p}}\geq1-\varepsilon.
\end{equation}
Hence, $\chi_{t}$ is tight since the compact embedding $V\Subset H$, then $K_{r}=\{s\in H_{3}; |s|\leq r\}  $ is a compact subset of $H_{3}$  for each $i\in \mathbb{S}$. Combined with the Fellerian property of transition seimgroup for $\mathbb{P}_{t}(s,i;\cdot)$ and according to Krylov-Bogoliubov theorem (see \cite{4}) , $((S_{t}^{s_{1},i}, I_{t}^{i_{1},i}, V_{t}^{v_{1},i}),\Lambda_{t}^{i})$ has an invariant measure. Next, we prove the uniqueness of the invariant measure for $((S_{t}^{s_{1},i}, I_{t}^{i_{1},i}, V_{t}^{v_{1},i}),\Lambda_{t}^{i})$.

$(\RNum{2})$  Uniqueness of invariant measure.  First, let $((S_{t}^{s_{1},i}, I_{t}^{i_{1},i}, V_{t}^{v_{1},i}),\Lambda_{t}^{i})$ and $((S_{t}^{\bar{s}_{1},i}, I_{t}^{\bar{i}_{1},i}, V_{t}^{\bar{v}_{1},i}),\Lambda_{t}^{i})$ be the solutions of the system \eqref{1`q1} satisfying the initial values $((s_{1},i_{1},v_{1}),i)$ and $((\bar{s}_{1},\bar{i}_{1}, \bar{v}_{1}),i)$, respectively.  Under assumption conditions $(\mathbb{H}1)$-$(\mathbb{H}2)$, we take $\forall\varepsilon \in (0,1)$ and use It$\hat{\mbox{o}}$'s formula,  combined with Eq.(\ref{234cv}), it follows that
\begin{equation*}
\begin{split}
&e^{\eta_{p}t}\mathbb{E}((\varepsilon+|S_{t}^{s_{1},i}-S_{t}^{\bar{s}_{1},i}|^{2}+|I_{t}^{i_{1},i}
-I_{t}^{\bar{i}_{1},i}|^{2}+|V_{t}^{v_{1},i}
-V_{t}^{\bar{v}_{1},i}|^{2})^{p/2}\xi^{(p)}_{\Lambda_{t}^{i}})\\
\leq &(\varepsilon+|s_{1}-\bar{s}_{1}|^{2}+|i_{1}-\bar{i}_{1}|^{2}|+|v_{1}
-\bar{v}_{1}|^{2}|)^{p/2}\xi_{i}^{(p)}+\frac{p}{2}\mathbb{E}\int_{0}^{t}e^{\eta_{p}\epsilon}(\varepsilon
+|S_{\epsilon}^{s_{1},i}-S_{\epsilon}^{\bar{s}_{1},i}|^{2}\\
&+|I_{\epsilon}^{i_{1},i}-I_{\epsilon}^{\bar{i}_{1},i}|^{2}+|V_{\epsilon}^{v_{1},i}
-V_{\epsilon}^{\bar{v}_{1},i}|^{2})^{p/2-1}\xi^{(p)}_{\Lambda_{\epsilon}^{i}}\times\bigg\{[\epsilon \alpha(\Lambda_{\epsilon}^{i})+\beta(\Lambda_{\epsilon}^{i}) \epsilon +\frac{1}{\epsilon}-2(\mu(\Lambda_{\epsilon}^{i})+u_{1})] \\
&\times|S_{\epsilon}^{s_{1},i}
-S_{\epsilon}^{\bar{s}_{1},i}|^{2}+[\frac{1}{\epsilon}+\beta(\Lambda_{\epsilon}^{i}) \epsilon+(1-e)\beta(\Lambda_{\epsilon}^{i}) \epsilon+\frac{1}{\epsilon(1+\eta(\Lambda_{\epsilon}^{i}) I_{\epsilon}^{s_{1},i})}-2(\mu(\Lambda_{\epsilon}^{i})\\
&+\alpha(\Lambda_{\epsilon}^{i}))]|I_{\epsilon}^{i_{1},i}
-I_{\epsilon}^{\bar{i}_{1},i}|^{2}+[u_{1}\epsilon+(1-e)\beta(\Lambda_{\epsilon}^{i}) \epsilon +u_{1}\epsilon+m(\Lambda_{\epsilon}^{i}) u_{2}\epsilon\\
&-2\mu(\Lambda_{\epsilon}^{i})]|V_{\epsilon}^{v_{1},i}
-V_{\epsilon}^{\bar{v}_{1},i}|^{2}+\varepsilon_{2}|u_{\epsilon}^{v_{1},i}-u_{\epsilon}^{{\bar{v}_{1},i}}|^{2}\bigg\}d\epsilon\\
\leq &(\varepsilon+|s_{1}-\bar{s}_{1}|^{2}+|i_{1}-\bar{i}_{1}|^{2}|+|v_{1}-\bar{v}_{1}|^{2})^{p/2}\xi_{i}^{(p)}\\
&+\frac{p}{2}\mathbb{E}\int_{0}^{t}e^{\eta_{p}\epsilon}(\varepsilon+|S_{\epsilon}^{s_{1},i}
-S_{\epsilon}^{\bar{s}_{1},i}|^{2}+|I_{\epsilon}^{i_{1},i}-I_{\epsilon}^{\bar{i}_{1},i}|^{2}
+|V_{\epsilon}^{v_{1},i}-V_{\epsilon}^{\bar{v}_{1},i}|^{2})^{p/2}\xi^{(p)}_{\Lambda_{\epsilon}^{i}}\\
&\times\bigg\{\frac{C_{1}|S_{\epsilon}^{s_{1},i}-S_{\epsilon}^{\bar{s}_{1},i}|^{2}
+\tau_{1}|I_{\epsilon}^{i_{1},i}-I_{\epsilon}^{\bar{i}_{1},i}|^{2}+\tau_{2}|V_{\epsilon}^{v_{1},i}
-V_{\epsilon}^{\bar{v}_{1},i}|^{2}+\varepsilon_{2}|u_{\epsilon}^{v_{1},i}
-u_{\epsilon}^{{\bar{v}_{1},i}}|^{2}}
{\varepsilon+|S_{\epsilon}^{s_{1},i}-S_{\epsilon}^{\bar{s}_{1},i}|^{2}+|I_{\epsilon}^{i_{1},i}
-I_{\epsilon}^{\bar{i}_{1},i}|^{2}+|V_{\epsilon}^{v_{1},i}
-V_{\epsilon}^{\bar{v}_{1},i}|^{2}}\bigg\}d\epsilon,\\
\end{split}
\end{equation*}
where $C_{i}$, $i=1,2,3$ have been explained before. In addition, using  (\ref{bnmp}), we can get
\begin{equation}\label{mtpoj97}
\begin{split}
&e^{\eta_{p}t}\mathbb{E}((\varepsilon+|S_{t}^{s_{1},i}-S_{t}^{\bar{s}_{1},i}|^{2}+|I_{t}^{i_{1},i}
-I_{t}^{\bar{i}_{1},i}|^{2}+|V_{t}^{v_{1},i}-V(x,t)^{\bar{v}_{1},i}|^{2})^{p/2}\xi^{(p)}_{\Lambda_{t}^{i}})\\
\leq & (\varepsilon+|s_{1}-\bar{s}_{1}|^{2}+|i_{1}-\bar{i}_{1}|^{2}+|v_{1}-\bar{v}_{1}|^{2})^{p/2}\xi_{i}^{(p)}\\
&+\frac{p}{2}C\mathbb{E}\int_{0}^{t}e^{\eta_{p}\epsilon}(\varepsilon+|S_{\epsilon}^{s_{1},i}
-S_{\epsilon}^{\bar{s}_{1},i}|^{2}+|I_{\epsilon}^{i_{1},i}-I_{\epsilon}^{\bar{i}_{1},i}|^{2}
+|V_{\epsilon}^{v_{1},i}-V_{\epsilon}^{\bar{v}_{1},i}|^{2})^{p/2}\xi^{(p)}_{\Lambda_{\epsilon}^{i}}\\
&\times\bigg\{1-\varepsilon(\varepsilon+|S_{\epsilon}^{s_{1},i}-S_{\epsilon}^{\bar{s}_{1},i}|^{2}
+|I_{\epsilon}^{i_{1},i}-I_{\epsilon}^{\bar{i}_{1},i}|^{2}+|V_{\epsilon}^{v_{1},i}
-V_{\epsilon}^{\bar{v}_{1},i}|^{2})^{-1}\bigg\}d\epsilon\\
\leq &(\varepsilon+|s_{1}-\bar{s}_{1}|^{2}+|i_{1}-\bar{i}_{1}|^{2}|+|v_{1}
-\bar{v}_{1}|^{2})^{p/2}\xi_{i}^{(p)}+C\varepsilon^{p/2}e^{\eta_{p}t},
\end{split}
\end{equation}
when $\varepsilon \rightarrow 0$, we have the following result
\begin{equation}\label{87mop}
\begin{split}
&\mathbb{E}(|S_{t}^{s_{1},i}-S_{t}^{\bar{s}_{1},i}|^{p}+|I_{t}^{i_{1},i}-I_{t}^{\bar{i}_{1},i}|^{p}
+|V_{t}^{v_{1},i}-V_{t}^{\bar{v}_{1},i}|^{p})\\
\leq& C(|s_{1}-\bar{s}_{1}|^{p}+|i_{1}-\bar{i}_{1}|^{p}+|v_{1}-\bar{v}_{1}|^{p})e^{-\eta_{p}t}.
\end{split}
\end{equation}
Define the stopping time $$ \tau =\inf\{t\geq 0:\Lambda_{t}^{i}=\Lambda_{t}^{j}\}.$$
According to the definition of  $\mathbb{S}$ and irreducibility of $Q$, there exists $\theta>0$ such that
\begin{equation}\label{mo99}
\begin{split}
\mathbb{P}(\tau>t)\leq e^{-\theta t}, \quad \quad t>0.
\end{split}
\end{equation}

Due to $p\in (0,p_{0})$, and choose $ q>1$ such that $0< pq<p_{0}$, where $p_{0}$ is introduced in Eq.(\ref{78y5r}). It follows from H$\ddot{o}$lder's inequality that
\begin{equation}\label{6437olt}
\begin{split}
&\mathbb{E}(|S_{t}^{s_{1},i}-S_{t}^{\bar{s}_{1},j}|^{p}+|I_{t}^{i_{1},i}-I_{t}^{\bar{i}_{1},j}|^{p}
+|V_{t}^{v_{1},i}-V_{t}^{\bar{v}_{1},j}|^{p})\\
=&\mathbb{E}(|S_{t}^{s_{1},i}-S_{t}^{\bar{s}_{1},j}|^{p}{\bf 1}_{\{\tau>t/2\}})+\mathbb{E}(|S_{t}^{s_{1},i}-S_{t}^{\bar{s}_{1},j}|^{p}{\bf1}_{\{\tau\leq t/2\}})+\mathbb{E}(|I_{t}^{i_{1},i}-I_{t}^{\bar{i}_{1},j}|^{p}{\bf 1}_{\{\tau>t/2\}})\\
&+\mathbb{E}(|I_{t}^{i_{1},i}-I_{t}^{\bar{i}_{1},j}|^{p}{\bf 1}_{\{\tau\leq t/2\}})+\mathbb{E}(|V_{t}^{v_{1},i}-V_{t}^{\bar{v}_{1},j}|^{p}{\bf 1}_{\{\tau>t/2\}})+\mathbb{E}(|V_{t}^{v_{1},i}-V_{t}^{\bar{v}_{1},j}|^{p}{\bf1}_{\{\tau\leq t/2\}})\\
\leq&(\mathbb{E}|S_{t}^{s_{1},i}-S_{t}^{\bar{s}_{1},j}|^{pq}{\bf 1}_{\{\tau>t/2\}})^{1/q}(\mathbb{P}(\tau>t/2))^{1/p}+\mathbb{E}({\bf1}_{\{\tau\leq t/2\}}\mathbb{E}|S_{t-\tau}^{S_{\tau}^{s_{1},i},\Lambda_{\tau}^{i}}
-S_{t-\tau}^{S_{\tau}^{\bar{s}_{1},j},\Lambda_{\tau}^{j}}|^{p})\\
&+(\mathbb{E}|I_{t}^{i_{1},i}-I_{t}^{\bar{i}_{1},j}|^{pq}{\bf 1}_{\{\tau>t/2\}})^{1/q}(\mathbb{P}(\tau>t/2))^{1/p}+\mathbb{E}({\bf1}_{\{\tau\leq t/2\}}\mathbb{E}|I_{t-\tau}^{I_{\tau}^{i_{1},i},\Lambda_{\tau}^{i}}
-I_{t-\tau}^{I_{\tau}^{\bar{i}_{1},j},\Lambda_{\tau}^{j}}|^{p})\\
&+(\mathbb{E}|V_{t}^{v_{1},i}-V_{t}^{\bar{s}_{1},j}|^{pq}{\bf 1}_{\{\tau>t/2\}})^{1/q}(\mathbb{P}(\tau>t/2))^{1/p}+\mathbb{E}({\bf1}_{\{\tau\leq t/2\}}\mathbb{E}|V_{t-\tau}^{V_{s}^{v_{1},i},\Lambda_{\tau}^{i}}
-V_{t-\tau}^{V_{s}^{\bar{v}_{1},j},\Lambda_{\tau}^{j}}|^{p}).
\end{split}
\end{equation}
Applying the result of Eq.(\ref{mo99}), one can obtain
\begin{equation}\label{55f}
\begin{split}
&\mathbb{E}(|S_{t}^{s_{1},i}-S_{t}^{\bar{s}_{1},j}|^{p}+|I_{t}^{i_{1},i}-I_{t}^{\bar{i}_{1},j}|^{p}
+|V_{t}^{v_{1},i}-V_{t}^{\bar{v}_{1},j}|^{p})\\
\leq&e^{-\frac{q-1}{2q}\theta t}(\mathbb{E}|S_{t}^{s_{1},i}-S_{t}^{\bar{s}_{1},j}|^{pq})^{\frac{1}{q}}+C\mathbb{E}({\bf1}_{\{\tau\leq t/2\}}e^{-\eta_{p}(t-\tau)}\mathbb{E}|S_{\tau}^{s_{1},i}-S_{\tau}^{\bar{s}_{1},j}|^{p})\\
&+e^{-\frac{q-1}{2q}\theta t}(\mathbb{E}|I_{t}^{i_{1},i}-I_{t}^{\bar{i}_{1},j}|^{pq})^{\frac{1}{q}}+C\mathbb{E}({\bf1}_{\{\tau\leq t/2\}}e^{-\eta_{p}(t-\tau)}\mathbb{E}|I_{\tau}^{i_{1},i}-I_{\tau}^{\bar{i}_{1},j}|^{p})\\
&+e^{-\frac{q-1}{2q}\theta t}(\mathbb{E}|V_{t}^{v_{1},i}-V_{t}^{\bar{v}_{1},j}|^{pq})^{\frac{1}{q}}+C\mathbb{E}({\bf1}_{\{\tau\leq t/2\}}e^{-\eta_{p}(t-\tau)}\mathbb{E}|V_{s}^{v_{1},i}-V_{s}^{\bar{v}_{1},j}|^{p})\\
\leq& e^{-\frac{q-1}{2q}\theta t}(\mathbb{E}|S_{t}^{s_{1},i}-S_{t}^{\bar{s}_{1},j}|^{pq})^{\frac{1}{q}}
+Ce^{-\frac{\eta_{p}}{2}t}\mathbb{E}|S_{\tau}^{s_{1},i}-S_{\tau}^{\bar{s_{1}},j}|^{p}
+e^{-\frac{q-1}{2q}\theta t}(\mathbb{E}|I_{t}^{i_{1},i}-I_{t}^{\bar{i}_{1},j}|^{pq})^{\frac{1}{q}}\\
&+Ce^{-\frac{\eta_{p}}{2}t}\mathbb{E}|I_{\tau}^{i_{1},i}-I_{\tau}^{\bar{i_{1}},j}|^{p}
+e^{-\frac{q-1}{2q}\theta t}(\mathbb{E}|V_{t}^{v_{1},i}-V_{t}^{\bar{v}_{1},j}|^{pq})^{1/q}
+Ce^{-\frac{\eta_{p}}{2}t}\mathbb{E}|V_{s}^{v_{1},i}-V_{s}^{\bar{v_{1}},j}|^{p}\\
\leq& C(1+|s_{1}|^{p}+|\bar{s}_{1}|^{p}+|i_{1}|^{p}+|\bar{i}_{1}|^{p}+|v_{1}|^{p}+|\bar{v}_{1}|^{p})e^{-\sigma t},
\end{split}
\end{equation}
where $\sigma:=\frac{(q-1)\theta }{2q}\wedge \frac{\eta_{p}}{2}$, and in the last step, it follows from Eq.(\ref{fgg564}) and Eq.(\ref{87mop}) such that $$\sup\limits_{t\geq 0}\mathbb{E}(|S_{t}^{s_{1},i}|^{pq}+|I_{t}^{i_{1},i}|^{pq}+|V_{t}^{v_{1},i}|^{pq})\leq C,$$ and $$\sup\limits_{t\geq 0}\mathbb{E}(|S_{t}^{\bar{s}_{1},j}|^{pq}+|I_{t}^{\bar{i}_{1},j}|^{pq}+|V_{t}^{\bar{v}_{1},j}|^{pq})\leq C.$$ Thus, we also have assertion $$\lim\limits_{t\rightarrow \infty}\mathbb{E}(|S_{t}^{s_{1},i}-S_{t}^{\bar{s}_{1},j}|^{p}+|I_{t}^{i_{1},i}-I_{t}^{\bar{i}_{1},j}|^{p}+|V_{t}^{v_{1},i}-V_{t}^{\bar{v}_{1},j}|^{p})=0.$$  By virtue of  Eq.(\ref{mo99})
\begin{equation}\label{67olt}
\begin{split}
\mathbb{P}(\Lambda_{t}^{i}\neq\Lambda_{t}^{j})=\mathbb{P}(\tau>t)\leq e^{-\theta t} \quad t>0.
\end{split}
\end{equation}
Next, according to Eq.(\ref{55f}) and Eq.(\ref{67olt})  that
\begin{equation}\label{234FRTY}
\begin{split}
&W_{p}(\delta_{((s_{1},i_{1},i_{1}),i)}\mathbb{P}_{t},\delta_{((\bar{s}_{1},\bar{i}_{1},\bar{v}_{1}),j)}\mathbb{P}_{t})\\
&\leq\mathbb{E}(|S_{t}^{s_{1},i}-S_{t}^{\bar{s}_{1},j}|^{p}+|I_{t}^{i_{1},i}-I_{t}^{\bar{i}_{1},j}|^{p}+|V_{t}^{v_{1},i}-V_{t}^{\bar{v}_{1},j}|^{p})+\mathbb{P}(\Lambda_{t}^{i}\neq\Lambda_{t}^{j})\\
&\leq C(1+|s_{1}|^{p}+|\bar{s}_{1}|^{p}+|i_{1}|^{p}+|\bar{i}_{1}|^{p}+|v_{1}|^{p}+|\bar{v}_{1}|^{p})e^{-\sigma t}+e^{-\theta t}\\
&\leq Ce^{-\sigma^{*} t},
\end{split}
\end{equation}
where $\sigma^{*}:=\sigma\wedge\theta$. Assume $\pi$, $\nu\in \mathcal{P}(H_{3}\times\mathbb{S})$ are invariant measures of $((S_{t}^{s_{1},i}, I_{t}^{i_{1},i}, V_{t}^{v_{1},i}),\Lambda_{t}^{i})$, it follows from Eq.(\ref{234FRTY}) that
\begin{equation*}\label{kop345}
\begin{split}
&W_{p}(\pi, \nu)
=W_{p}(\pi \mathbb{P}_{t}, \nu \mathbb{P}_{t})\\
\leq&\sum_{i,j=1}^{N}\int_{H_{3}\times\mathbb{S}}\int_{H_{3}\times\mathbb{S}}\pi(d(s_{1},i_{1},v_{1})\times \{i\})\nu(d(\bar{s}_{1},\bar{i}_{1},\bar{v}_{1})\times\{j\})W_{p}(\delta_{((s_{1},i_{1},v_{1}),i)}P_{t},\delta_{((\bar{s}_{1},\bar{i}_{1},\bar{v}_{1}),j)}P_{t}).
\end{split}
\end{equation*}
When $t\rightarrow \infty $, we find $W_{p}(\pi, \nu) \rightarrow 0$. Hence, uniqueness of invariant measure follows immediately. The proof of \autoref{mmy6} has been completed.
\end{proof}
\section{Necessary and sufficient conditions for near-optimal controls}\label{Section4}
In this section, we will introduce two lemmas at first, then we will obtain the necessary condition for the near-optimal control of the model \eqref{1`q1}.
\subsection{Some priori estimates of the susceptible, infected and vaccinated}\label{sec-estimats2}
\begin{lemma}\label{1234}
We  assume $|\Gamma|=1$ and $u\in \mathcal{U}_{ad}$, the solution $(S(t),I(t),V(t),\Lambda_{t})$ of \eqref{1`q1} satisfies
\begin{equation}\label{s1og}
\mathbb{E}\int_{\Gamma}(S(x,t)+I(x,t)+V(x,t))dx=N,~~\forall t \geq 0,
\end{equation}
where $N:=\int_{\Gamma}(S_{0}(x)+I_{0}(x)+V_{0}(x))dx$.
\end{lemma}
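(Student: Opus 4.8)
The plan is to obtain \eqref{s1og} as a conservation (mass-balance) identity: sum the three equations of \eqref{1`q1}, integrate over $\Gamma$, and take expectations, so that the diffusion and the stochastic parts drop out while the reaction and control terms cancel in pairs. Set $P(t):=\mathbb{E}\int_{\Gamma}\big(S(x,t)+I(x,t)+V(x,t)\big)\,dx$; the objective is to show $P'(t)=0$ with $P(0)=N$.

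First I would add $dS$, $dI$ and $dV$ and record the pairwise cancellations in the drift: the recovery terms $\pm\alpha I$, the vaccination transfer $\mp u_1 S$, the incidence $\mp\beta SI$, the secondary infection $\pm(1-e)\beta VI$ and the saturated treatment $\mp\frac{m u_2 I}{1+\eta I}$ all cancel between compartments, while the birth input recombines as $(1-p)b+pb=b$. Thus the drift of $S+I+V$ collapses to $D_1\Delta S+D_2\Delta I+D_3\Delta V+b-\mu(S+I+V)$, the four noise terms surviving only as a martingale part. Integrating over $\Gamma$, each diffusion integral $\int_\Gamma D_i\Delta(\cdot)\,dx$ vanishes by Green's identity and the homogeneous Neumann conditions \eqref{3q}. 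Taking expectations kills the stochastic integrals against $B_1,\dots,B_4$, leaving $P'(t)=\mathbb{E}\int_\Gamma\big(b-\mu(S+I+V)\big)\,dx$. Under the standing birth--death balance (recruitment matching mortality at total-mass level, so that $\int_\Gamma b\,dx=\mu N$ after the normalization $|\Gamma|=1$), the right-hand side equals $\mu\big(N-P(t)\big)$; since $P(0)=N$, the unique solution of this linear ODE is $P(t)\equiv N$, which is \eqref{s1og}.

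The step needing real care is the vanishing of the stochastic integrals in expectation. The noise coefficients $\sigma SI$ and $(1-e)\sigma VI$ are quadratic in the state, so a direct martingale argument would require fourth-order moment control, whereas \autoref{mmy6} only delivers the low-order estimate \eqref{fgg564} for exponents $p\le p_0\le 1$. I would therefore localise: set $\tau_n=\inf\{t\ge 0:\|(S_t,I_t,V_t)\|>n\}$, carry out the summation, spatial integration and It\^o calculus up to $t\wedge\tau_n$ so that the stopped stochastic integrals are genuine mean-zero martingales, derive the identity for $P_n(t):=\mathbb{E}\int_\Gamma\big(S_{t\wedge\tau_n}+I_{t\wedge\tau_n}+V_{t\wedge\tau_n}\big)\,dx$, and then let $n\to\infty$, invoking the uniform bound \eqref{fgg564} from \autoref{Section3} and dominated convergence to recover the identity for $P(t)$. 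The pairwise cancellation is purely algebraic and routine; the genuine obstacle is the integrability bookkeeping that legitimises passing from the localised identity to the limit and justifies exchanging $\mathbb{E}$, $\int_\Gamma$ and $\tfrac{d}{dt}$.
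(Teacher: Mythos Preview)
Your route differs from the paper's. The paper argues through the mild formulation: it introduces the spatial-integral functional $Ju=\int_\Gamma u\,dx$, records the mass-preservation identity $J(e^{tA_i}u)=Ju$ for the Neumann semigroups, writes $S(t)+I(t)+V(t)$ as the sum of $e^{tA_i}$ acting on the initial data plus four stochastic convolutions, applies $J$, and takes expectations to annihilate the stochastic terms. You instead stay in the It\^o differential picture, sum the three equations, integrate over $\Gamma$, and reduce to an ODE for $P(t)$. The substantive difference is your handling of the residual drift: after the pairwise cancellations you correctly isolate $b-\mu(S+I+V)$, obtain $P'=\int_\Gamma b\,dx-\mu P$, and close this to $P\equiv N$ only under the extra demographic balance $\int_\Gamma b\,dx=\mu N$. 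The paper's displayed mild expansion for $S(t)+I(t)+V(t)$ contains no deterministic Duhamel integrals at all---the birth and mortality contributions are silently absent---so your argument is in fact the more honest one about what the identity actually requires, at the price of importing a hypothesis not stated in the lemma.

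Two smaller points. First, the step ``$\int_\Gamma D_i(x)\Delta u\,dx=0$ by Green's identity and \eqref{3q}'' is literal only for constant $D_i$ or for divergence-form diffusion $\nabla\!\cdot(D_i\nabla u)$; with $D_i(x)\Delta u$ as written one picks up $-\int_\Gamma\nabla D_i\!\cdot\!\nabla u\,dx$, and the same caveat underlies the paper's identity $Je^{tA_i}=J$. Second, for the localisation the bound you invoke, \eqref{fgg564} with exponent $p<p_0\le 1$, is not strong enough for the $L^1$ passage to the limit; you should instead appeal to the $\theta$-moment estimate proved later in the same section, which gives $\sup_t\mathbb{E}|X(\cdot,t)|^\theta\le C$ for every $\theta\ge 0$ and is established independently of the present lemma. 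With that upgrade your dominated-convergence step goes through.
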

\begin{proof}
First, we define the linear operator $J: L^{2}(\Gamma,\mathbb{R})\to \mathbb{R}$ as following
\begin{equation*}\label{}
\begin{split}
\forall u \in L^{2}(\Gamma,\mathbb{R}^{3}), Ju:= \int_{\Gamma}u(x)dx.
\end{split}
\end{equation*}
By the properties of $e^{tA_{i}}$, we have
\begin{equation*}\label{}
\begin{split}
\forall u \in L^{2}(\Gamma,\mathbb{R}^{3}), J(e^{tA_{i}}u-u)=0,~~ Ju=Je^{tA_{i}}u, \forall i=1,2,3.
\end{split}
\end{equation*}
Moreover, for any $u\in L^{2}(\Gamma,\mathbb{R}^{3}), u\geq 0$ almost everywhere in $\Gamma$, we define $\|u\|_{1}=Ju$.
On the other hand, we get
\begin{equation*}\label{}
\begin{split}
&S(t)+I(t)+V(t)=e^{tA_{1}}S_{0}+e^{tA_{2}}I_{0}+e^{tA_{3}}V_{0}+\int_{0}^{t}e^{(t-s)A_{1}}S(s)I(s)dB_{1}(s)\\
&+\int_{0}^{t}e^{(t-s)A_{2}}S(s)I(s)dB_{2}(s){+\int_{0}^{t}e^{(t-s)A_{2}}V(s)I(s)dB_{4}(s)+\int_{0}^{t}e^{(t-s)A_{3}}V(s)I(s)dB_{3}(s).}
\end{split}
\end{equation*}
Then, applying to the operator $J$ to both sides and using the properties of stochastic integrals
\begin{equation*}\label{}
\begin{split}
&\|S(t)+I(t)+V(t)\|_{1}=\|S_{0}+I_{0}+V_{0}\|_{1}+\int_{0}^{t}J(e^{(t-s)A_{1}}S(s)I(s))dB_{1}(s)\\
&+\int_{0}^{t}J(e^{(t-s)A_{2}}S(s)I(s))dB_{2}(s){+\int_{0}^{t}J(e^{(t-s)A_{2}}V(s)I(s))dB_{4}(s)+\int_{0}^{t}J(e^{(t-s)A_{3}}V(s)I(s))dB_{3}(s).}
\end{split}
\end{equation*}
where  $J(e^{(t-s)A_{1}}S(s))$ in the stochastic integral is understood as the process with value in $L(L^{2}(\Gamma,\mathbb{R}^{3}),\mathbb{R}^{3})$, the space of linear operator from $L^{2}(\Gamma,\mathbb{R}^{3})\to \mathbb{R}^{3} $, for any $ u \in L^{2}(\Gamma,\mathbb{R}^{3})$ that is defined by
\begin{equation*}\label{}
\begin{split}
J(e^{(t-s)A_{1}}S(s)I(s))u:\int_{\Gamma}(e^{(t-s)A_{1}}S(s)I(s))(x)dx,
\end{split}
\end{equation*}
by similar methods for $J(e^{(t-s)A_{2}}S(s)I(s)),~J(e^{(t-s)A_{2}}V(s)I(s)),~\mbox{and}~ J(e^{(t-s)A_{3}}V(s)I(s))$. By taking the expectation to both sides and using the properties of stochastic integral, we have
\begin{equation*}\label{}
\begin{split}
\mathbb{E}\|S(t)+I(t)+V(t)\|_{1}=\|S_{0}+I_{0}+V_{0}\|_{1}.
\end{split}
\end{equation*}The proof is completed.
\end{proof}

%\begin{lemma}
%For any $u(t)\in \mathcal{U}_{ad}$, we have
%\begin{equation*}
%E\limsup\limits_{t \to \infty}\int_{\Gamma}[S(x,t)+I(x,t)+V(x,t)]dx<\infty.
%\end{equation*}
%\end{lemma}
%We give the invariant set $\mathcal{D}$ of the SIV system \eqref{1`q1}
%\begin{equation}
%\mathcal{D} =\{(S(x,t),I(x,t),V(x,t))\in X:\int_{\Gamma}(S(x,t)+I(x,t)+V(x,t))dx \leq 1\}.
%\end{equation}
%where $X=\mathcal{L}_{(0,x)}^{2}\times \mathcal{L}_{(0,x)}^{2}\times \mathcal{L}_{(0,x)}^{2}$ which denotes the space function.

\begin{lemma}\label{12334}
Let $(D_{1},D_{2},D_{3}):=(D_{1}(x),D_{2}(x),D_{3}(x))$, for all $\theta \geq 0$ and $0 < \kappa < 1$ satisfying $\kappa\theta < 1$, and $(u(x,t),u'(x,t))
\in \mathcal{U}_{ad}$, along with the corresponding trajectories $(X(x,t),~X'(x,t))$, there exists a constant $C = C(\theta, \kappa)$ such that
\begin{equation}\label{sog}
\sum_{i=1}^3\mathbb{E}\sup_{0\leq t \leq T}\ \int_{0}^{T}\int_{\Gamma}|X_{i}(x,t)-X'_{i}(x,t)|^{2}dxdt\leq C\sum_{i=1}^2d(u_{i}(x,t), u_{i}'(x,t))^{\kappa\theta}.
\end{equation}
\end{lemma}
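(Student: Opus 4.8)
The plan is to derive an energy (stability) estimate for the difference of the two trajectories and then trade the control--forcing term for the metric $d$ through a single, carefully calibrated application of H\"older's inequality, so that the surviving exponent is exactly $\kappa\theta$. Let $X=(S,I,V)$ and $X'=(S',I',V')$ be the solutions of the controlled system driven by the same Brownian motions, the same chain $\Lambda_t$ and the same initial datum, under the controls $u=(u_1,u_2)$ and $u'=(u_1',u_2')$ respectively, and set $Y_i:=X_i-X_i'$. Subtracting the two systems, $Y=(Y_1,Y_2,Y_3)$ solves a reaction--diffusion system whose coefficients are the differences of the originals; the only inhomogeneous (forcing) terms are $\mp(u_1-u_1')S'$ in the $S$- and $V$-equations and $\pm m(u_2-u_2')\,I'/(1+\eta I')$ in the $I$- and $V$-equations, since the controls do not enter the noise.

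First I would run the energy estimate. Applying It\^o's formula to $t\mapsto\sum_{i=1}^3\int_\Gamma|Y_i(x,t)|^2\,dx$ and integrating by parts, the Laplacian contributions are dissipative and nonpositive, exactly as in the bound \eqref{234cv}, so they may be discarded. Every remaining drift and noise term is bilinear and is split through $ab-a'b'=a(b-b')+(a-a')b'$, so each piece is either Lipschitz in $Y$ with a bounded coefficient --- here one uses $0\le u_i\le1$ and that $I\mapsto I/(1+\eta I)$ is bounded and globally Lipschitz --- or a state factor multiplying a component of $Y$; all are absorbed by $2ab\le\varepsilon a^2+\varepsilon^{-1}b^2$ together with the uniform moment bounds for $X'$ furnished by \autoref{mmy6} and \eqref{fgg564}. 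Taking the supremum in $t$, treating the stochastic integrals by the Burkholder--Davis--Gundy inequality, and invoking Gronwall's lemma yields
\begin{equation*}
\mathbb{E}\sup_{0\le t\le T}\sum_{i=1}^3\int_\Gamma|Y_i(x,t)|^2\,dx
\;\le\; C\,\mathbb{E}\int_0^T\!\!\int_\Gamma\Big(|S'|^2\,|u_1-u_1'|^2+|u_2-u_2'|^2\Big)\,dx\,dt ,
\end{equation*}
which after integration in $t$ dominates the left-hand side of \eqref{sog}.

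Next I would convert the right-hand side into $d$. Because the admissible controls take values in the bounded set $U$, the factors $|u_i-u_i'|^2$ are bounded and supported on $\{u_i\ne u_i'\}$, so $|u_i-u_i'|^2\le C\,\mathbf 1_{\{u_i\ne u_i'\}}$. I then apply H\"older's inequality in the measure $\mathbb{P}\otimes dt\otimes dx$ with the conjugate pair $\big(\tfrac1{1-\kappa\theta},\tfrac1{\kappa\theta}\big)$ --- both exponents exceed $1$ precisely because $0<\kappa\theta<1$ (the case $\theta=0$ being trivial) --- which gives
\begin{equation*}
\mathbb{E}\int_0^T\!\!\int_\Gamma|S'|^2\,\mathbf 1_{\{u_1\ne u_1'\}}\,dx\,dt
\;\le\;\Big(\mathbb{E}\int_0^T\!\!\int_\Gamma|S'|^{\frac{2}{1-\kappa\theta}}\,dx\,dt\Big)^{1-\kappa\theta}\,d(u_1,u_1')^{\kappa\theta},
\end{equation*}
and likewise for the $u_2$-term. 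The high-moment factor is finite on the finite horizon $[0,T]$ by the a priori bounds behind \eqref{fgg564}, so collecting the two control components produces $C\sum_{i=1}^2 d(u_i,u_i')^{\kappa\theta}$ and establishes \eqref{sog}.

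The main obstacle is that the SIV nonlinearities --- $\beta SI$, $(1-e)\beta VI$, the saturated treatment term, and the multiplicative noises $\sigma SI$ and $(1-e)\sigma VI$ --- are only \emph{locally} Lipschitz, so the Gronwall step cannot be closed by Lipschitz constants alone and must borrow integrability from the uniform moment estimates of \autoref{Section3}. The delicate bookkeeping is to verify that the exact moment $\tfrac{2}{1-\kappa\theta}$ of $S'$ demanded by the H\"older split is available, and here the hypothesis $\kappa\theta<1$ plays a double role: it makes $1/(\kappa\theta)$ an admissible H\"older exponent and keeps the required state moment finite. The dissipativity \eqref{234cv} of the diffusion is precisely what allows the gradient terms to be absorbed rather than obstruct the argument.
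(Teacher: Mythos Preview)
Your proposal is correct and follows essentially the same route as the paper: an It\^o energy estimate on $\sum_i\|Y_i\|_{L^2(\Gamma)}^2$, the dissipativity \eqref{234cv} to discard the Laplacian, the decomposition $u_1S-u_1'S'=u_1(S-S')+(u_1-u_1')S'\,\mathbf 1_{\{u\ne u'\}}$ for the control forcing, BDG plus Gronwall, and finally H\"older against the indicator to extract the power of $d$. One small correction: the high state moment $\mathbb{E}\int_0^T\!\int_\Gamma|S'|^{2/(1-\kappa\theta)}$ you invoke is not supplied by \eqref{fgg564} (that bound is restricted to $p<p_0\le1$) but by the paper's separate all-moment estimate $\mathbb{E}\sup_{0\le t\le T}|X_i|^{\theta}\le C$ for every $\theta\ge0$; and the paper applies H\"older in two stages (first reaching $d^{\kappa}$, then interpolating) rather than in your single calibrated split, but this is cosmetic.
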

\begin{proof}
We assume $\theta \geq 1$ and $\forall r >0$, use the elementary inequality, we can get
\begin{equation}\label{soh}
\begin{split}
&\mathbb{E}\sup_{0\leq t \leq r}\int_{0}^{T}\int_{\Gamma} |S(x,t)-S'(x,t)|^{2}dxdt\\
=&2[\int_{0}^{t}\int_{\Gamma}-\mu(x,\Lambda_{t})\langle S(x,t)-S'(x,t),S(x,t)-S'(x,t)\rangle dxds -\int_{0}^{t}\int_{\Gamma}\beta(x,\Lambda_{t}) I(x,t) \langle S(x,t) \\
&-S'(x,t),S(x,t)-S'(x,t)\rangle ds+\int_{0}^{t}\langle D_{1}\Delta (S(x,t)-S'(x,t)), S(x,t)-S'(x,t) \rangle dxds]\\
&+\int_{0}^{t}\int_{\Gamma}|\sigma(x,\Lambda_{t}) I (x,t)(S(x,t)-S'(x,t))|^{2}dxds-\int_{0}^{t}\int_{\Gamma}\langle \sigma(x,\Lambda_{t}) I(x,t)(S(x,t)-S'(x,t)), S(x,t)\\
&-S'(x,t)\rangle dxdB(t)-\int_{0}^{t}\int_{\Gamma}\langle u_{1}(x,t)S(x,t)-u'_{1}(x,t)S'(x,t), S(x,t)-S'(x,t) \rangle dxds.
\end{split}
\end{equation}
If $0 < \kappa < 1$, then $\theta \geq 1$, it follows from Cauchy-Schwartz's inequality that
\begin{equation}\label{spo}
\begin{split}
\mu(x,\Lambda_{t})\langle S(x,t)-S'(x,t), S(x,t)-S'(x,t)\rangle \leq C|S(x,t)-S'(x,t)|^{2},
%&\mathbb{E}\int_{0}^{r} \chi_{u_{1}(x,t)\neq u_{1}'(x,t)}(x,t) dt\\
%\leq & C\left(\mathbb{E}\int_{0}^{r} dt\right)^{1-\kappa\theta}
%\times\left(\mathbb{E}\int_{0}^{r} \chi_{u_{1}(x,t)\neq u_{1}'(x,t)}(x,t) dt\right)^{\kappa\theta}\\
%\leq & Cd(u_{1}(x,t), u_{1}'(x,t))^{\kappa\theta}.
\end{split}
\end{equation}
it follows from metric definition that
\begin{equation}
\langle D_{1}\Delta (S(x,t)-S'(x,t)), S(x,t)-S' (x,t)   \rangle ds =D_{1}\int_{0}^{t}\int_{\Gamma}-\| \nabla (S(x,t)-S'(x,t))  \|^{2}dxds,
\end{equation}
similarly,
\begin{equation}
\begin{split}
&\langle u_{1}(x,t)S(x,t)-u'_{1}(x,t)S'(x,t), S(x,t)-S'(x,t) \rangle \\
=&\langle u_{1}(x,t)S(x,t)-u_{1}(x,t)S'(x,t), S(x,t)-S'(x,t) \rangle \\
&+ \langle  (u(x,t) S'(x,t)-u'(x,t) S'(x,t))|_{\chi_{u(x,t)\neq u'(x,t)}}, S(x,t)-S'(x,t)  \rangle\\
\leq& C(|S(x,t)-S'(x,t)|^{2}+(|u'_{1}(x,t)-u_{1}(x,t)|^{2}+|S(x,t)-S'(x,t)|^{2})|_{\chi_{u(x,t)\neq u'(x,t)}})\\
\leq & C(|S(x,t)-S'(x,t)|^{2}+d(u(x,t),u'(x,t))).
\end{split}
\end{equation}
Because $I$ is bounded, we have
\begin{equation}
\begin{split}
&\mathbb{E} \sup[\int_{0}^{t}\int_{\Gamma}\langle \sigma (x,\Lambda_{t})I(x,t)(S(x,t)-S'(x,t)),S(x,t)-S'(x,t)   \rangle dxdB(t)]\\
\leq &C\mathbb{E}[\sup (|S(x,t)-S'(x,t)|^{2})^{\frac{1}{2}}(\int_{0}^{T}\int_{\Gamma}\| \sigma(x,\Lambda_{t}) I (x,t)(S(x,t)-S'(x,t))  \|_{2}^{2}dxdt)^{\frac{1}{2}}]\\
\leq &C\mathbb{E}[\sup |S(x,t)-S'(x,t)|^{2}+\int_{0}^{T}\int_{\Gamma}\|S(x,t)-S' (x,t)  \|_{2}^{2}dxdt],
\end{split}
\end{equation}
where
\begin{equation}
\beta(x,\Lambda_{t}) I(x,t) \langle S(x,t)-S'(x,t), S(x,t)-S'(x,t)\rangle \leq C(\| S(x,t)-S'(x,t)  \|_{2}^{2}),
\end{equation}
and
\begin{equation}
\mathbb{E} \sup\limits_{0 \leq t \leq T}|S(x,t)-S'(x,t)|^{2}\leq Cd(u(x,t)-u'(x,t))^{\kappa},
\end{equation}
combined with the Cauchy-Schwartz inequality, we get
\begin{equation}
\begin{split}
\mathbb{E}\sup\limits_{0\leq t \leq T}|S(x,t)-S'(x,t)|^{\theta} &\leq (\mathbb{E}|^{\frac{2}{2-\theta}})^{\frac{2-\theta}{2}}(\mathbb{E}(\sup\limits_{0\leq t \leq T}|S(x,t)-S'(x,t)|^{\theta})^{\frac{2}{\theta}})^{\frac{\theta}{2}}\\
&\leq (\mathbb{E}\sup\limits_{0 \leq t \leq T }|S(x,t)-S'(x,t)|^{2})^{\frac{\theta}{2}}\leq Cd(u(x,t),u'(x,t))^{\frac{\kappa \theta}{2}}.
\end{split}
\end{equation}
Similarly, we can get
\begin{equation}\label{slh}
\begin{split}
&\mathbb{E}\sup_{0\leq t \leq r}\ |I(x,t)-I'(x,t)|^{\theta}\leq Cd(u(x,t),u'(x,t))^{\frac{\kappa \theta}{2}},\\
&\mathbb{E}\sup_{0\leq t \leq r}\ |V(x,t)-V'(x,t)|^{\theta}\leq Cd(u(x,t),u'(x,t))^{\frac{\kappa \theta}{2}}].
\end{split}
\end{equation}
%Then, summing up \eqref{soh} and \eqref{slh}, we may obtain that
%\begin{equation*}
%\small
%\sum_{i=1}^3 \mathbb{E}\sup_{0\leq t \leq r}\ |X_{i}(x,t)-X'_{i}(x,t)|^{2\theta}\leq C \bigg[\int_{0}^{r} \sum_{i=1}^3 \mathbb{E} \sup_{0\leq t \leq s} |X_{i}(x,t)-X'_{i}(x,t)|^{2\theta}ds +  \sum_{i=1}^2d(u_{i}(x,t), u_{i}'(x,t))^{\kappa\theta}\bigg].
%\end{equation*}
%Now by considering $0 \leq \theta < 1$, with the help of Cauchy-Schwartz's inequality, we have
%\begin{equation}\label{sll}
%\small
%\begin{split}
%\sum_{i=1}^3 \mathbb{E}\sup_{0\leq t \leq r}\ |X_{i}(x,t)-X'_{i}(x,t)|^{2\theta}&\leq  \sum_{i=1}^3 \left[\mathbb{E}\sup_{0\leq t \leq r}\ |X_{i}(x,t)-X'_{i}(x,t)|^{2}\right]^{\theta}\\
%& \leq C \bigg[\int_{0}^{r} \sum_{i=1}^3 \mathbb{E} \sup_{0\leq t \leq s} |X_{i}(x,t)-X'_{i}(x,t)|^{2\theta}ds+ \sum_{i=1}^2d(u_{i}(x,t), u_{i}'(x,t))^{\kappa}\bigg]^{\theta} \\&\leq  C^{\theta} \bigg[\sum_{i=1}^2d(u_{i}(x,t), u_{i}'(x,t))^{\kappa\theta}\bigg].
%\end{split}
%\end{equation}
%Therefore, the result is true by making use of Gronwall's inequality.\\
The proof is completed.
\end{proof}

Next, we will draw into the adjoint equation as follows:
%\begin{equation}\label{adj}
%\begin{cases}
%dp_{1}(x,t)=-b_{1}(X(x,t),u(x,t),p(x),q(x,t))dt + q_{1}(x,t)dB(t),\\
%dp_{2}(x,t)=-b_{2}(X(x,t),u(x,t),p(x),q(x,t))dt + q_{2}(x,t)dB(t),\\
%dp_{3}(x,t)=-b_{3}(X(x,t),u(x,t),p(x),q(x,t))dt+ q_{3}(x,t)dB(t),\\
%p_{i}(x,T)=h_{X_{i}}(X(x,T)),\ i=1,2,3,\\
%p_{i}(x,t)=0,x \in\partial \Gamma,
%\end{cases}
%\end{equation}
%where
\begin{equation}\label{adj}
\begin{split}
dp_{1}(x,t)=&-A_{1}+\bigg(-(\mu(x,\Lambda_{t})+u_1(x,t))S(x,t)+\beta(x,\Lambda_{t}) I(x,t) \bigg) p_{1}(x,t)+D_{1}\Delta p_{1}(x,t)\\
&+\beta(x,\Lambda_{t}) I(x,t) p_{2}(x,t)+u_1(x,t)  p_{3}(x,t)-\sigma(x,\Lambda_{t})  I(x,t) q_{1}(x,t)\\
&+\sigma(x,\Lambda_{t}) I(x,t) q_{2}(x,t)+ q_{1}(x,t)dB(t),\\
dp_{2}(x,t)=&-A_{2}+(\alpha(x,\Lambda_{t})-\beta(x,\Lambda_{t}) S(x,t))p_{1}(x,t)+\bigg(\beta(x,\Lambda_{t}) S(x,t)+  (1-e)\beta(x,\Lambda_{t}) V(x,t)
 \\&-(\mu(x,\Lambda_{t})+\alpha(x,\Lambda_{t}))-\frac{m(x,\Lambda_{t})u_{2}(x,t)}{(1+\eta(x,\Lambda_{t}) I(x,t))^{2}}\bigg)p_{2}(x,t)+D_{2}\Delta p_{2}(x,t)
 \\
\end{split}
\end{equation}
\begin{equation*}
\begin{split}
&-\bigg((1-e)\beta(x,\Lambda_{t})    V(x,t)-\frac{m(x,\Lambda_{t})u_{2}(x,t)}{(1+\eta(x,\Lambda_{t}) I(x,t))^{2}}\bigg) p_{3}(x,t)-\sigma(x,\Lambda_{t})  S(x,t) q_{1}(x,t)\\
 &+(\sigma(x,\Lambda_{t})  S(x,t)+(1-e)\sigma (x,\Lambda_{t}) V(x,t))q_{2}(x,t) -(1-e)\sigma (x,\Lambda_{t}) V(x,t) q_{3}(x,t)+ q_{2}(x,t)dB(t),\\
dp_{3}(x,t)=&(1-e)\beta(x,\Lambda_{t}) I(x,t) p_{2}(x,t)-\bigg(\mu (x,\Lambda_{t}) +(1-e)\beta  (x,\Lambda_{t})   I(x,t)\bigg)p_{3}(x,t)\\
&+D_{3}\Delta p_{3}(x,t)+(1-e)\sigma(x,\Lambda_{t})  I(x,t) q_{2}(x,t)-(1-e)\sigma(x,\Lambda_{t}) I(x,t) q_{3}(x,t)+ q_{3}(x,t)dB(t),\\
p_{i}(x,T)=&h_{X_{i}}(X(x,T)),\ i=1,2,3,\\
p_{i}(x,t)=&0,x \in\partial \Gamma.
\end{split}
\end{equation*}

\begin{lemma}\label{45w}
For all $0 < \kappa < 1$ and $0 < \theta< 1$ satisfying
$(1 + \kappa)\theta < 2$, and $u(x,t),u'(x,t)\in \mathcal{U}_{ad}$, along with the corresponding trajectories $(X(x,t),X'(x,t))$, and the solution $(p(x),q(x,t)),(p'(x,t),q'(x,t))$ of corresponding adjoint equation \eqref{adj}, there exists a constant
$C = C(\kappa, \theta) > 0$ such that
\begin{equation}\label{egw}
\begin{split}
&\sum_{i=1}^3 \mathbb{E}\int_{0}^{T}\int_{\Gamma}\ |p_{i}(x,t)-p'_{i}(x,t)|^{\theta}dxdt + \sum_{i=1}^3 \mathbb{E}\int_{0}^{T}\int_{\Gamma} |q_{i}(x,t)-q'_{i}(x,t)|^{\theta}dxdt\\
\leq& C\sum_{i=1}^2d(u_{i}(x,t), u_{i}'(x,t))^\frac{{\kappa\theta}}{2}.
\end{split}
\end{equation}
\end{lemma}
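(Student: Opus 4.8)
The plan is to derive a backward stochastic PDE for the differences $\tilde p_i := p_i - p_i'$ and $\tilde q_i := q_i - q_i'$ by subtracting the two copies of \eqref{adj}, then to run a backward energy estimate and finish with a H\"older interpolation from $L^2$ down to $L^\theta$. Subtracting the two adjoint systems, each component solves an equation of the form
\begin{equation*}
-d\tilde p_i = \Big(D_i\Delta \tilde p_i + \sum_{j}a_{ij}(x,t)\tilde p_j + \sum_j b_{ij}(x,t)\tilde q_j + F_i(x,t)\Big)dt - \tilde q_i\,dB(t),
\end{equation*}
with terminal data $\tilde p_i(x,T)=h_{X_i}(X(x,T))-h_{X_i}(X'(x,T))$ and homogeneous boundary condition. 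The coefficients $a_{ij},b_{ij}$ are built from $\mu,\beta,\alpha,\sigma,m,\eta$ and the trajectory $X'$; they are bounded because the model parameters and the state variables (population proportions, controlled through \autoref{1234} and \eqref{fgg564}) are bounded. The genuinely inhomogeneous forcing $F_i$ collects the terms in which a \emph{difference} of states multiplies an \emph{old} adjoint variable, i.e. schematically $F_i \sim (X-X')\,(p'+q')$, together with a contribution carried by $u-u'$ supported on $\{u\neq u'\}$.

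First I would apply It\^o's formula to $\sum_{i=1}^3\int_\Gamma|\tilde p_i(t)|^2dx$ on $[t,T]$ and take expectations. Integration by parts turns each diffusion term into $-D_i\|\nabla\tilde p_i\|^2\le 0$, which is discarded, the stochastic integral has zero mean, and $\int_\Gamma|\tilde q_i|^2dx$ appears with a favourable sign on the left. Using Young's inequality to absorb $\langle\tilde p,b\tilde q\rangle$ and $2\langle\tilde p,F\rangle\le |\tilde p|^2+|F|^2$, together with the boundedness of $a_{ij},b_{ij}$, I obtain the backward inequality
\begin{equation*}
\mathbb{E}\int_\Gamma|\tilde p(t)|^2dx+\mathbb{E}\int_t^T\!\!\int_\Gamma|\tilde q|^2dxds \le \mathbb{E}\int_\Gamma|\tilde p(T)|^2dx+C\,\mathbb{E}\int_t^T\!\!\int_\Gamma|\tilde p|^2dxds+\mathbb{E}\int_t^T\!\!\int_\Gamma|F|^2dxds.
\end{equation*}
The terminal term is controlled by the Lipschitz continuity of $h_{X_i}$ and \autoref{12334}, giving $\mathbb{E}\int_\Gamma|\tilde p(T)|^2dx\le C\,\mathbb{E}\int_\Gamma|X(T)-X'(T)|^2dx$. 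A backward Gronwall argument then yields
\begin{equation*}
\mathbb{E}\int_0^T\!\!\int_\Gamma\big(|\tilde p|^2+|\tilde q|^2\big)dxds\le C\Big(\mathbb{E}\int_\Gamma|X(T)-X'(T)|^2dx+\mathbb{E}\int_0^T\!\!\int_\Gamma|F|^2dxds\Big).
\end{equation*}

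The crux is the forcing integral $\mathbb{E}\int_0^T\int_\Gamma|F|^2dxds$, since $F$ pairs the state difference with the only-square-integrable adjoint $(p',q')$. I would split it by H\"older in $(\omega,x,t)$, writing $|X-X'|^2|p'|^2$ as a product and distributing the two factors over conjugate exponents; the state factor is estimated by \autoref{12334}, while the adjoint factor is absorbed by uniform higher-moment estimates for $(p',q')$ obtained from the standard $L^m$ theory of the linear backward equation \eqref{adj}. This is the step where the admissible range of parameters is consumed: the H\"older balancing forces the restriction $(1+\kappa)\theta<2$ and leaves a positive power of $d(u,u')$ on the right. Finally, since $\theta<1$, one more application of H\"older in $(\omega,x,t)$ gives $\mathbb{E}\int_0^T\int_\Gamma|\tilde p|^\theta dxds\le C\big(\mathbb{E}\int_0^T\int_\Gamma|\tilde p|^2dxds\big)^{\theta/2}$, and likewise for $\tilde q$, converting the $L^2$ bound into the claimed $L^\theta$ bound with exponent $\kappa\theta/2$. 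I expect the principal obstacle to be exactly this decoupling of the forcing term: controlling products of the state difference with the merely square-integrable adjoint $(p',q')$ while keeping the power of $d(u,u')$ strictly positive is precisely what dictates the hypotheses $0<\theta<1$ and $(1+\kappa)\theta<2$.
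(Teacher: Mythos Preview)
Your argument is coherent, but it follows a different route from the paper's. After writing the linear backward system for $(\hat p_i,\hat q_i):=(p_i-p_i',\,q_i-q_i')$ with inhomogeneity $\hat f_i\sim (X-X')\,(p',q')$, the paper does \emph{not} run an $L^2$ energy estimate and then interpolate down. Instead it invokes the classical duality device from the near-optimal control literature: it introduces an auxiliary \emph{forward} SDE for a process $\phi=(\phi_1,\phi_2,\phi_3)$ whose linear coefficients are the transposes of those in the $\hat p$-equation and whose extra forcing terms are exactly $|\hat p_i|^{\theta-1}\mathrm{sgn}(\hat p_i)$ in the drift and $|\hat q_i|^{\theta-1}\mathrm{sgn}(\hat q_i)$ in the diffusion. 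Applying It\^o's formula to $\sum_i\langle\phi_i,\hat p_i\rangle$ on $[0,T]$ makes the linear pieces cancel pairwise and produces $\mathbb{E}\int_0^T\!\int_\Gamma\big(\sum_i|\hat p_i|^\theta+\sum_i|\hat q_i|^\theta\big)\,dx\,dt$ directly, balanced against $\mathbb{E}\int_0^T\!\int_\Gamma\langle\phi,\hat f\rangle\,dx\,dt$ and boundary terms; the latter are then controlled through the moment bounds of \autoref{1234} and the $L^2$ adjoint bound of \autoref{sib}.

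The trade-off between the two routes is this. Your direct energy approach is conceptually simpler (no auxiliary process, no singular $|\cdot|^{\theta-1}$ forcing to justify), but the H\"older splitting of $\mathbb{E}\int|X-X'|^2|p'+q'|^2$ needs $L^m$ bounds on $(p',q')$ for some $m>2$, which the paper never states---\autoref{sib} only gives $L^2$. The paper's duality approach targets the $L^\theta$ norm immediately and shifts the moment burden onto the \emph{forward} variable $\phi$, for which high-moment estimates are the routine \autoref{1234}-type bounds; this is why the representation trick is the standard tool here. If you can upgrade \autoref{sib} to arbitrary moments (reasonable for a linear BSDE with bounded coefficients and bounded terminal data), your argument closes with the same exponent $\kappa\theta/2$; otherwise that step is the gap.
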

\begin{proof}
Let $\widehat{p}_{i}(x,t) \equiv p_{i}(x,t)-p'_{i}(x,t), ~\widehat{q}_{j}(x,t) \equiv q_{j}(x,t)-q'_{j}(x,t)(i,j=1,2,3)$. From the adjoint equation \eqref{adj}, we have
\begin{small}
\begin{equation*}\label{xgw}
\begin{cases}
\begin{split}
d \widehat{p}_{1}(x,t)=&-\bigg[A_{1}-\bigg(\mu(x,\Lambda_{t})+u_1(x,t)+ \beta(x,\Lambda_{t}) I(x,t)\bigg)\widehat{p}_{1}(x,t)+  D_{1}\Delta \widehat{p}_{1}(x,t)+   \beta(x,\Lambda_{t}) I(x,t)\widehat{p}_{2}(x,t)\\
&+u_1(x,t) \widehat{p}_{3}(x,t)-\sigma(x,\Lambda_{t}) I(x,t)\widehat{q}_{1}(x,t)+\sigma(x,\Lambda_{t}) I (x,t)  \widehat{q}_{2}(x,t)+\widehat{f}_{1}(x,t)\bigg]dt+ \widehat{q}_{1}(x,t)dB(t),\\
d \widehat{p}_{2}(x,t)=&-\bigg[A_{2}+(\alpha(x,\Lambda_{t}) -\beta(x,\Lambda_{t}) S(x,t))\widehat{p}_{1}(x,t)+\bigg(\beta(x,\Lambda_{t}) S(x,t)-(1-e)\beta(x,\Lambda_{t}) V(x,t)\\
&-(\mu(x,\Lambda_{t})+\alpha(x,\Lambda_{t}))-\frac{m(x,\Lambda_{t})u_{2}(x,t)I(x,t)}{1+\eta(x,\Lambda_{t}) I(x,t)}\bigg)\widehat{p}_{2}(x,t)+ D_{2}\Delta \widehat{p}_{2}(x,t)-\bigg((1-e)\beta(x,\Lambda_{t}) V(x,t)\\
&-\frac{m(x,\Lambda_{t})u_{2}(x,t)I(x,t)}{1+\eta(x,\Lambda_{t}) I(x,t)} \bigg)\widehat{p}_{3}(x,t)-\sigma (x,\Lambda_{t})S(x,t)\widehat{q}_{1}(x,t)+(\sigma(x,\Lambda_{t}) S(x,t)\\
&+(1-e)\sigma(x,\Lambda_{t})   V(x,t))\widehat{q}_{2}(x,t)-((1-e)\sigma (x,\Lambda_{t})V(x,t))\widehat{q}_{3}(x,t)+\widehat{f}_{2}(x,t)   \bigg]dt+ \widehat{q}_{2}(x,t)dB(t),\\
d \widehat{p}_{3}(x,t)=& -\bigg[((1-e)\beta (x,\Lambda_{t})I(x,t))\widehat{p}_{2}(x,t)-(\mu(x,\Lambda_{t})   +(1-e)\beta(x,\Lambda_{t})    I (x,t)  \widehat{p}_{3}(x,t)+ D_{3}\Delta \widehat{p}_{3}(x,t) \bigg]dt\\
&+((1-e)\sigma (x,\Lambda_{t})I(x,t))\widehat{q}_{2}(x,t)+((1-e)   \sigma(x,\Lambda_{t}) I(x,t))\widehat{q}_{3}(x,t)+    \widehat{f}_{3}(x,t)   +\widehat{q}_{3}(x,t)dB(t).\\
\end{split}
\end{cases}
\end{equation*}
\end{small}
where
\begin{footnotesize}
\begin{equation*}
\begin{split}
\widehat{f}_{1}(x,t)=&\beta(x,\Lambda_{t})     (I(x,t)-I'(x,t))(p'_{2}(x,t)-p'_{1}(x,t))+\sigma(x,\Lambda_{t})(I(x,t)-I'(x,t))(q'_{2}(x,t)
-q'_{1}(x,t))    ,\\
\widehat{f}_{2}(x,t)=&\beta(x,\Lambda_{t})     (S(x,t)-S'(x,t))(p'_{2}(x,t)-p'_{1}(x,t))+(1-e)\beta(x,\Lambda_{t})(V(x,t)-V'(x,t))(p'_{3}(x,t)
-p'_{2}(x,t))   \\&
+m(x,\Lambda_{t})x\bigg(\frac{u_{2}(x,t)}{(1+\eta(x,\Lambda_{t}) I(x,t))^{2}}-\frac{u_{2}'(x,t)}{(1+\eta(x,\Lambda_{t}) I'(x,t))^{2}}\bigg)(p'_{3}(x,t)-p'_{2}(x,t))+\sigma(x,\Lambda_{t})(S(x,t)\\&
-S'(x,t))(q'_{2}(x,t)-q'_{1}(x,t))+(1-e)\sigma(x,\Lambda_{t})(V(x,t)-V'(x,t))(q'_{3}(x,t)
-q'_{2}(x,t))    ,\\
\widehat{f}_{3}(x,t)=&(1-e)\beta(x,\Lambda_{t})(I(x,t)-I'(x,t))(p'_{2}(x,t)-p'_{3}(x,t)) +(1-e)\sigma(x,\Lambda_{t})(I(x,t)-I'(x,t))(q'_{2}(x,t)-q'_{3}(x,t)).
\end{split}
\end{equation*}
\end{footnotesize}
We assume the solution of the following stochastic differential equation is
$\phi(x,t):= (\phi_{1}(x,t),\phi_{2}(x,t),\phi_{3}(x,t))^{\top}$,
\begin{small}
\begin{equation*}
\begin{cases}
\begin{split}
d \phi_{1}(x,t)=&\bigg[(\mu(x,\Lambda_{t})+u_1(x,t) + \beta(x,\Lambda_{t}) I(x,t) )\phi_{1}(x,t)+D_{1}\Delta \phi_{1}(x,t)+(\alpha(x,\Lambda_{t})
\\&-\beta(x,\Lambda_{t}) S(x,t))\phi_{2}(x,t)+|\widehat{p}_{1}(x,t)|^{\theta-1}sgn(\widehat{p}_{1}(x,t))\bigg]dt-\bigg[(\sigma(x,\Lambda_{t}) I(x,t))\phi_{1}(x,t)
\\&+(\sigma(x,\Lambda_{t}) S(x,t))\phi_{2}(x,t)+|\widehat{q}_{1}(x,t)|^{\theta-1}sgn(\widehat{q}_{1}(x,t))\bigg]dB(t),\\
d \phi_{2}(x,t)=&\bigg[\beta(x,\Lambda_{t}) I(x,t)\phi_{1}(x,t)\bigg(\beta(x,\Lambda_{t}) S(x,t)-(1-e)\beta(x,\Lambda_{t}) V(x,t)-(\mu(x,\Lambda_{t})+\alpha(x,\Lambda_{t}))
\\&-\frac{m(x,\Lambda_{t})u_{2}(x,t)I(x,t)}{1+\eta(x,\Lambda_{t}) I(x,t)}\bigg)\phi_{2}(x,t)+D_{2}\Delta \phi_{2}
+(1-e)\beta (x,\Lambda_{t})   I(x,t) \phi_{3}(x,t)
\\&+|\widehat{p}_{2}(x,t)|^{\theta(x,t)-1}sgn(\widehat{p}_{2}(x,t))\bigg]dt+\bigg[\sigma(x,\Lambda_{t}) I(x,t) \phi_{3}(x,t)+(\sigma(x,\Lambda_{t}) S(x,t)\\
&+(1-e)\sigma(x,\Lambda_{t}) V(x,t))\phi_{2}(x,t)
%\end{split}
%\end{cases}
%\end{equation}
%\begin{equation}
%\begin{cases}
%\begin{split}
+(1-e)\sigma(x,\Lambda_{t}) I(x,t)\phi_{3}(x,t)+|\widehat{q_{2}}|^{\theta-1}sgn(\widehat{q}_{2}(x,t)\bigg] dB(t),\\
d \phi_{3}(x,t)= & \bigg[u_1 \phi_{1}(x,t)-\bigg((1-e)\beta(x,\Lambda_{t}) V(x,t)-\frac{m(x,\Lambda_{t})u_{2}(x,t)I(x,t)}{1+\eta(x,\Lambda_{t}) I(x,t)}\bigg)\phi_{2}(x,t)
-(\mu(x,\Lambda_{t})+(1-\\
&e)\beta (x,\Lambda_{t})I(x,t))\phi_{3}(x,t)+D_{3}\Delta \phi_{3}(x,t)+|\widehat{p}_{3}(x,t)|^{\theta-1}sgn(\widehat{p}_{3}(x,t))\bigg]dt-\bigg(((1-e)\sigma (x,\Lambda_{t})\times\\
&V(x,t))\phi_{1}(x,t)+((1-e)\sigma (x,\Lambda_{t})I(x,t))    \phi_{2}(x,t)+((1-e)\sigma (x,\Lambda_{t})I(x,t))    \phi_{2}(x,t)\\
&+|\widehat{q}_{3}(x,t)|^{\theta-1}sgn(\widehat{q}_{3}(x,t))\bigg)dB(t),
\end{split}
\end{cases}
\end{equation*}
\end{small}
where $sgn(\cdot)$ is a symbolic function.

%\subsection{Some priori estimates of the susceptible, infected and recovered}\label{sec-estimats1}
\begin{lemma}\label{1234}
For any $\theta \geq 0$ and $u(x,t)\in \mathcal{U}_{ad}$, we have
\begin{equation}\label{s1og}
\mathbb{E}\sup_{0\leq t \leq T}\ |S(x,t)|^{\theta}\leq C, \ \mathbb{E}\sup_{0\leq t \leq T}\ |I(x,t)|^{\theta}\leq C, \ \mathbb{E}\sup_{0\leq t \leq T}\ |V(x,t)|^{\theta}\leq C,
\end{equation}
where $C$ is a constant that depends only on $\theta$.
\end{lemma}
\begin{proof}
Using It\^o's formula for $|S(x,t)|^{\theta}+|I(x,t)|^{\theta}+|V(x,t)|^{\theta}$, one has
%\begin{equation}
%\begin{split}
%|S(x,t)|^{\alpha}&+|I(x,t)|^{\alpha}+|V(x,t)|^{\alpha}=|S(x,0)|^{\alpha}+|I(x,0)|^{\alpha}+|V(x,0)|^{\alpha}\\
%&+\int_{0}^{t}\bigg(\theta|S|^{\theta -1}(1-p)\mu+\theta|S|^{\theta -1}\alpha I-\theta|S|^{\theta-1}(\mu+u_{1})S-\theta|S|^{\theta-1}\beta SI\\
%&+\theta|S|^{\theta-1}D_{1}\Delta S+\frac{1}{2}\theta (\theta-1)|S|^{\theta-2}\sigma^{2}S^{2}I^{2}+\theta|I|^{\theta-1}\beta SI\\
%&+\theta |I|^{\theta -1}(1-e)\beta VI-\theta|I|^{\theta-1}(\mu+\alpha)I+\theta|I|^{\theta-1}D_{2}\Delta I -\theta|I|^{\theta-1}\frac{mu_{2}I}{1+\eta I}\\
%&+\frac{1}{2}\theta (\theta-1)|I|^{\theta-2}\sigma^{2}S^{2}I^{2} +\frac{1}{2}\theta (\theta-1)|I|^{\theta-2}(1-e)^{2}V^{2}I^{2}\\
%&+\theta|V|^{\theta-1}p\mu+\theta|V|^{\theta-1}u_{1}S-\theta|V|^{\theta-1}\mu V-\theta|V|^{\theta-1}(1-e)\beta V I+\theta|V|^{\theta-1}D_{3}\Delta V\\
%&+\theta|V|^{\theta-1}\frac{mu_{2}I}{1+\eta I}+\frac{1}{2}\theta (\theta-1)|V|^{\theta}(1-e)^{2}V^{2}I^{2} \bigg)dt\\
%&-(\theta|S|^{\theta-1}\sigma S I-\theta|I|^{\theta-1}(\sigma S I+(1-e)\sigma V I)+\theta|V|^{\theta-1}(1-e)\sigma V I)dB(t),
%\end{split}
%\end{equation}
\begin{footnotesize}
\begin{equation}\label{[po}
\begin{split}
&|S(x,t)|^{\theta}+|I(x,t)|^{\theta}+|V(x,t)|^{\theta}\\
\leq&|S(x,0)|^{\theta}+|I(x,0)|^{\theta}+|V(x,0)|^{\theta}+\int_{0}^{t}\int_{\Gamma}\bigg(\theta|S(x,t)|^{\theta -1}(1-p(x,\Lambda_{t}))b(x,\Lambda_{t})+\theta|S(x,t)|^{\theta -1}\alpha(x,\Lambda_{t}) I(x,t)\\
&-\theta|S(x,t)|^{\theta}(\mu(x,\Lambda_{t})+u_{1}(x,t))-\theta|S(x,t)|^{\theta}\beta(x,\Lambda_{t}) I(x,t)+\theta|S(x,t)|^{\theta-2}D_{1}\|\nabla S(x,t)\|^{2}\\
&+\frac{1}{2}\theta (\theta-1)|S(x,t)|^{\theta}\sigma^{2}(x,\Lambda_{t})I^{2}(x,t)+\theta|I(x,t)|^{\theta}\beta(x,\Lambda_{t}) S(x,t)+\theta |I(x,t)|^{\theta}(1-e)\beta(x,\Lambda_{t}) V(x,t)\\
&-\theta|I(x,t)|^{\theta}(\mu(x,\Lambda_{t})+\alpha(x,\Lambda_{t}))+\theta|I(x,t)|^{\theta-2}D_{2}\|\Delta I(x,t)\|^{2} -\theta|I(x,t)|^{\theta}\frac{m(x,\Lambda_{t})u_{2}(x,t)}{1+\eta(x,\Lambda_{t}) I(x,t)}\\
&+\frac{1}{2}\theta (\theta-1)|I(x,t)|^{\theta}\sigma^{2}(x,\Lambda_{t})S^{2}(x,t)+\frac{1}{2}\theta (\theta-1)|I(x,t)|^{\theta}(1-e)^{2}V^{2}(x,t)+\theta|V(x,t)|^{\theta-1}p(x,\Lambda_{t})b(x,\Lambda_{t})\\
& +\theta|V(x,t)|^{\theta-1}u_{1}(x,t)S(x,t)-\theta|V(x,t)|^{\theta}\mu(x,\Lambda_{t})-\theta|V(x,t)|^{\theta}(1-e)\beta(x,\Lambda_{t}) I(x,t)+\theta|V(x,t)|^{\theta-2}D_{3}\|\nabla V(x,t)\|^{2}\\
&+\theta|V(x,t)|^{\theta-1}\frac{m(x,\Lambda_{t})u_{2}(x,t)I(x,t)}{1+\eta(x,\Lambda_{t}) I(x,t)}+\frac{1}{2}\theta (\theta-1)|V(x,t)|^{\theta}(1-e)^{2}I^{2}(x,t) \bigg)dxdt-(\theta|S(x,t)|^{\theta}\sigma (x,\Lambda_{t})I(x,t)\\
&-\theta|I(x,t)|^{\theta}(\sigma(x,\Lambda_{t}) S(x,t)+(1-e)\sigma(x,\Lambda_{t}) V(x,t) I(x,t))+\theta|V(x,t)|^{\theta}(1-e)\sigma(x,\Lambda_{t}) I(x,t))dB(t)\\
\leq& |S(x,0)|^{\theta}+|I(x,0)|^{\theta}+|V(x,0)|^{\theta}+C\mathbb{E}\sup \int_{0}^{t}\int_{\Gamma}(|S(x,s)|^{\theta}+|I(x,s)|^{\theta}+|V(x,s)|^{\theta})dxds\\
&-\mathbb{E}\sup\int_{0}^{t}\int_{\Gamma}(\theta|S(x,t)|^{\theta}\sigma (x,\Lambda_{t})I(x,t)-\theta|I(x,t)|^{\theta}(\sigma(x,\Lambda_{t}) S(x,t)\\
&+(1-e)\sigma(x,\Lambda_{t}) V(x,t) I(x,t))+\theta|V(x,t)|^{\theta}(1-e)\sigma(x,\Lambda_{t}) I(x,t))dxdB(x,t).
\end{split}
\end{equation}
\end{footnotesize}
We take the upper bound and the expectation of \eqref{[po}, by the Burkholder-Davis-Gundy inequality, existence a constant $C$ such that
\begin{equation}
\begin{split}
&\mathbb{E}\sup\int_{0}^{t}\int_{\Gamma}(\theta|S(x,t)|^{\theta}\sigma(x,\Lambda_{t}) I(x,t)-\theta|I(x,t)|^{\theta}(\sigma(x,\Lambda_{t}) S(x,t)\\
&+(1-e)\sigma(x,\Lambda_{t}) V(x,t) I(x,t))+\theta|V(x,t)|^{\theta}(1-e)\sigma(x,\Lambda_{t}) I(x,t))dxdB(x,t)\\
=&\theta \sigma(x,\Lambda_{t})(|S(x,t)|^{\theta}I(x,t)+|I(x,t)|^{\theta}S(x,t))+\theta(1-e)\sigma(x,\Lambda_{t})(|I(x,t)|^{\theta}V(x,t)-|V(x,t)|^{\theta}I(x,t))\\
\leq & C[|S(x,t)|^{\theta}+|I(x,t)|^{\theta}+|V(x,t)|^{\theta}],
\end{split}
\end{equation}
hence, we have
\begin{equation}
\begin{split}
\mathbb{E}\sup\limits_{0\leq t \leq T}(|S(x,t)|^{\theta}&+|I(x,t)|^{\theta}+|V(x,t)|^{\theta})\leq 2\mathbb{E}(|S(x,0)|^{\theta}+|I(x,0)|^{\theta}+|V(x,0)|^{\theta})\\
&+C\mathbb{E}\sup\int_{0}^{t}\int_{\Gamma}(|S(x,s)|^{\theta}+|I(x,s)|^{\theta}+|V(x,s)|^{\theta})dxds.
\end{split}
\end{equation}
An application of the Cauchy-Schwartz inequality yields that
\begin{equation}
\begin{split}
&\mathbb{E}\sup\limits_{0\leq t \leq T}(|S(x,t)|^{\theta}+|I(x,t)|^{\theta}+|V(x,t)|^{\theta})\\
\leq &(\mathbb{E}1^{\frac{2}{2-\theta}})^{1-\frac{\theta}{2}}(\mathbb{E}(\sup(|S(x,t)|^{\theta}+|I(x,t)|^{\theta}+|V(x,t)|^{\theta}))^{\frac{2}{\theta}})^{\frac{\theta}{2}}ds\\
\leq & (\mathbb{E}\sup\limits_{0\leq t \leq T}(|S(x,t)|^{2}+|I(x,t)|^{2}+|V(x,t)|^{2})^{\frac{\theta}{2}})\leq C.
\end{split}
\end{equation}
Similarly, one has
\begin{equation*}
\mathbb{E}\sup_{0\leq t \leq T}\ |I(x,t)|^{\theta}\leq C,
\quad
\mathbb{E}\sup_{0\leq t \leq T}\ |V(x,t)|^{\theta}\leq C,
\end{equation*}
The proof is complete.
\end{proof}

\begin{lemma}\label{sib}
For any $u(x,t)$, $u'(x,t)\in \mathcal{U}_{ad}$, we have
\begin{equation}\label{stw}
\sum_{i=1}^3 \mathbb{E}\sup_{0\leq t \leq T}\ \int_{0}^{T}\int_{\Gamma}|p_{i}(x,t)|^{2}dxdt+\sum_{i=1}^3 \mathbb{E}\ \int_{0}^{T}\int_{\Gamma}\ |q_{i}(x,t)|^{2}dxdt \leq C,
\end{equation}
where $C$ is a constant.
\end{lemma}
We define a metric on the admissible control domain $\mathcal{U}_{ad}[0, T]$ as follows:
\begin{equation}\label{syw}
d(u(x,t), u'(x,t)) = \mathbb{E}[mes\{x\times t \in \Gamma\times [0; T] : u(x,t)\neq u'(x,t)\}] \ \forall u(x,t), u'(x,t) \in \mathcal{U}_{ad},
\end{equation}
where $mes$ represents Lebesgue measure. Since $U$ is closed, it can be shown  that $\mathcal{U}_{ad}$ is a complete metric space under $d$.
%\begin{lemma}\label{s2g}
%(Ekeland's principle) \cite{wsd}. Let $(Q, d)$ be a complete metric space and $F(\cdot): Q \rightarrow \mathbb{R}$ be a lower-semicontinuous and bounded from below. For any $\varepsilon > 0$, we assume that $u^{\varepsilon}(\cdot)\in Q$ satisfies
%\begin{equation*}
%F(u^{\varepsilon}(\cdot))\leq \inf_{u(\cdot)\in Q}F(u(\cdot))+\varepsilon.
%\end{equation*}
%Then there is $u^{\lambda}(\cdot)\in Q$ such that for all $\lambda > 0$ and $u(\cdot) \in Q$,
%\begin{equation*}
%F(u^{\lambda}(\cdot)) \leq F(u^{\varepsilon}(\cdot)),\  d(u^{\lambda}(\cdot), u^{\varepsilon}(\cdot)) \leq \lambda,\  and\ F(u^{\lambda}(\cdot)) \leq F(u(\cdot))+ \frac{\varepsilon}{\lambda}d(u^{\lambda}(\cdot), u^{\varepsilon}(\cdot)).
%\end{equation*}
%\end{lemma}

According to the definition of $L$ and  \autoref{sib}, the above equation admits a unique solution. Combining  with the above two estimates, the desired result then holds immediately.
\end{proof}
\subsection{Necessary conditions for near-optimal controls}\label{sec-controols}
\begin{theorem}\label{thm-stationary1}
Let $(p^{\varepsilon}(x,t), q^{\varepsilon}(x,t))$ is the solution of the adjoint equation \eqref{adj} under the control $u^{\varepsilon}(x,t)$. Then, there exists a constant $C$ such that for any
$\theta\in[0, 1),~\varepsilon > 0$ and any $\varepsilon$-optimal pair $(X^{\varepsilon}(x,t), u^{\varepsilon}(x,t))$, it holds that
\begin{footnotesize}
\begin{equation*}\label{fmm9w}
\begin{split}
\min_{u(x,t)\in\mathcal{U}_{ad}}\mathbb{E}\int_{0}^{T}\int_{\Gamma}\bigg( [u_{1}(x,t)-u_{1}^{\varepsilon}(x,t)]S^{\varepsilon}(x,t)(p_{3}^{\varepsilon}(x,t)-p_{1}^{\varepsilon}(x,t))+\frac{1}{2}(\tau_{1} [u_{1}(x,t)-u_{1}^{\varepsilon}(x,t)]^{2})\bigg)dxdt
\geq -C\varepsilon^{\frac{\theta}{3}}.
\end{split}
\end{equation*}
\end{footnotesize}
and
\begin{footnotesize}
\begin{equation*}\label{fmm9w}
\begin{split}
\min_{u(x,t)\in\mathcal{U}_{ad}}\mathbb{E}\int_{0}^{T}\int_{\Gamma}\bigg(& \frac{m(x,\Lambda_{t})[u_{2}(x,t)-u_{2}^{\varepsilon}(x,t)]I^{\varepsilon}(x,t)}{1+\eta(x,\Lambda_{t})
I^{\varepsilon}(x,t)}(p_{3}^{\varepsilon}(x,t)-p_{2}^{\varepsilon}(x,t))-\frac{1}{2}(\tau_{2} [u_{2}(x,t)-u_{2}^{\varepsilon}]^{2}(x,t))\bigg)dxdt
\geq -C\varepsilon^{\frac{\theta}{3}}.
\end{split}
\end{equation*}
\end{footnotesize}
\end{theorem}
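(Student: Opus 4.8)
The plan is to prove both inequalities at once as a single near-minimum-of-the-Hamiltonian statement, via Ekeland's variational principle, a spike (needle) variation, and a continuity-transfer argument. Introduce the Hamiltonian $\mathcal{H}(x,t,X,u,p,q)=\langle b(x,t,X,u),p\rangle+\langle\sigma(x,t,X),q\rangle+L(x,t,X,u)$, whose $u_1$-dependent part reduces to $u_1 S(p_3-p_1)+\frac12\tau_1u_1^2$ and whose $u_2$-dependent part reduces to $\frac{mu_2I}{1+\eta I}(p_3-p_2)+\frac12\tau_2u_2^2$. With this notation the two assertions say exactly that the $\varepsilon$-optimal control $u^\varepsilon$ nearly minimizes $\mathbb{E}\int_0^T\int_\Gamma\mathcal{H}\,dx\,dt$ in each control component, with error $O(\varepsilon^{\theta/3})$.

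First I would apply Ekeland's variational principle on the complete metric space $(\mathcal{U}_{ad},d)$ from \eqref{syw}; continuity of $u\mapsto J(u)$ follows from the continuous-dependence estimates of \autoref{12334} and \autoref{45w}. Because $u^\varepsilon$ is $\varepsilon$-optimal, Ekeland with radius $\varepsilon^{1/3}$ produces $\tilde u^\varepsilon\in\mathcal{U}_{ad}$ with $d(u^\varepsilon,\tilde u^\varepsilon)\le\varepsilon^{1/3}$ that \emph{exactly} minimizes the penalized functional $J(u)+\varepsilon^{2/3}d(u,\tilde u^\varepsilon)$ over $\mathcal{U}_{ad}$.

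Next I would exploit this exact minimality by a spike variation. Fix $u\in\mathcal{U}_{ad}$ and $\rho\in(0,1)$, pick a measurable $E_\rho\subseteq\Gamma\times[0,T]$ with $\mathbb{E}[\mathrm{mes}(E_\rho)]=\rho$, and set $u^\rho=u\,\mathbf 1_{E_\rho}+\tilde u^\varepsilon\,\mathbf 1_{E_\rho^c}$, so $d(u^\rho,\tilde u^\varepsilon)\le\rho$. Minimality gives $0\le[J(u^\rho)-J(\tilde u^\varepsilon)]+\varepsilon^{2/3}\rho$, and the duality between the first-order variational equation for the state and the adjoint system \eqref{adj} expands $J(u^\rho)-J(\tilde u^\varepsilon)=\mathbb{E}\int_{E_\rho}[\mathcal{H}(\cdot,u)-\mathcal{H}(\cdot,\tilde u^\varepsilon)]\,dx\,dt+o(\rho)$, the remainder being $o(\rho)$ by the uniform bounds of \autoref{sib} and the state moment bounds. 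Dividing by $\rho$ and letting $\rho\to0$ yields $\mathbb{E}\int_0^T\int_\Gamma[\mathcal{H}(\cdot,u)-\mathcal{H}(\cdot,\tilde u^\varepsilon)]\,dx\,dt\ge-C\varepsilon^{2/3}$ for every admissible $u$.

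It remains to transfer this from $\tilde u^\varepsilon$ to $u^\varepsilon$. Combining $d(u^\varepsilon,\tilde u^\varepsilon)\le\varepsilon^{1/3}$ with the state bound $\mathbb{E}\int\int|X^\varepsilon-\tilde X^\varepsilon|^2\le C\,d(u^\varepsilon,\tilde u^\varepsilon)^{\kappa\theta}$ of \autoref{12334} and the adjoint bound $\mathbb{E}\int\int(|p^\varepsilon-\tilde p^\varepsilon|^\theta+|q^\varepsilon-\tilde q^\varepsilon|^\theta)\le C\,d(u^\varepsilon,\tilde u^\varepsilon)^{\kappa\theta/2}$ of \autoref{45w}, one replaces $\tilde S^\varepsilon,\tilde I^\varepsilon,\tilde p^\varepsilon,\tilde q^\varepsilon$ by $S^\varepsilon,I^\varepsilon,p^\varepsilon,q^\varepsilon$ inside $\mathcal{H}$ at total cost $C\varepsilon^{\theta/3}$ after a H\"older estimate; specializing the variation to $u_1$ (freezing $u_2=u_2^\varepsilon$) gives the first inequality and to $u_2$ (freezing $u_1=u_1^\varepsilon$) the second. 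The hard part is exactly this transfer: the Hamiltonian is only H\"older---not Lipschitz---continuous in $d$, owing to the saturated treatment term $\frac{mu_2I}{1+\eta I}$ and the bilinear terms $\beta SI$, $(1-e)\beta VI$, and the adjoint estimates hold only for $\theta<1$; matching the fractional exponents so that every error is dominated by the single power $\varepsilon^{\theta/3}$, and checking that the spike remainder is uniformly $o(\rho)$ despite the unbounded operators $D_i\Delta$ and the switching $\Lambda_t$, is where the real effort lies. The reaction--diffusion and Markov structure enter only through the a priori estimates already secured in \autoref{Section3} and the earlier lemmas of \autoref{Section4}.
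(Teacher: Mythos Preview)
Your proposal is correct and follows exactly the standard Ekeland--spike-variation--transfer strategy that the paper itself invokes: the paper's proof consists of the single sentence ``the crucial step \ldots\ is to indicate that $H_{u}(x,t,X^{\varepsilon},u^{\varepsilon},p^{\varepsilon},q^{\varepsilon})$ is very small and use $\varepsilon$ to estimate it; the proof process is similar to \cite{3}, so we omit the details,'' and your outline is precisely the argument of \cite{3} (and of Zhou's near-optimal stochastic control theory more generally) specialized to this model. Your choice of Ekeland radius $\varepsilon^{1/3}$, yielding penalty $\varepsilon^{2/3}$ and final exponent $\theta/3$ after the H\"older transfer via \autoref{12334} and \autoref{45w}, is the right bookkeeping, and your remark that the reaction--diffusion and Markov-switching structure enter only through the a~priori lemmas is accurate.
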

\begin{proof}
The crucial step of the proof is to indicate that $H_{u(x,t)}(x,t,X^{\varepsilon},u^{\varepsilon}(x,t),p^{\varepsilon}(x,t),q^{\varepsilon}(x,t))$ is very small and use $\varepsilon$ to estimate it. The proof process is similar to \cite{3}, so we omit the details. This completes the proof.
\end{proof}

%\subsection{Sufficient conditions for near-optimal controls}\label{Section4}
\subsection{Sufficient conditions for near-optimal controls}\label{sec-controolsO}
We define the Hamiltonian function  $H(x,t,X(x,t),u(x,t),p(x),q(x,t)): \Gamma \times [0,T]\times\mathbb{R}^{3}_{+}\times\mathcal{U}_{ad}\times\mathbb{R}^{3}_{+}\times\mathbb{R}^{3}_{+}\rightarrow \mathbb{R}$ as follows:
\begin{equation}\label{xuow}
\begin{split}
H(x,t,X(x,t),u(x,t),p(x),q(x,t))&=f^{\top}(X(x,t),u(x,t))p(x)+\sigma_{*}^{\top}(X(x,t))q(x,t)+L(X(x,t),u(x,t)),
\end{split}
\end{equation}
with
\begin{equation*}
f(X(x,t),u(x,t))=
\begin{pmatrix}
	f_{1}(X(x,t),u(x,t)) \\
	f_{2}(X(x,t),u(x,t)) \\
	f_{3}(X(x,t),u(x,t)) \\
	\end{pmatrix},
 \    \sigma_{*}(X(x,t))=
\begin{pmatrix}
	\sigma_{14}(X(x,t)) \\
	\sigma_{24}(X(x,t)) \\
    \sigma_{34}(X(x,t)) \\
	\end{pmatrix},
\end{equation*}
where $f_{i} (i=1,2,3)$ and $\sigma_{i4} (i=1,2,3)$ are defined in \eqref{1`q1}, and $L(X(x,t),u(x,t))$ is defined in \eqref{s1}.

\begin{theorem}\label{thm-stationary}
Let $(X^{\varepsilon}(x,t), u^{\varepsilon}(x,t))$ be an admissible pair and $(p^{\varepsilon}(x,t),~q^{\varepsilon}(x,t))$ be the solutions of adjoint equation \eqref{adj} corresponding to $(X^{\varepsilon}(x,t), u^{\varepsilon}(x,t))$.
Assume $H(x,t, X(x,t), u(x,t), p(x), q(x,t)) $ is convex, a.s. For any $\varepsilon > 0$,  it follows that
\begin{footnotesize}
\begin{equation*}\label{fpppow}
\begin{split}
\varepsilon
\geq  \sup_{u^{\varepsilon}(x,t)\in\mathcal{U}_{ad}[0,T]}\mathbb{E}\int_{0}^{T}\int_{\Gamma} \bigg( [u_{1}^{\varepsilon}(x,t)-u_{1}(x,t)]S^{\varepsilon}(x,t)(p_{3}^{\varepsilon}(x,t)-p_{1}^{\varepsilon}(x,t)) +\frac{1}{2}(\tau_{1} [u_{1}^{\varepsilon}(x,t)-u_{1}(x,t)]^{2})\bigg)dxdt,
\end{split}
\end{equation*}
\end{footnotesize}
and
\begin{footnotesize}
\begin{equation*}\label{fpppow}
\begin{split}
\varepsilon
\geq  \sup_{u^{\varepsilon}(x,t)\in\mathcal{U}_{ad}[0,T]}\mathbb{E}\int_{0}^{T}\int_{\Gamma} \bigg(\frac{m(x,\Lambda_{t})[u_{2}^{\varepsilon}(x,t)-u_{2}(x,t)]I^{\varepsilon}(x,t)}{1+\eta(x,\Lambda_{t}) I^{\varepsilon}(x,t)}(p_{3}^{\varepsilon}(x,t)-p_{2}^{\varepsilon}(x,t))-\frac{1}{2}(\tau_{2} [u_{2}^{\varepsilon}(x,t)-u_{2}(x,t)]^{2})\bigg)dxdt,
\end{split}
\end{equation*}
\end{footnotesize}
then
\begin{equation*}
J(0,x_{0};u^{\varepsilon}(x,t))\leq \inf_{u(x,t)\in\mathcal{U}_{ad}[0,T]}J(0,x_{0};u(x,t))+C\varepsilon^{\frac{1}{2}}.
\end{equation*}
\end{theorem}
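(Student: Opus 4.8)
The plan is to fix an arbitrary admissible control $u(x,t)\in\mathcal{U}_{ad}[0,T]$ with associated trajectory $X(x,t)$, prove the single-competitor estimate $J(0,x_{0};u^{\varepsilon})\le J(0,x_{0};u)+C\varepsilon^{1/2}$, and then pass to the infimum over $u$. The whole argument runs on three inputs: the assumed convexity of the Hamiltonian $H$ in \eqref{xuow}, the convexity of the terminal cost $h$, and a duality identity extracted from the adjoint system \eqref{adj}. Throughout I would treat the two displayed hypotheses as a \emph{near-minimality condition} for $H$ in the control variable, the margins $\tfrac12\tau_1[u_1^{\varepsilon}-u_1]^2$ and $\tfrac12\tau_2[u_2^{\varepsilon}-u_2]^2$ encoding the strong convexity of $H$ in $u_1,u_2$ supplied by the quadratic cost in \eqref{s1}.

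First I would write $J(0,x_{0};u^{\varepsilon})-J(0,x_{0};u)$ as the running-cost difference $\mathbb{E}\int_0^T\int_\Gamma[L(X^{\varepsilon},u^{\varepsilon})-L(X,u)]\,dx\,dt$ plus the terminal difference $\mathbb{E}\int_\Gamma[h(X^{\varepsilon}(x,T))-h(X(x,T))]\,dx$. Convexity of $h$ together with the terminal condition $p_i^{\varepsilon}(x,T)=h_{X_i}(X^{\varepsilon}(x,T))$ bounds the terminal difference from above by $\mathbb{E}\int_\Gamma\langle p^{\varepsilon}(x,T),X^{\varepsilon}(x,T)-X(x,T)\rangle\,dx$. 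I then apply the infinite-dimensional It\^o formula to $\int_\Gamma\langle p^{\varepsilon}(x,t),X^{\varepsilon}(x,t)-X(x,t)\rangle\,dx$ (which vanishes at $t=0$ by the common initial datum), inserting the drifts of the state system \eqref{1`q1} and of \eqref{adj} and discarding the $dB$ martingale parts under $\mathbb{E}$. The diffusion contributions $\langle p^{\varepsilon},D_i\Delta(X^{\varepsilon}-X)\rangle$ and $\langle X^{\varepsilon}-X,D_i\Delta p^{\varepsilon}\rangle$ are reorganised by Green's identity, the boundary integrals vanishing because of the Neumann condition \eqref{3q} on the state and the Dirichlet condition $p_i=0$ on $\partial\Gamma$ in \eqref{adj}; the switching coefficients $\Lambda_t$ are carried along entrywise exactly as in the computation of Section~\ref{Section3}. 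Recognising the definition \eqref{xuow} of $H$ in the integrand and then invoking the joint convexity of $H$ in $(X,u)$ collapses everything to
$$J(0,x_{0};u^{\varepsilon})-J(0,x_{0};u)\le \mathbb{E}\int_{0}^{T}\int_{\Gamma}\langle H_{u}(X^{\varepsilon},u^{\varepsilon},p^{\varepsilon},q^{\varepsilon}),\,u^{\varepsilon}-u\rangle\,dx\,dt,$$
where the tangent-plane term $\langle H_X(X^{\varepsilon},u^{\varepsilon}),X-X^{\varepsilon}\rangle$ produced by the It\^o step is exactly cancelled by the convexity inequality for $H$.

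It remains to bound the right-hand side by $C\varepsilon^{1/2}$. Here the quadratic (strong-convexity) margins in the hypotheses do the work: since $H$ is $\tau_i$-strongly convex in $u_i$ and the two displayed functionals are $\le\varepsilon$, the near-optimal control $u^{\varepsilon}$ lies within $O(\varepsilon^{1/2})$, in the $L^2(\Omega\times[0,T]\times\Gamma)$ sense, of the pointwise minimiser of $u\mapsto H(X^{\varepsilon},u,p^{\varepsilon},q^{\varepsilon})$ over $U$. Consequently the (projected) gradient $H_u(X^{\varepsilon},u^{\varepsilon},p^{\varepsilon},q^{\varepsilon})$—whose components are $S^{\varepsilon}(p_3^{\varepsilon}-p_1^{\varepsilon})+\tau_1 u_1^{\varepsilon}$ and $\frac{mI^{\varepsilon}}{1+\eta I^{\varepsilon}}(p_3^{\varepsilon}-p_2^{\varepsilon})+\tau_2 u_2^{\varepsilon}$—has $L^2$ norm $O(\varepsilon^{1/2})$, its finiteness being guaranteed by the moment bounds for $(p^{\varepsilon},q^{\varepsilon})$ in Lemma \ref{sib} and for the states. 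Pairing this against $u^{\varepsilon}-u$, which is uniformly bounded since the controls take values in the compact set $U$, by the Cauchy--Schwarz inequality yields the desired $C\varepsilon^{1/2}$ bound; taking the infimum over all admissible $u$ then gives the theorem.

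\textbf{Main obstacle.} The delicate point is making the duality identity fully rigorous in this setting: the It\^o formula must be justified for the $H^{1}$-valued semimartingale $\langle p^{\varepsilon},X^{\varepsilon}-X\rangle$, and the Laplacian terms require the boundary integrals to cancel under the \emph{mismatched} Neumann (state) and Dirichlet (adjoint) conditions, while the generator $Q$ of $\{\Lambda_t\}$ contributes a switching term that must be shown not to disturb the sign. The second genuinely quantitative step is converting the $\varepsilon$-margins in the hypotheses into the uniform $\varepsilon^{1/2}$ gradient estimate; this is where the strong convexity in $u$ is essential, and it is also the mechanism that degrades the rate from $\varepsilon$ to $\varepsilon^{1/2}$. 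Everything else—the continuity estimates of Lemma \ref{12334} and \ref{45w} and the priori bounds of Lemma \ref{sib}—enters only to keep the error terms integrable, and I expect those to be routine relative to the duality and near-minimality arguments.
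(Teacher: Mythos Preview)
Your proposal is correct and follows the standard convexity--duality route for sufficient conditions of near-optimality (It\^o product formula on $\langle p^{\varepsilon},X^{\varepsilon}-X\rangle$, convexity of $H$ and $h$ to reduce to $\mathbb{E}\int\langle H_u,u^{\varepsilon}-u\rangle$, then strong convexity in $u$ plus Cauchy--Schwarz/H\"older to extract the $\varepsilon^{1/2}$ rate). The paper's own proof consists of a single sentence---``From the definition of $\widetilde{d}$ of Lemma~\ref{sib}, according to H\"older's inequality and reference \cite{kiyt}, we can obtain the conclusion''---so it defers the entire argument to an external source; your outline is exactly the machinery such a reference would supply, and the appearance of H\"older's inequality in both is the one visible point of contact. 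The boundary-condition mismatch you flag as the main obstacle is genuine and is not addressed in the paper either.
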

\begin{proof}
From the definition of $\widetilde{d}$ of \autoref{sib}, according to Holder's inequality and reference \cite{kiyt}, we can obtain the conclusion.
\end{proof}
%In the above sections, we have give the sufficient and necessary  conditions of the near-optimality control of SIV epidemic model. Our next goal is to illustrate the theoretical results through the numerical solution. In the following, we will give some figures to show the results.
\section{Optimal control and Numerical Algorithm}\label{Section5}
In this section, we study the optimal control of the stochastic SIV epidemic system, and then give the numerical approximation algorithm for the finite horizon optimal control problem of the stochastic epidemic system. %Online off-policy integral reinforcement learning (IRL) algorithm
\subsection{Optimal control}
Since $u_{1},u_{2}$ are function of the state and time, the easiest approach to the calculation of $(u_{1},u_{2},V^{*})$ is via the Bellman equation of continuous-time dynamic programming. Let $(p_{1},p_{2},p_{3}):=(V_{1{\bf{x}}},V_{2{\bf{x}}},V_{3{\bf{x}}})$ and $(P_{1},P_{2},P_{3}):=(V_{1{\bf{x}}{\bf{x}}},V_{2{\bf{x}}{\bf{x}}},V_{3{\bf{x}}{\bf{x}}})$.  Thus, the following Lyapunov equation (LE) is obtained.
\begin{equation}]\label{lkxcc}
\begin{split}
0&=V_{t}+p_{1}[D_{1}(x)\Delta S(x,t)+(1-p(x,\Lambda_{t}))b(x,\Lambda_{t})+\alpha(x,\Lambda_{t})  I(x,t)-\mu(x,\Lambda_{t}) S(x,t)\\
&-\beta(x,\Lambda_{t}) S(x,t) I(x,t)]+p_{2}[D_{2}(x)\Delta I(x,t)+\beta(x,\Lambda_{t}) S(x,t) I(x,t)\\
&  +(1-e)\beta(x,\Lambda_{t}) I(x,t) V(x,t)-(\mu(x,\Lambda_{t})+\alpha(x,\Lambda_{t}) )I(x,t)]\\
&+p_{3}[D_{3}(x)\Delta V(x,t)+ p(x,\Lambda_{t}) b(x,\Lambda_{t})-\mu(x,\Lambda_{t}) V(x,t)\\
&-(1-e)\beta(x,\Lambda_{t}) I(x,t) V(x,t)]+\frac{1}{2}[P_{1}\sigma^{2}(x,\Lambda_{t})S^{2}(x,t)+P_{2}\sigma^{2}(x,\Lambda_{t})I^{2}(x,t)\\
&+P_{3}\sigma^{2}(x,\Lambda_{t})V^{2}(x,t)]+A_{1}S(x,t)+A_{2}I(x,t)+S^{*}(x,t)+I^{*}(x,t),
\end{split}
\end{equation}
where
\begin{equation}
S^{*}(x,t)=p_{3}(x,t)u_{1}(x,t)S(x,t)-p_{1}(x,t)u_{1}(x,t)S(x,t)+\tau_{1}u_{1}^{2}(x,t),
\end{equation}
and
\begin{equation}
I^{*}(x,t)=p_{3}(x,t)\frac{m(x,\Lambda_{t})u_{2}(x,t)I(x,t)}{1+\eta(x,\Lambda_{t}) I(x,t)}-p_{2}(x,t)\frac{m(x,\Lambda_{t})u_{2}(x,t)I(x,t)}{1+\eta(x,\Lambda_{t}) I(x,t)}+\tau_{2}u_{2}^{2}(x,t),
\end{equation}
the regular control given as
\begin{equation}
u_{1}(x,t)=\frac{p_{1}(x,t)-p_{3}(x,t)}{2\tau_{1}}S(x,t),~~~u_{2}(x,t)=\frac{(p_{2}(x,t)-p_{3}(x,t))m(x,\Lambda_{t})}{2\tau_{2}(1+\eta (x,\Lambda_{t})I(x,t))}I(x,t).
\end{equation}
the optimal given as
\begin{equation}
\begin{split}
u_{1}^{*}(x,t)=
\begin{cases}
u_{1}^{\max},&u_{1}^{\max}\leq u_{1}(x,t),\\
u_{1}(x,t),&u_{1}^{\min}\leq u_{1}(x,t)\leq u_{1}^{\max}, \\
u_{1}^{\min},&u_{1}(x,t)\leq u_{1}^{\min},
\end{cases}
u_{2}^{*}(x,t)=
\begin{cases}
u_{2}^{\max},&u_{2}^{\max}\leq u_{2}(x,t),\\
u_{2}(x,t),&u_{2}^{\min}\leq u_{2}(x,t)\leq u_{2}^{\max}, \\
u_{2}^{\min},&u_{2}(x,t)\leq u_{2}^{\min}.
\end{cases}
\end{split}
\end{equation}
Furthermore, according to the ideas in Lenhart and Workman, the optimal control $u^{*}(x,t)$, which minimizes the objective function $J(0, X(x,t); u(x,t))$ is present as
$$u_{1}^{*}(x,t)=\max\{0,\min\{1, \frac{S(x,t)}{2\tau_{1}}(p_{1}(x,t)-p_{3}(x,t))\}\},$$
and
$$u_{2}^{*}(x,t)=\max\{0,\min\{1, \frac{m(x,\Lambda_{t})I(x,t)}{2\tau_{2}(1+\eta(x,\Lambda_{t}) I(x,t))}(p_{2}(x,t)-p_{3}(x,t))\}\},$$
where $(p_{1}(x,t),p_{2}(x,t),p_{3}(x,t))$ is the solution of system \eqref{adj}.
\subsection{Approximation of HJB equation}
It is well known that it is difficult to find the analytical solution to the HJB equation due to the partial differential equation. Therefore, it is necessary to find the successive approximation method for the finite horizon optimal control problem of the stochastic epidemic system. In this section, we assume that the spaces are homogeneous.  For simplify, we set $(S,I,V):=(S(x,t),I(x,t),V(x,t))$, respectively.  In order to give the online learning algorithm, we first give the Lemma.
\begin{lemma}\label{lemmav}
Assume $(u_{1}^{(i+1)},~u_{2}^{(i+1)})$ are the admissible control of the epidemic system, $V^{(i)}({\bf{x}},t)$ is the solution to the Lyapunov equation $LE(V^{(i)},u^{(i)})=0$ with the boundary conditions $V^{(i)}(0,t)=0,~V^{(i)}({\bf{x}}(t_{f}),t)=\phi({\bf{x}}(t_{f}))$, then \\
(1) $u_{1}^{(i+1)}=-(2\tau_{1})^{-1}S^{\top}(x,t)(V_{1{\bf{x}}}^{(i)}-V_{3{\bf{x}}}^{(i)}),~\mbox{and}~u_{2}^{(i+1)}
=-[2\tau_{2}(1+\eta(x,\Lambda_{t}) I(x,t))]^{-1}I^{\top}(x,t)m(x,\Lambda_{t})(V_{2{\bf{x}}}^{(i)}-V_{3{\bf{x}}}^{(i)})$ is an admissible control.\\ Moreover, if $V^{(i+1)}({\bf{x}},t)$ is the solution to the Lyapunov equation $LE(V^{(i+1)},u^{(i+1)})=0$ with the boundary conditions $V^{(i+1)}(0,t)=0,~V^{(i+1)}({\bf{x}}(t_{f}),t)=\phi({\bf{x}}(t_{f}))$, then\\
(2) $V^{*}({\bf{x}},t)\leq V^{(i+1)}({\bf{x}},t)\leq V^{(i)}({\bf{x}},t)$.
\end{lemma}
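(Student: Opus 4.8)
The plan is to recognize this as the monotone-improvement step of policy iteration for the switching reaction-diffusion control problem, and to split the argument exactly as the two assertions suggest. For assertion (1), I would write the Lyapunov equation abstractly as $LE(V,u)=V_t+\mathcal{L}^{u}V+L(X,u)=0$, where $\mathcal{L}^{u}$ is the controlled generator of the switching diffusion \eqref{1`q1}, collecting the diffusion terms $D_i\Delta$, the drifts, the $\tfrac12\sigma^2(\cdot,\Lambda_t)$ second-order terms, and the coupling through the $Q$-matrix. The control enters $\mathcal{L}^{u}V^{(i)}+L$ only through the $u_1$-terms $u_1 S(p_3-p_1)$, the $u_2$-terms $\tfrac{m u_2 I}{1+\eta I}(p_3-p_2)$, and the quadratic costs appearing in $S^{*}$ and $I^{*}$, with $(p_1,p_2,p_3)=(V_{1\mathbf{x}}^{(i)},V_{2\mathbf{x}}^{(i)},V_{3\mathbf{x}}^{(i)})$. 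Since these are strictly convex quadratics in $(u_1,u_2)$ (because $\tau_1,\tau_2>0$), the pointwise stationarity condition yields a unique minimiser, namely the closed-form feedback laws stated in (1). Admissibility then follows from the earlier a priori estimates: $S,I,V$ have finite moments of every order, $p_i=V_{i\mathbf{x}}^{(i)}$ are square-integrable by \autoref{sib}, and $m,\eta,(1+\eta I)^{-1}$ are bounded, so $u^{(i+1)}$ is $\mathcal{F}_t$-adapted with $\mathbb{E}\int_0^T\int_\Omega |u^{(i+1)}|^2\,dx\,dt<\infty$; composing with the projection onto the closed bounded set $U$ keeps it in $\mathcal{U}_{ad}$.

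For assertion (2) I would prove the two inequalities separately. The right inequality $V^{(i+1)}\le V^{(i)}$ is the heart of the improvement step. By construction $u^{(i+1)}$ minimises $u\mapsto \mathcal{L}^{u}V^{(i)}+L(X,u)$, so $LE(V^{(i)},u^{(i+1)})\le LE(V^{(i)},u^{(i)})=0$, while $LE(V^{(i+1)},u^{(i+1)})=0$. Subtracting and using that $\mathcal{L}^{u^{(i+1)}}$ is linear in its argument, the difference $W:=V^{(i)}-V^{(i+1)}$ satisfies $W_t+\mathcal{L}^{u^{(i+1)}}W\le 0$ with zero terminal and boundary data, since both $V^{(i)}$ and $V^{(i+1)}$ share the terminal value $\phi(\mathbf{x}(t_f))$ and vanish on the boundary. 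Setting $g:=-(W_t+\mathcal{L}^{u^{(i+1)}}W)\ge 0$, the Feynman--Kac representation for the switching diffusion under $u^{(i+1)}$ gives $W(\mathbf{x},t)=\mathbb{E}\big[\int_t^{t_f}g(\mathbf{x}(s),s)\,ds\big]\ge 0$, i.e. $V^{(i+1)}\le V^{(i)}$. The left inequality $V^{*}\le V^{(i+1)}$ is a verification argument: applying It\^o's formula to $V^{(i+1)}(\mathbf{x}(s),s)$ along the trajectory driven by the admissible control $u^{(i+1)}$ and invoking $LE(V^{(i+1)},u^{(i+1)})=0$ (the martingale terms vanishing in expectation) shows $V^{(i+1)}(\mathbf{x},t)=J(t,\mathbf{x};u^{(i+1)})$, which is at least $\inf_{u\in\mathcal{U}_{ad}}J(t,\mathbf{x};u)=V^{*}(\mathbf{x},t)$.

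The main obstacle I anticipate is making the comparison step for $W$ rigorous in the presence of both the Laplacian terms and the Markov coupling: $\mathcal{L}^{u^{(i+1)}}$ acts on the whole family $(V^{(i)}(\cdot,\cdot,k))_{k\in\mathbb{S}}$ and couples the regimes through $Q$. The representation $W\ge 0$ is valid precisely because $Q$ is a conservative generator (nonnegative off-diagonal entries, zero row sums), so the switching part of $\mathcal{L}^{u^{(i+1)}}$ preserves nonnegativity and the parabolic maximum principle carries over to the coupled system. Verifying this positivity-preserving structure, together with the Neumann boundary data in \eqref{3q} and the regularity of the $V^{(i)}$ postulated in the hypotheses, is the step I would check most carefully, since it is what licenses both the Feynman--Kac comparison and the verification identity used above.
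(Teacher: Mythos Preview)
Your argument is correct and follows the same underlying idea as the paper: both run It\^o/Feynman--Kac along the trajectory driven by $u^{(i+1)}$ and exploit the two Lyapunov equations $LE(V^{(i)},u^{(i)})=0$ and $LE(V^{(i+1)},u^{(i+1)})=0$. The difference is one of directness. You set up the comparison abstractly, deducing $W_t+\mathcal{L}^{u^{(i+1)}}W\le 0$ from the minimising property of $u^{(i+1)}$ and then invoking a Feynman--Kac representation (and worrying, reasonably, about the parabolic maximum principle for the $Q$-coupled system). The paper instead computes $V^{(i+1)}(\mathbf{x},t)-V^{(i)}(\mathbf{x},t)$ explicitly: after substituting the LEs and the closed-form update rules $(V_{1\mathbf{x}}^{(i)}-V_{3\mathbf{x}}^{(i)})S=-2\tau_1 u_1^{(i+1)}$, $(V_{2\mathbf{x}}^{(i)}-V_{3\mathbf{x}}^{(i)})\tfrac{mI}{1+\eta I}=-2\tau_2 u_2^{(i+1)}$, the diffusion, drift and switching terms cancel identically and one is left with
\[
V^{(i+1)}-V^{(i)}=-\int_t^{t_f}\Big[(u_1^{(i+1)}-u_1^{(i)})^{\top}\tau_1(u_1^{(i+1)}-u_1^{(i)})+(u_2^{(i+1)}-u_2^{(i)})^{\top}\tau_2(u_2^{(i+1)}-u_2^{(i)})\Big]ds\le 0.
\]
This explicit quadratic form sidesteps entirely the comparison-principle concern you flag as the main obstacle: no positivity-preserving structure of $Q$ or maximum principle is needed, because the sign is manifest. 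Your route is more general (it would work for non-quadratic running costs), but the paper's direct computation is shorter and gives sharper information. For the left inequality $V^*\le V^{(i+1)}$ both proofs coincide: $V^{(i+1)}$ is the cost of the admissible policy $u^{(i+1)}$, hence dominates the infimum.
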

\begin{proof}
For the second part of \autoref{lemmav}, we should consider the properties of time-dependence and two parts of function $V({\bf{x}},t)$, $\forall {\bf{x}}\in \Omega, t\in [t_{0},t_{f}]$. We start the state ${\bf{x}}$ and go along the trajectories $\dot{{\bf{x}}}=f_{i}+g_{i}u^{(i+1)}+\sigma_{i4}\dot{B},~(i=1,2,3)$, we have
\begin{equation}
\begin{split}
&V^{(i+1)}({\bf{x}},t)-V^{(i)}({\bf{x}},t)=\int_{t}^{t_{f}}d(V^{(i)}({\bf{x}},s)-V^{(i+1)}({\bf{x}},s))\\
=&\int_{t}^{t_{f}}(V_{{\bf{x}}}^{(i)}-V_{{\bf{x}}}^{(i+1)})^{T}dx+(V_{s}^{(i)}-V_{s}^{(i+1)})ds\\
=&\int_{t}^{t_{f}}[(V_{1{\bf{x}}}^{(i)}-V_{1{\bf{x}}}^{(i+1)})^{T}(f_{1}-S(x,t)u_{1}^{(i+1)}+\sigma_{14}\dot{B})
+(V_{2{\bf{x}}}^{(i)}-V_{2{\bf{x}}}^{(i+1)})^{T}(f_{2}-\frac{m(x,\Lambda_{t})I(x,t)u_{2}^{(i+1)}}{1+\eta(x,\Lambda_{t}) I(x,t)}+\sigma_{24}\dot{B})\\
&+(V_{3{\bf{x}}}^{(i)}-V_{3{\bf{x}}}^{(i+1)})^{T}(f_{3}+S(x,t)u_{1}^{(i+1)}
+\frac{m(x,\Lambda_{t})I(x,t)u_{2}^{(i+1)}}{1+\eta(x,\Lambda_{t}) I(x,t)})+\sigma_{34}\dot{B}]+(V_{s}^{(i)}-V_{s}^{(i+1)})ds,
\end{split}
\end{equation}
where
\begin{equation}
\begin{split}
f_{1}=&(1-p(x,\Lambda_{t}))b(x,\Lambda_{t})+\alpha(x,\Lambda_{t}) I(x,t)-(\mu(x,\Lambda_{t})-\beta(x,\Lambda_{t}) S(x,t) I(x,t),\\
f_{2}=&\beta(x,\Lambda_{t}) S(x,t) I(x,t)  +(1-e)\beta(x,\Lambda_{t}) V(x,t)I(x,t)-(\mu(x,\Lambda_{t})+\alpha(x,\Lambda_{t}))I(x,t),\\
f_{3}=& p(x,\Lambda_{t}) b(x,\Lambda_{t})-\mu(x,\Lambda_{t})V(x,t) -(1-e)\beta(x,\Lambda_{t}) V(x,t) I(x,t).
\end{split}
\end{equation}
Based on LEs \eqref{lkxcc}, we have
\begin{equation}
\begin{split}
&V^{(i+1)}({\bf{x}},t)-V^{(i)}({\bf{x}},t)\\
=&\int_{t}^{t_{f}}((u_{1}^{(i+1)T}\tau_{1}u_{1}^{(i+1)}-u_{1}^{(i)T}\tau_{1}u_{1}^{(i)})
+(u_{2}^{(i+1)T}\tau_{2}u_{2}^{(i+1)}-u_{2}^{(i)T}\tau_{2}u_{2}^{(i)})\\
&+(V_{1{\bf{x}}}^{(i)T}-V_{3{\bf{x}}}^{(i)T})S(x,t)(u_{1}^{(i+1)}-u_{1}^{(i)})+(V_{2{\bf{x}}}^{(i)T}
-V_{3{\bf{x}}}^{(i)T})\frac{m(x,\Lambda_{t})I(x,t)}{1+\eta(x,\Lambda_{t}) I(x,t)}(u_{2}^{(i+1)}-u_{2}^{(i)}))ds,
\end{split}
\end{equation}
according to update control policy
\begin{equation}
u_{1}^{(i+1)}=-(2\tau_{1})^{-1}S^{T}(x,t)(V_{1{\bf{x}}}^{(i)}-V_{3{\bf{x}}}^{(i)}),~\mbox{and}~u_{2}^{(i+1)}
=-[2\tau_{2}(1+\eta(x,\Lambda_{t}) I(x,t))]^{-1}I^{T}(x,t)(V_{2{\bf{x}}}^{(i)}-V_{3{\bf{x}}}^{(i)}),
\end{equation}
then,
\begin{equation}
(V_{1{\bf{x}}}^{(i)T}-V_{3{\bf{x}}}^{(i)T})S(x,t)=-2\tau_{1}u_{1}^{(i+1)},~\mbox{and}~(V_{2{\bf{x}}}^{(i)T}
-V_{3{\bf{x}}}^{(i)T})[(1+\eta(x,\Lambda_{t}) I(x,t))]^{-1}m(x,\Lambda_{t})I(x,t)=-2\tau_{2}u_{2}^{(i+1)},
\end{equation}
and
\begin{equation}
\begin{split}
&V^{(i+1)}({\bf{x}},t)-V^{(i)}({\bf{x}},t)\\
=&\int_{t}^{t_{f}}[(u^{(i+1)T}\tau_{1}u^{(i+1)}-u^{(i)T}\tau_{1}u^{(i)})
+(u^{(i+1)T}\tau_{2}u^{(i+1)}-u^{(i)T}\tau_{2}u^{(i)})\\
&-2\tau_{1}u_{1}^{(i+1)}(u_{1}^{(i+1)}-u_{1}^{(i)})-2\tau_{2}u_{2}^{(i+1)}(u_{2}^{(i+1)}
-u_{2}^{(i)})]ds\\
=&-\int_{t}^{t_{f}}[(u_{1}^{(i+1)}-u_{1}^{(i)})^{T}\tau_{1}(u_{1}^{(i+1)}-u_{1}^{(i)})
+(u_{2}^{(i+1)}-u_{2}^{(i)})^{T}\tau_{2}(u_{2}^{(i+1)}-u_{2}^{(i)})]ds\leq 0.
\end{split}
\end{equation}
Therefore, $V^{(i+1)}({\bf{x}},t)\leq V^{(i)}({\bf{x}},t)$. By the definition of $V^{*}(x,t)$, we have
\begin{equation}
V^{*}({\bf{x}},t)\leq V^{(i+1)}({\bf{x}},t)\leq V^{(i)}({\bf{x}},t),
\end{equation}
by the monotone bounded convergence theorem, we can conclude that $V^{(i)}({\bf{x}},t)$ converges to $V^{(\infty)}({\bf{x}},t)$. In addition, $V^{(i)}({\bf{x}},t)$ is bounded below by $V^{*}({\bf{x}},t)$ and $V^{*}({\bf{x}},t)$ is the unique optimal value function. Hence, we have $V^{(\infty)}({\bf{x}},t)=V^{*}({\bf{x}},t)$. That is to say, $V^{(i)}({\bf{x}},t)$ converges uniformly to $V^{*}({\bf{x}},t)$, which is the solution to the HJB equation.
\end{proof}

%\begin{remark}
%\textcolor{red}{From Lemma 1, it is easy to obtain the approximation solution to the time-varying HJB equation related to the finite-horizon optimal control of time-delay system. Instead of solving the HJB equation (10) directly, the solution to the HJB equation can be solved based on the successive solution to the LE (6). However, the successive approximation approach mentioned above relies on the completely known system dynamics.

In practice, the accurate system dynamics are usually unavailable due to the complicated and large scale manufacturing techniques. In order to overcome the difficulty, we need to design an off-model learning algorithm.
%\end{remark}

{\bf Algorithm. Online off-policy integral reinforcement learning (IRL) algorithm}\\
{\bf{Step 1.}} Select an initial admissible control policy $u^{(0)}$ and time step $\delta$. Let $i_{\max}$ be prefixed according to $t_{f}$ and $\delta$. Let $i=0$.\\
{\bf{Step 2.}} Solving the following time-varying integral LE for $V^{(i)}$ and $u^{(i+1)}$.
\begin{equation}\label{pss}
\begin{split}
V^{(i)}({\bf{x}}(t-\delta),t-\delta)&=2\int_{t-\delta}^{t}[u^{(i+1)\delta}\tau_{1}(u-u^{(i)})
+u^{(i+1)\delta}\tau_{1}(u-u^{(i)})]ds\\
&+\int_{t-\delta}^{t}(A_{1}S(x,t)+A_{2}I(x,t)+u_{1}^{(i)}\tau_{1}u_{1}^{(i)}
+u_{2}^{(i)}\tau_{2}u_{2}^{(i)})ds+V^{(i)}({\bf{x}},t)
\end{split}
\end{equation}
with $V^{(i)}({\bf{x}}(t_{f}),t_{f})=\phi({\bf{x}}(t_{f}))$.\\
{\bf{Step 3.}}
If $i<i_{\max}$, let $i=i+1$ and go to {\bf{Step 2}}; else stop and output the approximate optimal value $V^{(i)}$ as the optimal value function $V^{*}$.

Here we derive the online off-policy IRL algorithm for optimal control of SIV epidemic system. Firstly, we rewrite the system \eqref{1`q1} as
\begin{equation}\label{5}
\small
\begin{dcases}
\begin{split}
{d{S(x,t)}}=&\bigg(D_{1}(x)\Delta S(x,t)+(1-p)b+\alpha  I(x,t)-\mu S(x,t)-\beta S(x,t) I(x,t)-(u_{1}(x,t) \\
&-u_{1}^{(i)}(x,t)) S(x,t)-u_{1}^{(i)}(x,t) S(x,t)\bigg)dt-\sigma S(x,t) I(t)dB_{1}(x,t),\\
%&\equiv f_{1}(x(t),u)dt-\sigma_{14}(x(t))dB(t), \\
{d{I}(x,t)}=& \bigg(D_{2}(x)\Delta I(x,t)+\beta S(x,t) I(x,t)  +(1-e)\beta I(x,t) V(x,t)-(\mu+\alpha )I(x,t)\\
&-\frac{m(u_{2}(x,t) -u_{2}^{(i)}(x,t))I(x,t)}{1+\eta I(x,t)}-\frac{mu_{2}^{(i)}(x,t)I(x,t)}{1+\eta I(x,t)}\bigg)dt\\
&+\sigma S(t) I(t)dB_{2}(t)+\sigma I(t) V(t)dB_{4}(x,t),\\
\end{split}
\end{dcases}
\end{equation}

\begin{equation*}
\small
\begin{dcases}
\begin{split}
%&\equiv f_{2}(x(t),u)dt-\sigma_{24}(x(t))dB(t), \\
{d{V}(x,t)} =& \bigg( D_{3}(x)\Delta V(x,t)+p b+(u_{1}(x,t) -u_{1}^{(i)}(x,t)) S(x,t)-u_{1}^{(i)}(x,t) S(x,t)-\mu V(x,t)\\
&-(1-e)\beta I(x,t) V(x,t)+\frac{m(u_{2}(x,t) -u_{2}^{(i)}(x,t))I(x,t)}{1+\eta I(x,t)}\\
&-\frac{mu_{2}^{(i)}I(x,t)}{1+\eta I(x,t)}\bigg)dt-\sigma I(x,t) V(x,t)dB_{3}(x,t),\\
%&\equiv f_{3}(x(t),u)dt-\sigma_{34}(x(t))dB(t),\\
S(0,x)=&S_{0}(x),~~ I(0,(x,t))=I_{0}(x),~~ V(0,x)=V_{0}(x).
\end{split}
\end{dcases}
\end{equation*}

Let us consider $V^{(i)}$, which is the solution to the Lyapunov equation $LE(V^{(i)},u^{(i)})=0$. Then, we have
\begin{equation*}
\begin{split}
0=V_{t}^{(i)}&+p_{1}^{i}[D_{1}(x)\Delta S(x,t)+(1-p)b+\alpha  I(x,t)-\mu S(x,t)-\beta S(x,t) I(x,t)]\\
&+p_{2}^{(i)}[D_{2}(x)\Delta I(x,t)+\beta S(x,t) I(x,t)  +(1-e)\beta I(x,t) V(x,t)-(\mu+\alpha )I(x,t)]\\
&+p_{3}^{(i)}[D_{3}(x)\Delta V(x,t)+ p b-\mu V(x,t)-(1-e)\beta I(x,t) V(x,t)]\\
&+\frac{1}{2}[P_{1}^{(i)}\sigma^{2}S^{2}(x,t)I^{2}(x,t)+P_{2}^{(i)}(\sigma^{2}S^{2}(x,t)I^{2}(x,t)
+\sigma^{2}I^{2}(x,t)V^{2}(x,t))
+P_{3}^{(i)}\sigma^{2}I^{2}(x,t)V^{2}(x,t)]\\
&+A_{1}S(x,t)+A_{2}I(x,t)+p_{3}^{(i)}u_{1}^{i}S(x,t)-p_{1}^{i}u_{1}^{i}S(x,t)+\tau_{1}u_{1}^{2}+p_{3}\frac{mu_{2}I}{1+\eta I(x,t)}-p_{2}\frac{mu_{2}I(x,t)}{1+\eta I(x,t)}+\tau_{2}u_{2}^{2}\\
&+(p_{3}^{(i)}-p_{1}^{(i)})(u_{1}-u_{1}^{(i)})S(x,t)+\tau_{1}(u_{1}^{2}-u_{1}^{2(i)})+(p_{3}^{(i)}
-p_{2}^{(i)})\frac{m(u_{2}-u_{2}^{(i)})I(x,t)}{1+\eta I(x,t)}+\tau_{2}(u_{2}^{2}-u_{2}^{2(i)}),
\end{split}
\end{equation*}

Since $V^{(i)}$ is the solution to the Lyapunov equation \eqref{lkxcc}, we have
\begin{equation}\label{pagc}
\dot{V}^{(i)}({\bf{x}}(t),t)=-A_{1}S(x,t)-A_{2}I(x,t)-u_{1}^{(i)T}\tau_{1}u_{1}^{(i)}
-u_{2}^{(i)T}\tau_{2}u_{2}^{(i)}+V_{x}^{(i)T}g(u-u^{(i)}).
\end{equation}
Integrating the both sides of equation \eqref{pagc}, we can get
\begin{equation}
\begin{split}
&V^{(i)}({\bf{x}}(t),t)-V^{(i)}({\bf{x}}(t-\delta),t-\delta)\\&=\int_{t-\delta}^{t}(V_{{\bf{x}}}^{(i)T}g(u-u^{(i)})
-A_{1}S(x,t)-A_{2}I(x,t)-u_{1}^{(i)T}\tau_{1}u_{1}^{(i)}-u_{2}^{(i)T}\tau_{2}u_{2}^{(i)})ds.
\end{split}
\end{equation}

\begin{lemma}
Solving the $(V^{(i)},u^{(i+1)})$ of LEs \eqref{pss} is equivalent to finding the solution to the following equations
$LE(V^{(i)},u^{(i)})=0~ \mbox{and}~ u_{1}^{(i+1)}=-(2c_{2})^{-1}S^{T}(x,t)(V_{1{\bf{x}}}^{(i)}-V_{3{\bf{x}}}^{(i)}),~\mbox{and}~u_{2}^{(i+1)}=-[2c_{3}(1+\eta I)]^{-1}I^{T}(x,t)(V_{2{\bf{x}}}^{(i)}-V_{3{\bf{x}}}^{(i)}),~\mbox{with}~V^{(i)}({\bf{x}}(t_{f}),t_{f})=\phi({\bf{x}}(t_{f})).$
\end{lemma}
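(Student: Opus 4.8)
The plan is to prove the asserted equivalence in both directions by a direct computation, the engine of which is the integrated form of the Lyapunov equation together with the control-update relations already established in \autoref{lemmav}.

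First I would treat the implication ``$LE(V^{(i)},u^{(i)})=0$ together with the update formulas $\Rightarrow$ \eqref{pss}''. Assuming $V^{(i)}$ solves $LE(V^{(i)},u^{(i)})=0$, the identity \eqref{pagc} holds pointwise along the trajectory driven by the exploration control $u$; integrating it over $[t-\delta,t]$ produces the integrated balance displayed immediately before the statement. The only term carrying the (unknown) drift is $V_{{\bf{x}}}^{(i)T}g(u-u^{(i)})$, which, by the input structure of \eqref{5}, decomposes as
\[
V_{{\bf{x}}}^{(i)T}g(u-u^{(i)})=(V_{3{\bf{x}}}^{(i)T}-V_{1{\bf{x}}}^{(i)T})S\,(u_{1}-u_{1}^{(i)})+(V_{3{\bf{x}}}^{(i)T}-V_{2{\bf{x}}}^{(i)T})\frac{mI}{1+\eta I}(u_{2}-u_{2}^{(i)}).
\]
Inserting the update relations from \autoref{lemmav}, namely $(V_{1{\bf{x}}}^{(i)T}-V_{3{\bf{x}}}^{(i)T})S=-2\tau_{1}u_{1}^{(i+1)}$ and $(V_{2{\bf{x}}}^{(i)T}-V_{3{\bf{x}}}^{(i)T})(1+\eta I)^{-1}mI=-2\tau_{2}u_{2}^{(i+1)}$, eliminates the value gradient and replaces this term by $2\tau_{1}u_{1}^{(i+1)}(u_{1}-u_{1}^{(i)})+2\tau_{2}u_{2}^{(i+1)}(u_{2}-u_{2}^{(i)})$. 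Collecting the remaining running-cost terms and rearranging then reproduces \eqref{pss}, which settles this direction.

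For the converse, ``\eqref{pss} $\Rightarrow$ $LE(V^{(i)},u^{(i)})=0$ together with the updates'', I would start from \eqref{pss} holding on the trajectories generated by $u$ for every admissible step $\delta$. Differentiating with respect to the upper endpoint (equivalently, dividing by $\delta$ and letting $\delta\to 0$, then invoking the fundamental theorem of calculus) recovers the pointwise relation \eqref{pagc}; crucially, the drift $f$ reappears only through $\dot{V}^{(i)}$ along the realized trajectory, so no model knowledge is needed, which is precisely the off-model character of the scheme. Because the exploration signal $u-u^{(i)}$ enters linearly and can be chosen persistently exciting, matching the coefficients of $(u_{1}-u_{1}^{(i)})$ and $(u_{2}-u_{2}^{(i)})$ on the two sides forces exactly the update formulas, while the part of \eqref{pagc} that is independent of $u-u^{(i)}$ collapses to $LE(V^{(i)},u^{(i)})=0$. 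The terminal condition $V^{(i)}({\bf{x}}(t_{f}),t_{f})=\phi({\bf{x}}(t_{f}))$ is carried along unchanged by both implications.

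The routine calculus above is not where the difficulty lies; the delicate point is the identification argument in the converse. One must ensure that \eqref{pss} is imposed along a sufficiently rich family of trajectories so that the coefficient matching in $(u-u^{(i)})$ is legitimate and the pair $(V^{(i)},u^{(i+1)})$ it determines is unique. This is a persistency-of-excitation condition on the exploration input, which I would record as a standing hypothesis of the algorithm, as is standard in integral reinforcement learning, rather than attempt to verify it for the specific reaction-diffusion dynamics of \eqref{1`q1}.
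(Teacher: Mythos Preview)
Your forward implication is exactly the derivation the paper carries out just before the lemma (integrating \eqref{pagc} and inserting the update relations), so that part coincides with the paper.

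For the converse, the paper takes a different and slightly more economical route. Rather than differentiating and then invoking a persistency-of-excitation hypothesis to match coefficients of $u-u^{(i)}$, the paper argues uniqueness directly: suppose $(V_{1},u'_{1},u'_{2})$ is a second solution of \eqref{pss}, subtract the two instances of \eqref{pss}, and exploit the freedom in the exploration control by making two specific choices. First, setting $u=u^{(i)}$ kills the integral term in the difference and, together with the common terminal condition $V^{(i)}({\bf{x}}(t_{f}),t_{f})=V_{1}({\bf{x}}(t_{f}),t_{f})=\phi({\bf{x}}(t_{f}))$, forces $V^{(i)}\equiv V_{1}$ for all $t$. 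Second, dividing the remaining identity by $\delta$ and letting $\delta\to0$ gives $(u_{1}^{(i+1)}-u'_{1})^{\top}\tau_{1}(u_{1}-u_{1}^{(i)})=0$ pointwise for every $u$; choosing $u-u^{(i)}=u_{1}^{(i+1)}-u'_{1}$ yields $u_{1}^{(i+1)}=u'_{1}$ (and similarly for $u_{2}$). Equivalence then follows from uniqueness plus the forward direction.

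Your approach is correct, but it requires you to record persistency of excitation as an explicit standing hypothesis, as you note. The paper's argument achieves the same identification by two concrete choices of $u$, so it avoids stating a separate richness condition; in effect, those two choices \emph{are} the minimal excitation needed.
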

\begin{proof}
We prove the Eq. \eqref{pss} has unique solution. This can be completed by contradiction. First, we suppose that there exists another solution $(V_{1},u'_{1},u'_{2})$
\begin{equation}\label{paa1}
\begin{split}
V_{1}({\bf{x}}(t-\delta),t-\delta)=&2\int_{t-\delta}^{t}[u_{1}^{'T}\tau_{1}(u_{1}
-u_{1}^{(i)})+u_{2}^{'T}\tau_{2}(u_{2}-u_{2}^{(i)})]ds\\
&+\int_{t-\delta}^{t}(A_{1}S(x,t)+A_{2}I(x,t)+u_{1}^{(i)T}\tau_{1}u_{1}^{(i)}
+u_{2}^{(i)T}\tau_{2}u_{2}^{(i)})ds+V_{1}({\bf{x}}(t),t).
\end{split}
\end{equation}
with $V_{1}({\bf{x}}(t_{f}),t_{f})=\phi({\bf{x}}(t_{f}))$. Let us subtract \eqref{paa1} from \eqref{pss}, yields
\begin{equation}\label{plsdf}
\begin{split}
&V^{(i)}({\bf{x}}(t-\delta),t-\delta)-V_{1}({\bf{x}}(t-\delta),t-\delta)\\
&=2\int_{t-\delta}^{t}(u_{1}^{(i+1)}-u'_{1})^{T}\tau_{1}(u_{1}-u_{1}^{(i)})ds+V^{(i)}({\bf{x}}(t),t)
-V_{1}({\bf{x}}(t),t).
\end{split}
\end{equation}
%It is known that the Eq. \eqref{plsdf} holds $\forall u$ and $\forall t, T$.
Thus, we select $u=u^{(i)}$, $V^{(i)}({\bf{x}}(t_{f}),t_{f})=\phi({\bf{x}}(t_{f}))$ and $V_{1}({\bf{x}}(t_{f}),t_{f})=\phi({\bf{x}}(t_{f}))$. Then we have $V^{(i)}({\bf{x}}(t),t)=V_{1}({\bf{x}}(t),t), \forall t \in [t_{0},t_{f}]$. It is enough to show that
\begin{equation}\label{wksg1}
\int_{t-\delta}^{t}(u_{1}^{(i+1)}-u'_{1})^{T}\tau_{1}(u_{1}-u_{1}^{(i)})ds=0,
\end{equation}
Dividing $T$ and taking limit for the both sides of Eq. \eqref{wksg1}, it follows that
\begin{equation}
\lim_{\delta\to 0}\frac{\int_{t-\delta}^{t}(u_{1}^{(i+1)}-u'_{1})^{T}\tau_{1}(u_{1}-u_{1}^{(i)})ds}{\delta}=0,
\end{equation}
that is $(u_{1}^{(i+1)}-u'_{1})^{\top}\tau_{1}(u_{1}-u_{1}^{(i)})=0$ for all $u$. Let $(u_{1}-u_{1}^{(i)})=(u_{1}^{(i+1)}-u'_{1})$. Then, we can get $u_{1}^{(i+1)}=u'_{1}$.\\ This completes the proof.
\end{proof}

\section{Numerical examples}\label{Section6}
Applying Milstein's method, we have the corresponding  equation of state equation \eqref{1`q1} and adjoint equation \eqref{adj} as follows:
\begin{equation*}\label{op}
\begin{cases}
\begin{split}
S_{i+1} =& S_i + \bigg((1-p(\Lambda_{t}))b(\Lambda_{t})+\alpha(\Lambda_{t})  I_i-\mu(\Lambda_{t}) S_{i}-\beta(\Lambda_{t}) S_i I_i-u_{1} S_i+D_{1}\Delta S_{i}      \bigg) \Delta t\\
&-\sigma(\Lambda_{t}) S_i I_i\sqrt{\Delta t} \zeta_i  -\frac{1}{2}  \sigma^{2}(\Lambda_{t}) S_i I_i^{2}(\zeta^{2}_i-1) \Delta t, \\
I_{i+1} =& I_i + \bigg[\beta(\Lambda_{t}) S_i I_i  +(1-e)\beta(\Lambda_{t}) V_i I_i-(\mu(\Lambda_{t})+\alpha(\Lambda_{t}) )I_i-\frac{m(\Lambda_{t}) u_{2} I_i}{1+\eta(\Lambda_{t}) I_i}+D_{2}\Delta I_{i}\bigg]\Delta t
\\&+\sigma(\Lambda_{t}) S_{i} I_{i} \sqrt{\Delta t} \zeta_i+\frac{1}{2}\sigma^{2}(\Lambda_{t}) S_i^{2} I_i(\zeta^{2}-1)\Delta t+(1-e)\sigma(\Lambda_{t}) V_i I_i \sqrt{\Delta t} \zeta_i +\frac{1}{2}\sigma^{2}(\Lambda_{t})V_i^{2} I_i (\zeta^{2}-1)\Delta t, \\
V_{i+1} =& V_i + \bigg[p(\Lambda_{t}) b(\Lambda_{t})-\mu(\Lambda_{t}) V_{i}-(1-e)\beta(\Lambda_{t}) V_i I_i+u_{1} S_i+\frac{m(\Lambda_{t}) u_{2} I_i}{1+\eta(\Lambda_{t}) I_i}+D_{3}\Delta V_{i} \bigg]\Delta t
\\& -(1-e)\sigma(\Lambda_{t}) V_i I_i \sqrt{\Delta t} \zeta_i-\frac{1}{2}(1-e)^{2}\sigma^{2}(\Lambda_{t})V_{i} I_{i}^{2}(\zeta_i^2-1)\Delta t, \\
\end{split}
\end{cases}
\end{equation*}
\begin{equation*}\label{yu}
\begin{cases}
\begin{split}
p_{1_{i}} =& p_{1_{i+1}}  - \bigg[\bigg((\mu(\Lambda_{t})+u_1)S_{i+1}+\beta(\Lambda_{t}) I_{i+1}   \bigg) p_{1_{i+1}}+D_{1}\Delta p_{1_{i+1}}+\beta(\Lambda_{t}) I  _{i+1} p_{2_{i+1}}
\\&+u_1  p_{3_{i+1}}-\sigma(\Lambda_{t}) I  _{i+1}q_{1_{i+1}}+\sigma(\Lambda_{t}) I _{i+1} q_{2_{i+1}}\bigg]\Delta t  - q_{1_{i+1}}\sqrt{\Delta t} \zeta_{i+1} - \frac{q^{2}_{1_{i+1}}}{2}(\zeta_{i+1}^2-1)\Delta t,\\
p_{2_{i}} =& p_{2_{i+1}} -\bigg[(\alpha(\Lambda_{t})-\beta(\Lambda_{t}) S_{i+1})p_{1_{i+1}}+\bigg(\beta(\Lambda_{t}) S_{i+1}+  (1-e)\beta (\Lambda_{t})V_{i+1}-(\mu(\Lambda_{t})
\\&+\alpha(\Lambda_{t}))-\frac{m(\Lambda_{t}) u_{2}(x,t)}{(1+\eta(\Lambda_{t}) I)^{2}}\bigg)p_{2_{i+1}}+D_{2}\Delta p_{2_{i+1}}-\bigg((1-e)\beta(\Lambda_{t}) V_{i+1}-\frac{m(\Lambda_{t}) u_{2}}{(1+\eta (\Lambda_{t})I)^{2}}\bigg) p_{3_{i+1}}
\\&-\sigma(\Lambda_{t}) S_{i+1} q_{1 _{i+1}}+(\sigma(\Lambda_{t}) S_{i+1}+(1-e)\sigma(\Lambda_{t}) V_{i+1})q_{2_{i+1}}-(1-e)\sigma(\Lambda_{t}) V  _{i+1}  q_{3_{i+1}}\bigg] \Delta t
\\&  - q_{2_{i+1}}\sqrt{\Delta t} \zeta_{i+1} - \frac{q^{2}_{2_{i+1}}}{2}(\zeta_{i+1}^2-1)\Delta t,\\
p_{3_{i}} =& p_{3_{i+1}} + \bigg[(1-e)\beta(\Lambda_{t}) I  _{i+1} p_{2_{i+1}}  -\bigg(\mu(\Lambda_{t}) +(1-e)\beta(\Lambda_{t})  I _{i+1}\bigg)p_{3_{i+1}}+D_{3}\Delta p_{3_{i+1}}
\\&+(1-e)\sigma(\Lambda_{t})  I  _{i+1}q_{2_{i+1}}-(1-e)\sigma (\Lambda_{t}) I  _{i+1}q_{3_{i+1}}\bigg]\Delta t,\\
\end{split}
\end{cases}
\end{equation*}
where $\zeta_i^2 ~(i=1,2,...)$  are not interdependent Gaussian random variables $N(0,1)$.
At below, we will present numerical simulation of the SIV model, let us assume that the Markov chain $\xi(t)$ is on the state space $\mathbb{S}=\{1,2\}$ with the transition probability $\alpha=\left(
\begin{array}{ccc}
-5.5  & 5.5   \\
8  & -8  \\
%\frac{7}{8}  & \frac{1}{8} & 0  \\
\end{array}
\right)
$ and the following setting:

When $\xi(t)=1$,\\
$p(1)=0.5year^{-1},~b(1)=4.0year^{-1},~\beta(1)=0.02year^{-1},~\mu(1)=0.04year^{-1},~\eta(1)=1.03year^{-1},~m(1)=0.01year^{-1},~\alpha(1)=0.001year^{-1},~\alpha(1)=0.8year^{-1},~\sigma(1)=0.035year^{-1};$

When $\xi(t)=2$,\\
$p(2)=0.6year^{-1},~b(2)=5.0year^{-1},~\beta(2)=0.04year^{-1},~\mu(2)=0.05year^{-1},~\eta(2)=1.05year^{-1},~m(2)=0.02year^{-1},~\alpha(2)=0.002year^{-1},~\alpha(2)=0.9year^{-1},~\sigma(2)=0.036year^{-1}.$

%When $\xi(t)=3$,\\
%$p(3)=0.7,~b(3)=5.5,~\beta(3)=0.05,~\mu(3)=0.06,~\eta(3)=1.10,~m(3)=0.03,~\alpha(3)=0.003,~\alpha(3)=1.0,~\sigma(3)=0.038.$
\subsection{The invariant measure}
\begin{figure}[H]\centering
\begin{subfigure}[]{
			\includegraphics[scale=0.23]{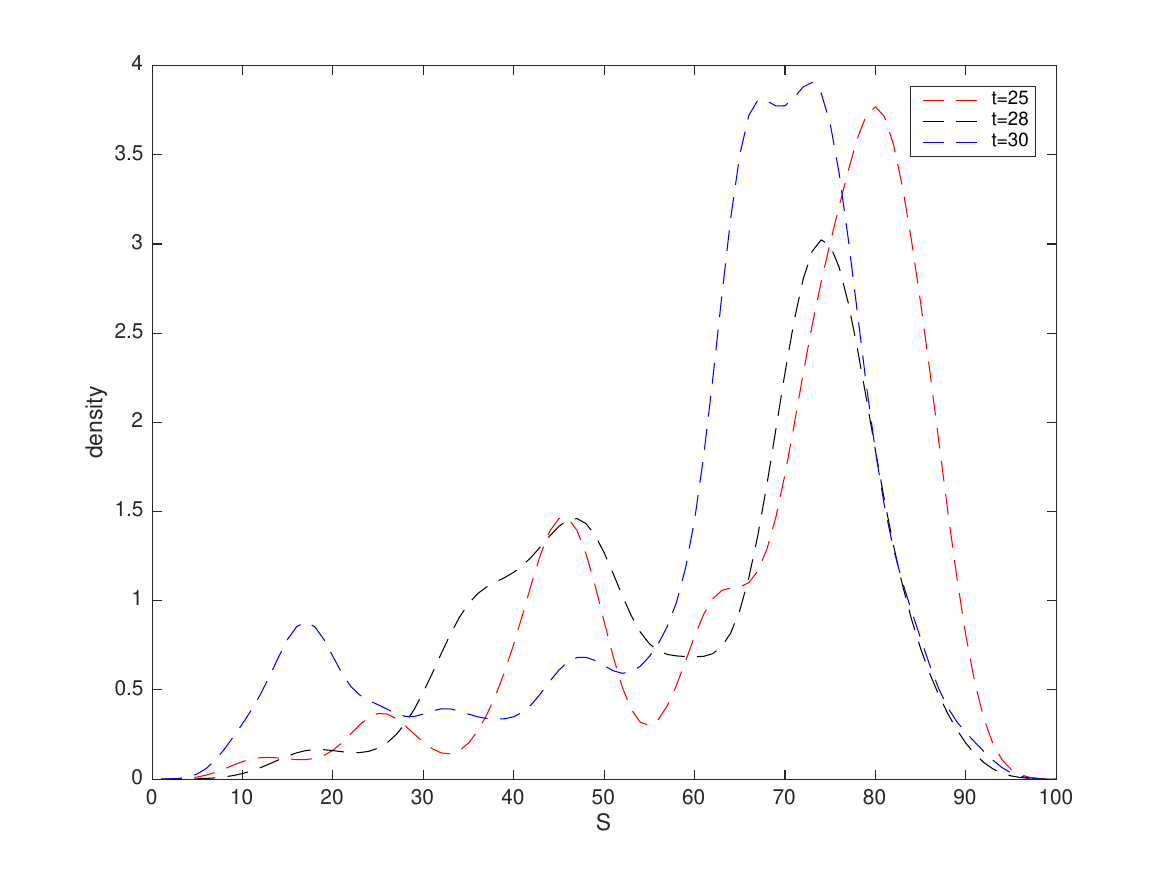}}\end{subfigure}
\begin{subfigure}[]{
			\includegraphics[scale=0.23]{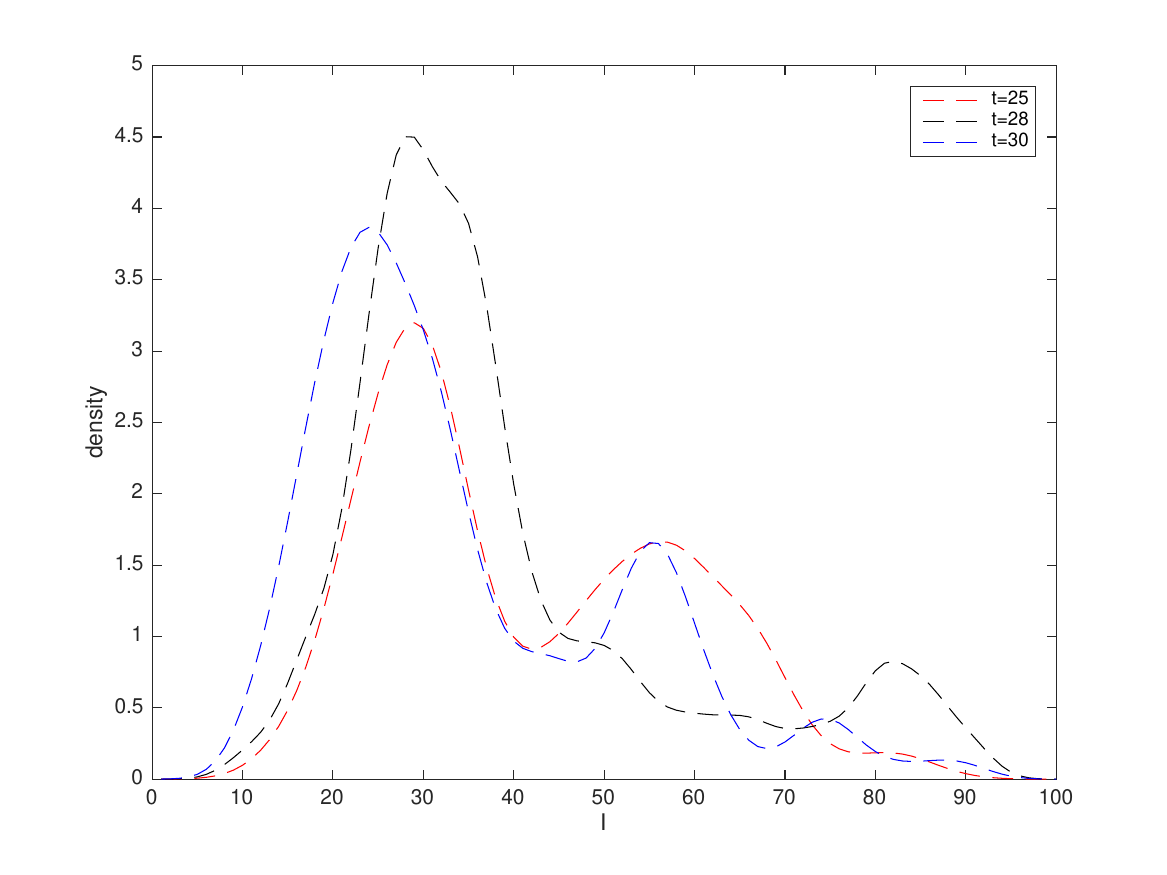}}\end{subfigure}
\begin{subfigure}[]{
			\includegraphics[scale=0.23]{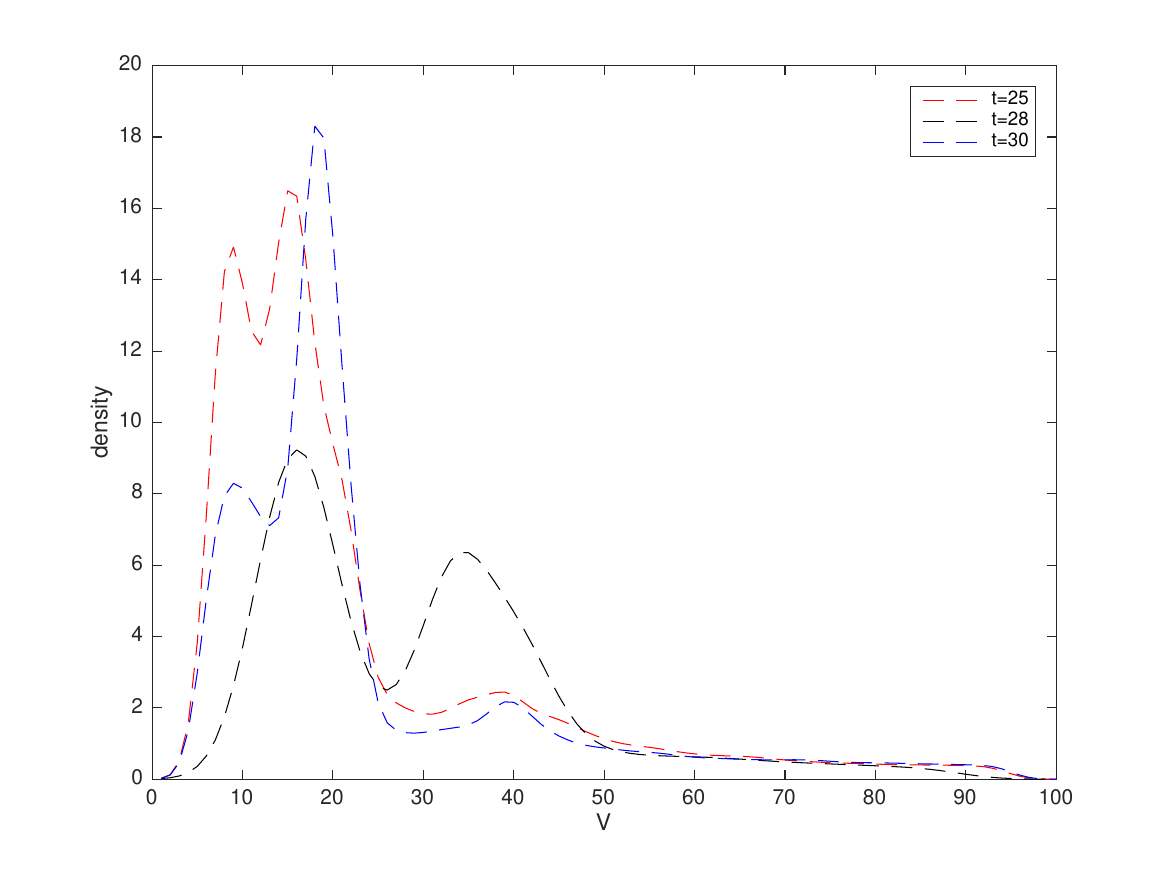}}\end{subfigure}
	\caption{Density plots (a)-(c) based on 10 000 stochastic simulations for group susceptible, infected and vaccinated at time t = 25, 28 and 30. Here we choose $\beta=0.2,~\sigma=0.05$. The simulations confirm the existence of the unique ergodic invariant measure for system \eqref{adj}.}\label{cc}
\end{figure}
In \autoref{cc}, we simulate the density kernels of solutions \eqref{1`q1} with three groups namely $(S,I,V)$. In the simulation, the density kernels are based on 10 000 sample paths. Comparing these density kernels, we can see that the density plot of each group at different time $t$ for $t = 160, 180, 200$ stay almost the same. Therefore, we can conclude that the simulations strongly indicate the existence of the unique ergodic invariant measure for the system \eqref{1`q1}. In \autoref{nn}, we simulate the density kernels of solutions \eqref{1`q1} with three groups namely $ (S,I,V)$ with and without control.

\begin{figure}[H]\centering
\begin{subfigure}[]{
			\includegraphics[scale=0.23]{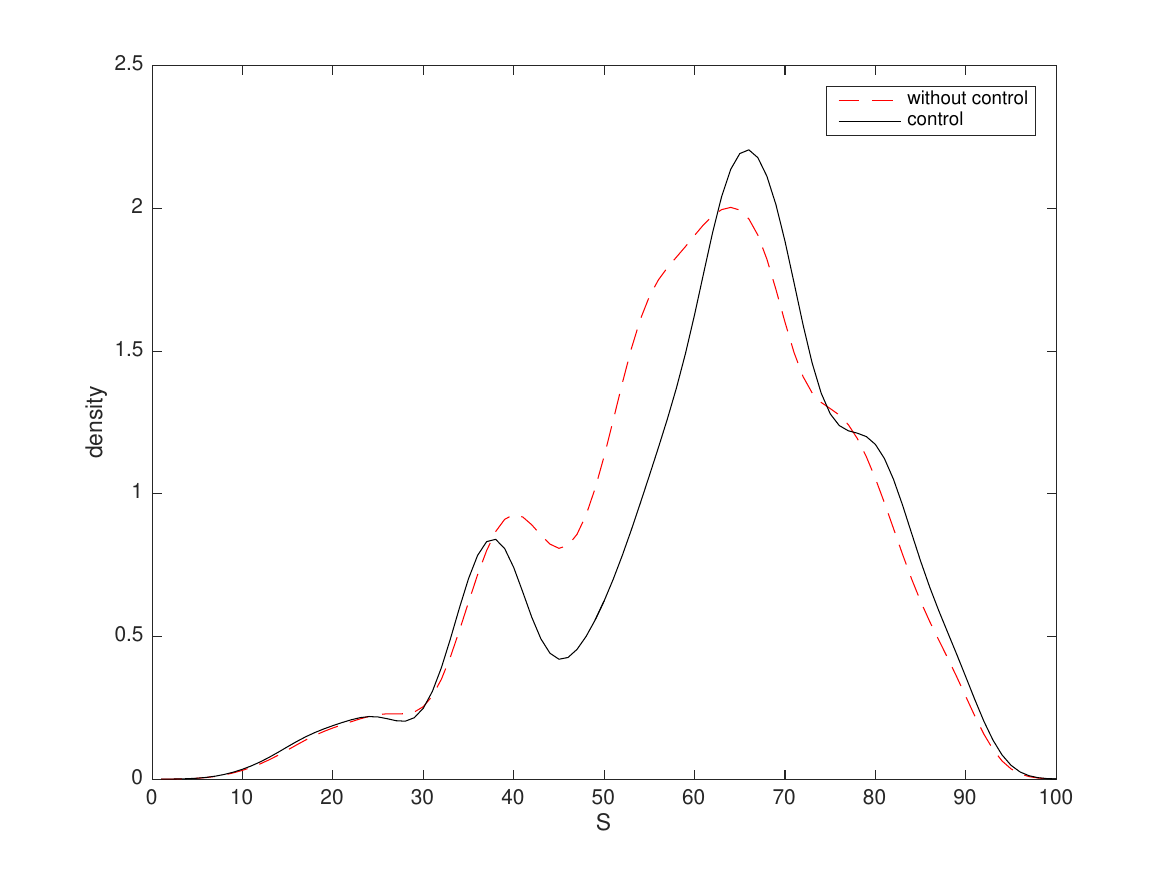}}\end{subfigure}
\begin{subfigure}[]{
			\includegraphics[scale=0.23]{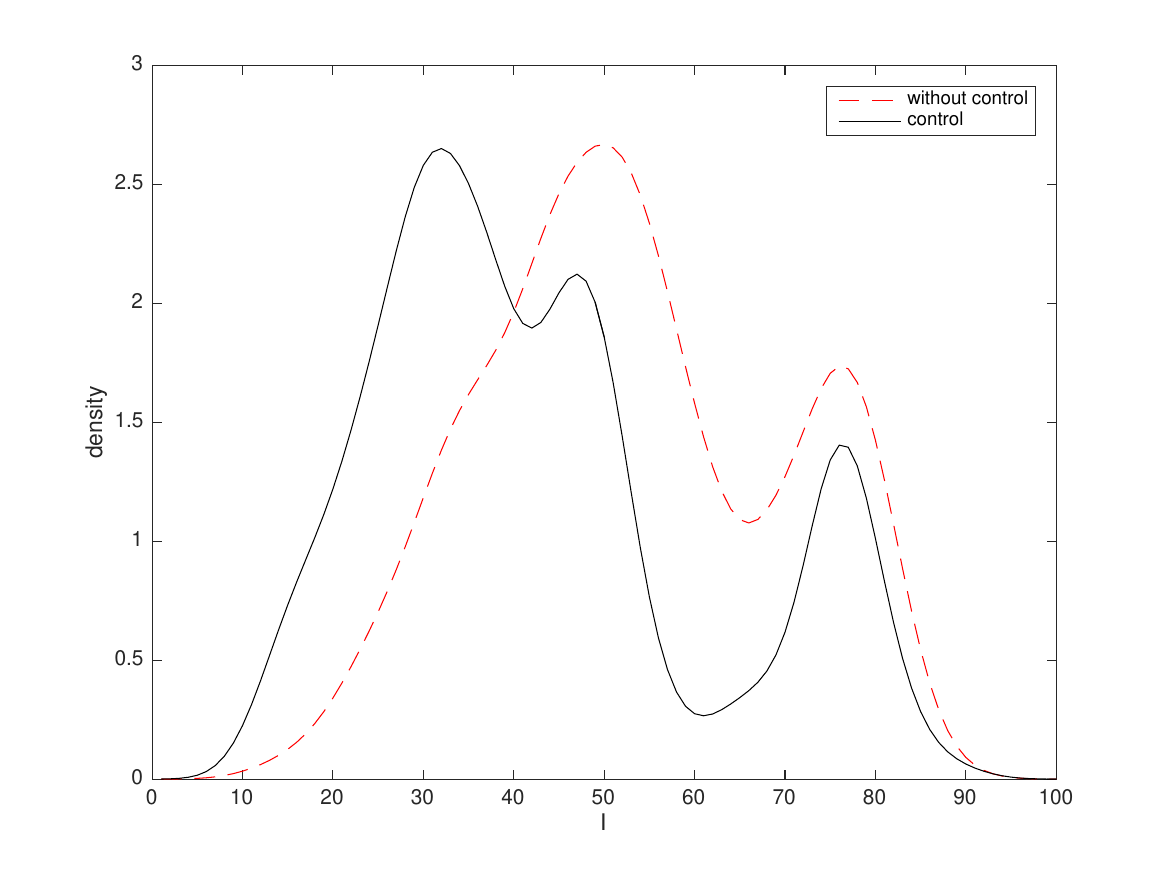}}\end{subfigure}
\begin{subfigure}[]{
			\includegraphics[scale=0.23]{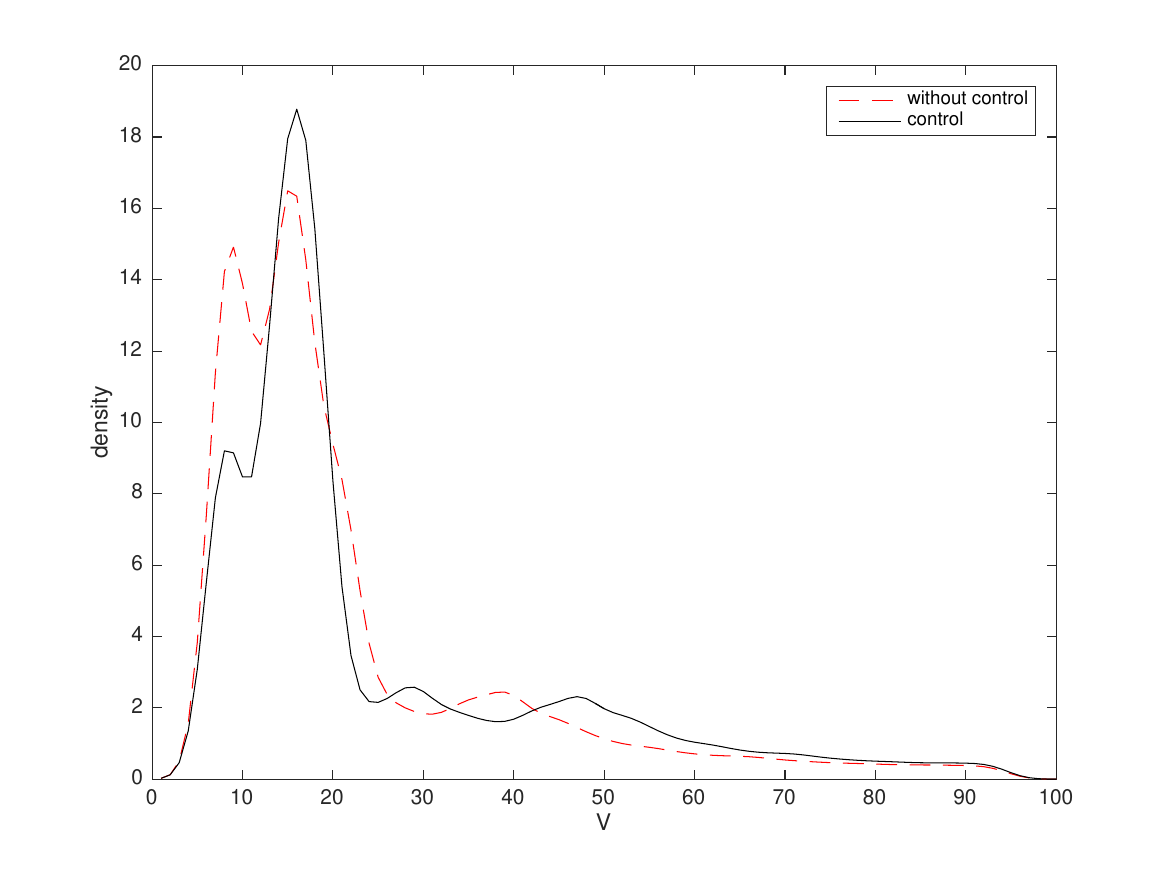}}\end{subfigure}
	\caption{Density plots (a)-(c) based on 10 000 stochastic simulations for group susceptible, infected and vaccinated with and without control. Here we choose $\beta=0.2,~\sigma=0.05$. The simulations confirm the existence of the unique ergodic invariant measure for system \eqref{adj}.}\label{fig:wsr}
\label{nn}
\end{figure}

\subsection{Finite horizon optimal control}
In \autoref{fig:wsrs}, the three solution curves represent susceptible and infected  individuals at different intervals of time with Markov switching. The application of vaccination and treatment control give rise to the number of individuals vaccinated. In \autoref{vxad}, the three solution curves represent adjoint equations corresponding to the state equations \eqref{1`q1} at different intervals of time with Markov switching. \autoref{fig:wsrt} gives the optimal control $u_{1}$ and $u_{2}$, and also give the value function of the stochastic SIV epidemic system with Markov switching.

\begin{figure}[h]\centering
\begin{subfigure}[]{
			\includegraphics[scale=0.3]{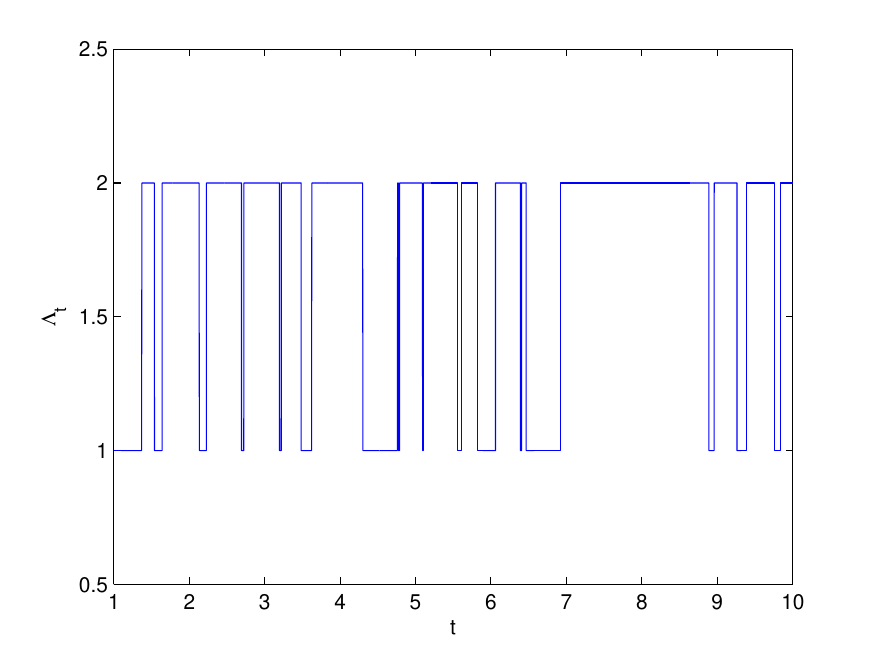}}\end{subfigure}
\begin{subfigure}[]{
			\includegraphics[scale=0.32]{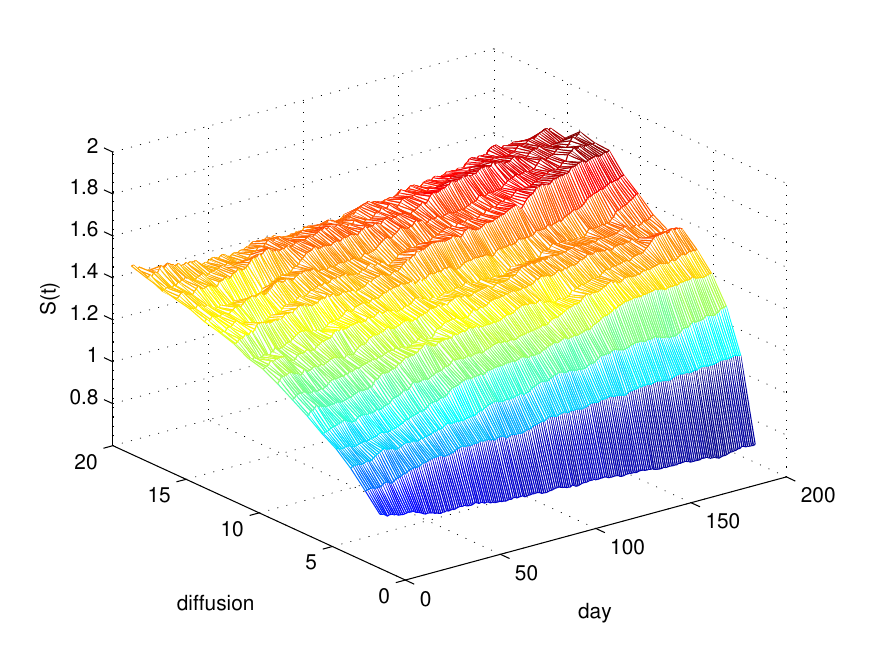}}\end{subfigure}
\begin{subfigure}[]{
			\includegraphics[scale=0.32]{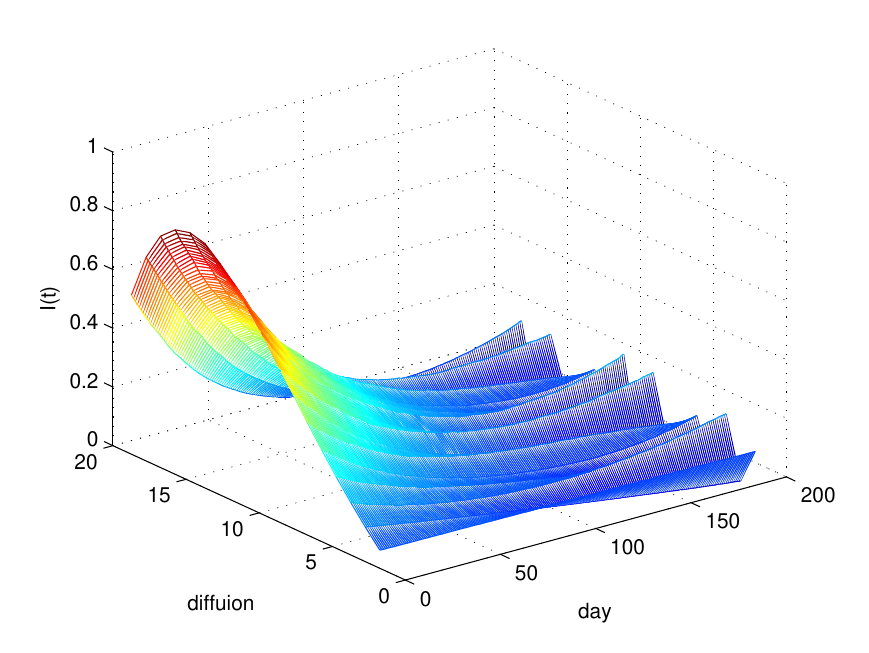}}\end{subfigure}
	\caption{The path of $S(x,t), I(x,t)$  for the stochastic SIV model \eqref{1`q1} with initial $S_0 = 0.6, I_0 = 0.1,V_0 = 1.0$.}\label{fig:wsr}
\label{fig:wsrs}
\end{figure}

\begin{figure}[h]\centering
\begin{subfigure}[]{
			\includegraphics[scale=0.23]{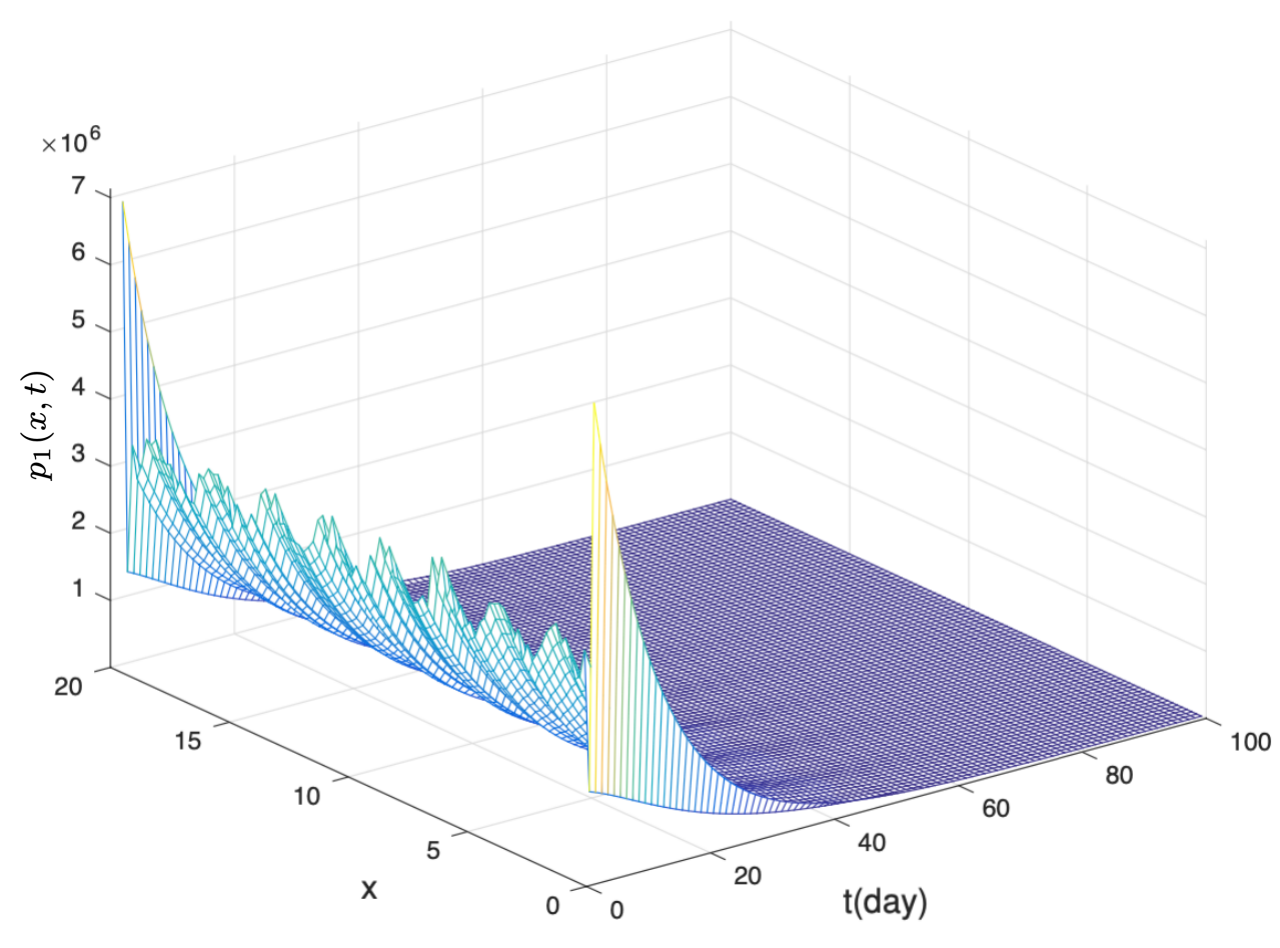}}\end{subfigure}
\begin{subfigure}[]{
			\includegraphics[scale=0.2]{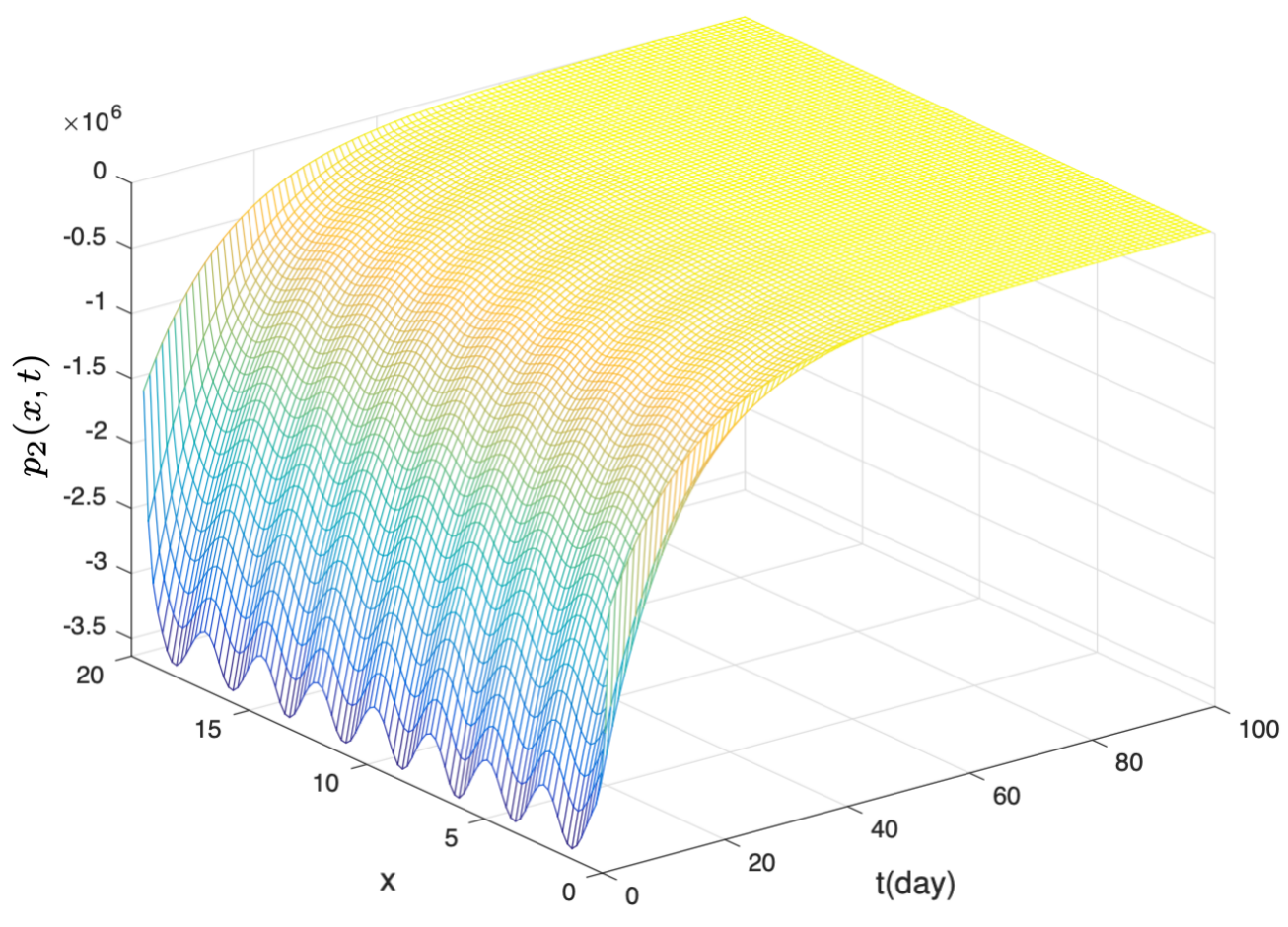}}\end{subfigure}
\begin{subfigure}[]{
			\includegraphics[scale=0.23]{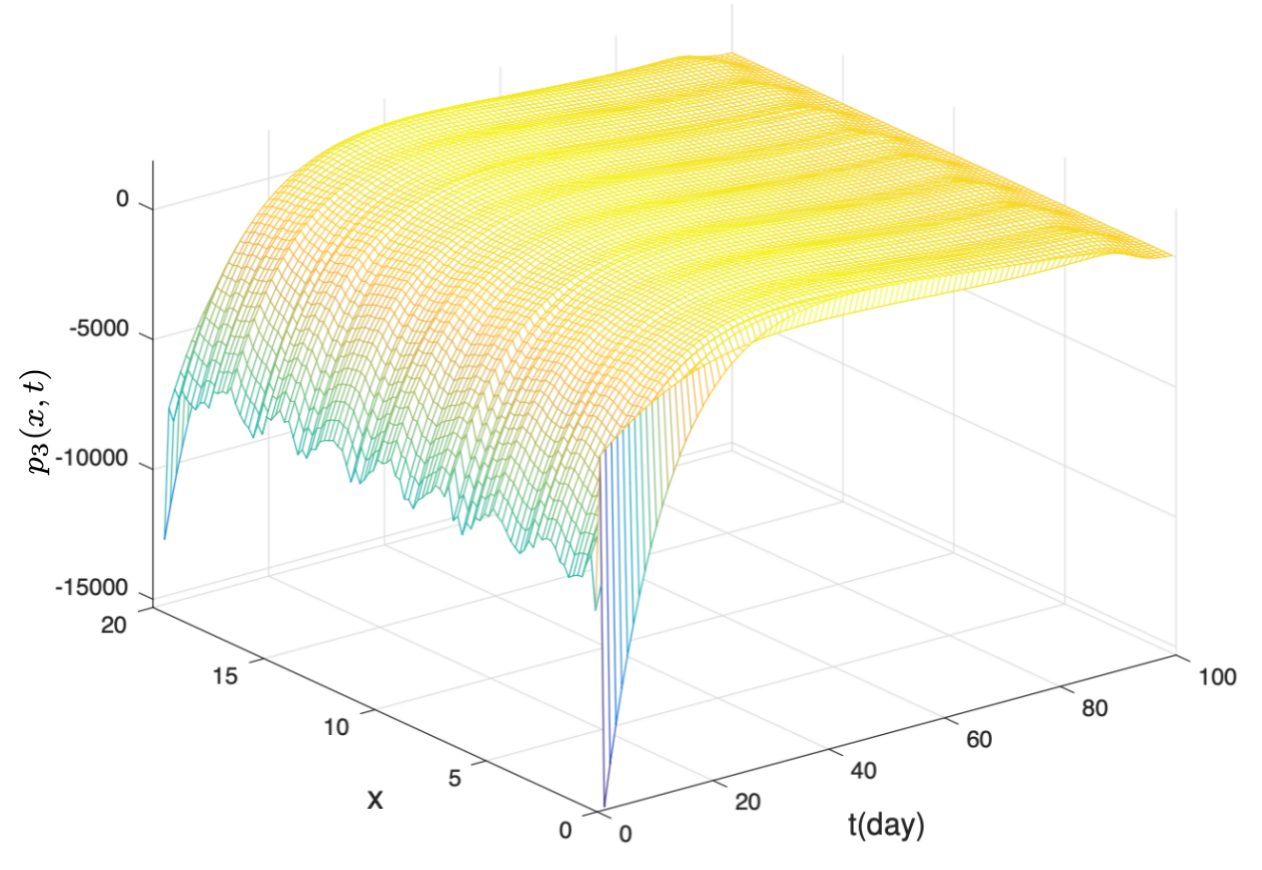}}\end{subfigure}
	\caption{The path of $p_{1}(x,t), p_{2}(x,t)$ and $p_{3}(x,t)$  of the adjoint equations \eqref{adj}.}\label{vxad}
\end{figure}

\begin{figure}[H]\centering
\begin{subfigure}[]{
			\includegraphics[scale=0.3]{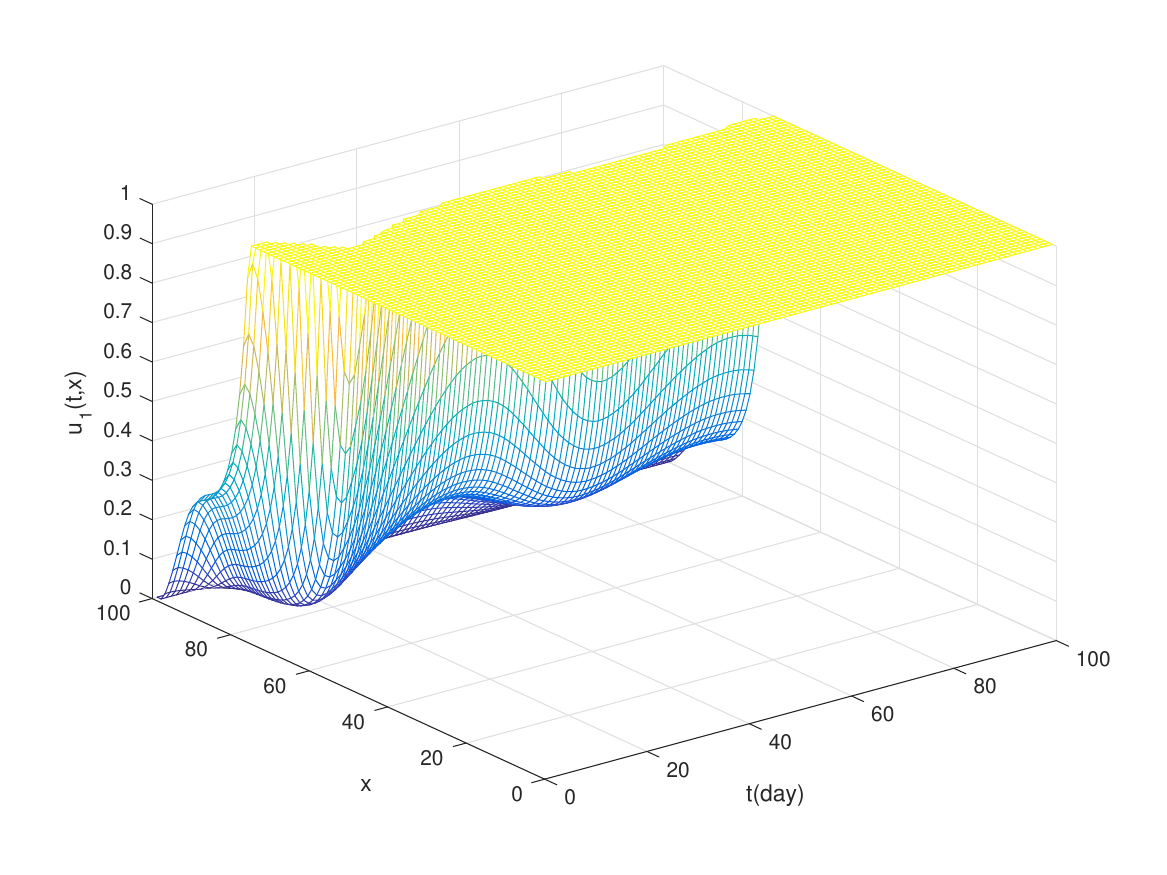}}\end{subfigure}
\begin{subfigure}[]{
			\includegraphics[scale=0.27]{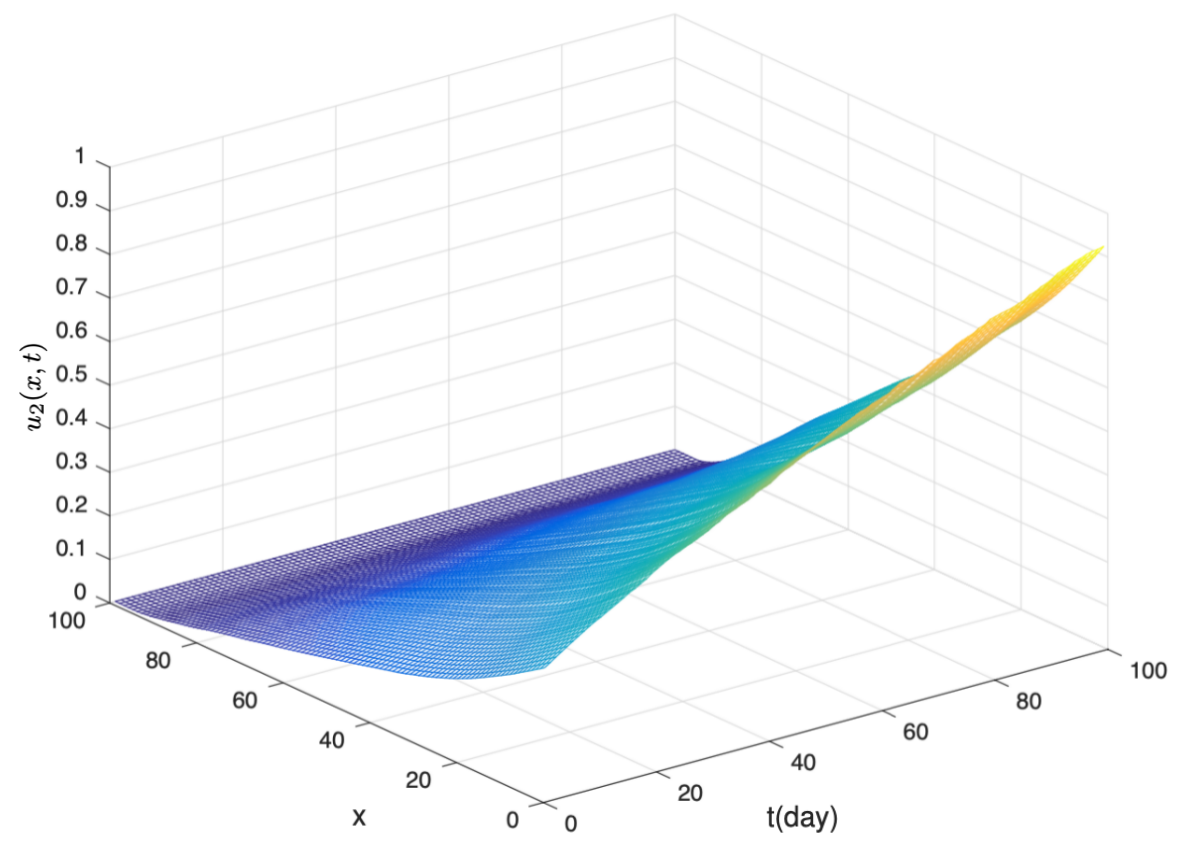}}\end{subfigure}
%\begin{subfigure}[]{
%			\includegraphics[scale=0.4]{u1111.eps}}\end{subfigure}
	\caption{The path of $u_{1}(x,t), u_{2}(x,t)$.}\label{fig:wsr}
\label{fig:wsrt}
\end{figure}

\section{Conclusion}\label{Section7}

In this paper, we establish a new spatial diffusion stochastic SIV epidemic  model with Markov chain in a stochastic environment. Based on the Perron-Frobenius theorem,  we investigate the criterion on the existence and uniqueness of invariant measure for the exact solution. And we also derive a set of necessary conditions for near-optimal controls. Based on Lyapunov equation, an online off-strategy IRL algorithm for optimal control of SIV epidemic systems is derived. The existence of an invariant measure for a stochastic SIV epidemic model with Markov switching has been studied. Finally, we present a numerical simulation of the optimal control problem, which shows that different control strategies are needed at different times to minimize the objective function. The uniqueness of invariant measure of stochastic SIV epidemic model with Markov switching is verified by numerical examples.

Nevertheless, some mathematical details need to be carefully worked out. The results, in turn, will be of interest to people working on real data. On could work with stochastic partial differential equation models driven by L\'evy noise.

\section*{Data Availability Statement}
The manuscript has no associated data.

%\section*{Declarations}

\section*{Conflict of interest}
The authors declare that they have no conflict of interest.

%\section*{Acknowledgments}
%The research is supported by the Natural Science Foundation of China (Grant numbers 12161068).
\section*{References}

\end{document}